\documentclass[12pt,a4paper]{article}
\usepackage[margin=1.1cm]{geometry}
\usepackage{amsmath,amsthm,amsfonts,amssymb,amscd,cite,graphicx}
\setlength{\arrayrulewidth}{0.2pt}
\usepackage{latexsym}
\usepackage{enumitem}

\usepackage[usenames, dvipsnames]{color}
\definecolor{mygray}{gray}{0.6}

\usepackage{titlesec}
\titleformat{\section}
{\normalfont\fontsize{12}{15}\bfseries}{\thesection}{1em.}{}

\newtheorem{corollary}{Corollary}[section]
\newtheorem{lemma}{Lemma}[section]

\newtheorem{theorem}{Theorem}[section]

\allowdisplaybreaks[4]

\let\oldbibliography\thebibliography
\renewcommand{\thebibliography}[1]{%
  \oldbibliography{#1}%
  \setlength{\itemsep}{-2pt}%
}

\baselineskip=1.20in

\begin{document}

\baselineskip=0.20in

\makebox[\textwidth]{%
\hglue-15pt
\begin{minipage}{0.6cm}	
\vskip9pt
\end{minipage} \vspace{-\parskip}
\begin{minipage}[t]{6cm}

\end{minipage}
\hfill
\begin{minipage}[t]{6.5cm}
\end{minipage}}
\vskip36pt

\noindent
{\large \bf Enumeration of consecutive patterns in flattened Catalan words}\\

\noindent
 Mark Shattuck\\

\noindent
\footnotesize {\it Department of Mathematics, University of Tennessee,
37996 Knoxville, TN, USA\\
Email: mshattuc@utk.edu}\\

\noindent
(\footnotesize Received: Day Month 201X. Received in revised form: Day Month 201X. Accepted: Day Month 201X. Published online: Day Month 201X.)\\

\setcounter{page}{1} \thispagestyle{empty}

\baselineskip=0.20in

\normalsize

\begin{abstract}
A Catalan word $w$ is said to be \emph{flattened} if the subsequence of $w$ obtained by taking the first letter of each weakly increasing run is nondecreasing.  Let $\mathcal{F}_n$ denote the set of flattened Catalan words of length $n$, which has cardinality $\frac{3^{n-1}+1}{2}$ for all $n \geq 1$.  In this paper, we consider the distribution of several consecutive patterns on $\mathcal{F}_n$.  Indeed, we find explicit formulas for the generating functions of the joint distribution on $\mathcal{F}_n$ of several trios of patterns, along with an auxiliary parameter.  As special cases of these formulas, we obtain the generating function  for the distribution of all consecutive patterns of length two or three. The following equivalences with regard to being identically distributed on $\mathcal{F}_n$ arise when comparing the various generating functions and may be explained bijectively: $112\approx122$ and $211\approx221\approx231$.  In addition, explicit expressions are found for the total number of occurrences on $\mathcal{F}_n$ of each pattern of length two or three as well as for the number of avoiders of each pattern. These results can be obtained as special cases of our more general formulas for the generating functions, but may be explained combinatorially as well, the arguments of which are featured herein.
\end{abstract}

\section{Introduction}

A \emph{Catalan} word is an integral sequence $\pi=\pi_1\cdots \pi_n$ such that $1 \leq \pi_{i+1} \leq \pi_i+1$ for $1 \leq i \leq n-1$ with $w_1=1$.  Let $\mathcal{C}_n$ denote the set of Catalan words of length $n$.  It is well-known that $|\mathcal{C}_n|=C_n$ for all $n$, where $C_n=\frac{1}{n+1}\binom{2n}{n}$ denotes the $n$-th Catalan number; see, e.g., \cite[Exercise~80]{Stan2}. Catalan words arise in the context of various combinatorial structures such as rooted binary trees and Dyck paths (see, e.g., \cite{Mak} and \cite{Shat}, respectively) and have also been studied in connection with the exhaustive generation of Gray codes \cite{MVaj} for growth-restricted words. The enumeration of Catalan words with regard to certain restrictions was initiated by Baril et al.~\cite{BKV2}, who studied the distribution of descents on members of $\mathcal{C}_n$ avoiding a single classical pattern of length two or three. These avoidance results on $\mathcal{C}_n$ were extended to pairs of such patterns in \cite{BKV1}, and later work in this direction included consecutive patterns \cite{RamR}, pairs of partial order relations \cite{BRam} and certain statistics \cite{CMR,MRT} defined in terms of the bargraph representation of a Catalan word.

We now recall the concept of a \emph{flattened} Catalan word introduced in \cite{BHR}.  To do so, first recall that by a (weakly) \emph{increasing run} within a word $w=w_1\cdots w_n$, it is meant a  maximal nondecreasing sequence of consecutive letters of $w$.  That is, an increasing run corresponds to a pair of indices $1 \leq i \leq j \leq n$ such that $w_i\leq w_{i+1}\leq\cdots \leq w_j$ wherein the following two conditions hold: (a) $i=1$ or $i>1$ with $w_{i-1}>w_i$, and (b) $j=n$ or $j<n$ with $w_{j+1}<w_j$.  For example, if $w$ is a word of length twelve given by $w=347655895244$,  then $w$ has five increasing runs given by $347, 6, 5589, 5, 244$.

A \emph{flattened} Catalan word $w$ is then defined as a member of $\mathcal{C}_n$  in which the subsequence of $w$ obtained by taking the first letter in each of its increasing blocks is nondecreasing.  For example, the Catalan word $\pi=123323234334 \in \mathcal{C}_{12}$ is flattened since the letters starting its increasing runs from left to right are $1,2,2,3$, whereas $\pi=121232341223$ is not since its increasing runs start $1,1,2,1$.  Let $\mathcal{F}_n$ for $n\geq 1$ denote the subset of $\mathcal{C}_n$ consisting of its flattened members.  It was shown in \cite{BHR} that $|\mathcal{F}_n|=\frac{3^{n-1}+1}{2}$ for $n \geq 1$, with this sequence corresponding to \cite[A007051]{Sloane}. For example, we have $|\mathcal{F}_6|=122$, the ten excluded members of $\mathcal{C}_6$ comprising
$$\mathcal{C}_6-\mathcal{F}_6=\{112321,122321,123211,123212,123221,123231,123321,123421,123431,123432\},$$
with $|\mathcal{C}_6|=C_6=132$. Note that the increasing run lengths of any member of $\mathcal{F}_n$ are all at least two, except for possibly the last.  In particular, no member of $\mathcal{F}_n$ can have two consecutive descents.  Let $\mathcal{F}=\cup_{n\geq1}\mathcal{F}_n$ denote the set of all Catalan words of positive length, on which we will consider several joint distributions of statistics below.

Let $\tau=\tau_1\cdots\tau_m$ be a positive integral sequence whose distinct entries comprise the set $[\ell]=\{1,\ldots,\ell\}$ for some $\ell \geq 1$.  Then the sequence $\pi=\pi_1\cdots\pi_n$ is said to \emph{contain} $\tau$ as a \emph{consecutive pattern} if there exists a string of consecutive letters of $\pi$ that is order-isomorphic to $\tau$.  Consecutive patterns are also referred to as \emph{subword patterns}, or simply as \emph{subwords}.  Thus, $\pi$ contains $\tau$ as a subword when there exists an index $i \in [n-m+1]$ such that $\pi_i\pi_{i+1}\cdots\pi_{i+m-1}$ is isomorphic to $\tau$ and \emph{avoids} $\tau$ as a subword if no such $i$ exists.  For example, the sequence $\pi=4135753346$ contains three occurrences of 123 as a subword,  as witnessed by the strings 135, 357 and 346, and a single occurrence of the 121 subword corresponding to the string 575. Note that the occurrences of a subword pattern need not be mutually disjoint.  The sequence $\pi$ avoids 132 as a subword, though it contains subsequences that are isomorphic to the pattern.  The problem of counting occurrences of subwords is one that has been undertaken on a variety of finite discrete structures, represented sequentially, such as $k$-ary words \cite{BM}, compositions \cite{MSir}, set partitions \cite{MSY}, non-crossing partitions \cite{MSh} and Catalan words \cite{Shat}.

Here, we will address the problem of counting the members of $\mathcal{F}_n$ according to the number of occurrences of subword patterns of length two or three.  This extends recent work of Baril et al.~\cite{BHR} enumerating members of $\mathcal{F}_n$ according to certain statistics involving the relative sizes of adjacent entries, such as peaks, valleys and runs of ascents/descents, where the corresponding generating function formulas are found as are the respective totals on $\mathcal{F}_n$.  Note that there are thirteen subwords of length two or three in all and we achieve our count of each such subword pattern by finding five joint distributions on $\mathcal{F}$ involving three patterns each.

The problem of enumerating flattened discrete structures was first considered by Callan \cite{Callan}, who counted flattened partitions of $[n]$ avoiding a single classical pattern of length three.  A flattened partition is obtained by removing each pair of parentheses enclosing its blocks written in standard form and considering the resulting permutation expressed in one-line notation.  This idea had actually been applied much earlier by Carlitz \cite{Car} to the cycles of a permutation expressed in standard cycle form in defining a certain kind of inversion statistic on $\mathcal{S}_n$.   Other flattened structures that have subsequently been enumerated with respect to various restrictions include permutations \cite{MSW}, involutions \cite{MSh0}, Stirling permutations \cite{BEF} and parking functions \cite{EHM}. In \cite{NRB}, the set of distinct permutations that arise as flattened partitions of $[n]$ is considered, and recurrences along with the exponential generating function formula are deduced for the distribution of the statistic tracking the number of increasing runs.  Finally, statistics comparable to the one defined by Carlitz on permutations have been introduced on derangements \cite{GR} and set partitions \cite{FH} using different flattened versions of these structures wherein orderings other than the standard ones are applied to the cycles or blocks, respectively.

The organization of this paper is as follows. In the second section, we derive a formula for the joint distribution of the ascent, descent and level statistics on $\mathcal{F}_n$, together with the length and an auxiliary parameter to be described below that was useful in the derivation.  We then consider the comparable distribution involving the three subwords of length three that end in a level, namely, 122, 211 and 111.  For both distributions, we determine an explicit formula for the total number of occurrences of each of the three patterns on $\mathcal{F}_n$ for $n \geq 1$ as well as a formula  enumerating the number of avoiders in $\mathcal{F}_n$.  We provide a direct counting argument for each of these expressions, which may also be obtained as special cases of the general distributions on $\mathcal{F}$ involving all of the parameters. In the third section, a similar treatment is given for the analogous joint distributions on $\mathcal{F}$ involving the following three trios of subword patterns: $112/121/221$, $123/231/221$ and $112/212/312$.  The following equivalences in distribution on $\mathcal{F}_n$ for all $n \geq 1$ arose by comparing special cases of the general formulas: $112\approx122$ and $211\approx221\approx231$.  These equivalences may subsequently be explained bijectively.

To find the generating function $F(x,y)$ for the joint distribution of the three subword patterns $\alpha$, $\beta$ and $\gamma$ on $\mathcal{F}_n$ for all $n \geq 1$, we make use of various arrays involving the length and a statistic tracking the number of distinct letters within the terminal increasing run of a Catalan word (here, $x$ and $y$ mark the length of a member of $\mathcal{F}$ and this auxiliary statistic value, respectively).  This will lead to functional equations relating $F(x,y)$ to the $y$-partial derivative of $F(x,y)$ evaluated at $y=1$ and involving three extra parameters (one for each of the subwords $\alpha$, $\beta$ and $\gamma$).  We introduce a technique for solving the functional equations that arise in this manner, which may be applicable to a broader class of equations.  Given a subword $\tau$, let $F_\tau(x;q)$ denote the generating function for the distribution of $\tau$ on $\mathcal{F}_n$ for $n \geq 1$; that is,
$$F_\tau(x;q)=\sum_{n\geq1}\left(\sum_{\pi\in\mathcal{F}_n}q^{\#\tau(\pi)}\right)x^n,$$
where $\#\tau(\pi)$ denotes the number of occurrences of $\tau$ in $\pi$. Let $\text{tot}_n(\tau)$ for $n\geq 1$ denote the total number of occurrences of $\tau$ in all the members of $\mathcal{F}_{n}$. As special cases of our main results (see Theorems \ref{Axyth}, \ref{Bxyth}, \ref{Cxyth}, \ref{Dxyth} and \ref{Exyth} below) involving joint distributions of three patterns, we obtain explicit formulas for $F_\tau(x;q)$ and $\text{tot}_n(\tau)$ where $n\geq 2$ for all $\tau$ of length two or three, which are listed in Table \ref{tab1}.  The final column in Table \ref{tab1} gives the OEIS reference when it occurs for the distinct sequences occurring amongst $\text{tot}_n(\tau)$ for the various patterns $\tau$.

\begin{table}[htp]
{\small\begin{center}
        \begin{tabular}{|l|l|l|l|l|}\hline
            Class & $\tau$ & $F_{\tau}(x;q)$ & $\text{tot}_n(\tau)$ & OEIS  \\\hline\hline\hline
            &&&&\\[-8pt]
            1 & 11 & $\frac{x(1-(1+q)x)}{1-2(1+q)x+q(2+q)x^2}$ & $\frac{(n-1)(3^{n-2}+1)}{2}$  & A082133 \\\hline
            &&&&\\[-8pt]
            2 & 12  & $\frac{x(1-(1+q)x)}{1-2(1+q)x+(1+q+q^2)x^2}$ & $\frac{(n-1)(3^{n-1}+1)}{4}$ &\\\hline
            3 & 21  & $\frac{x(1-2x)}{1-4x+(4-q)x^2}$ & $\frac{(n-1)(3^{n-2}-1)}{4}$ & A261064\\\hline
            &&&&\\[-8pt]
             4& 111  & $\frac{x(1+(1-q)x)(1-(1+q)x-(1-q)x^2)}{1-2(1+q)x-(2-4q-q^2)x^2+2q(1-q)x^3}$ & $\text{tot}_{n-1}(11)$ & \\\hline
            5 & 112&&&\\
              &122 & $\frac{x(1-2x+(1-q)x^2)}{1-4x+(5-2q)x^2-3(1-q)x^3+(1-q)^2x^4}$ & $\text{tot}_{n-1}(12)$&\\\hline
              6 & 121  & $\frac{x(1-2x)}{1-4x+(4-q)x^2-2(1-q)x^3}$ & $\frac{(n+1)3^{n-3}+n-3}{4}$ & \\\hline
            7 & 123  & $\frac{x(1+(1-q)x)(1-(1+q)x-(1-q)x^2)}{1-2(1+q)x-(2-4q-q^2)x^2+2(1-q^2)x^3+(1-q)^2x^4}$ &$(n-2)3^{n-3}$ & A027471\\\hline
             8 & 211&&&\\
               &221 & && \\
             &231 & $\frac{x(1-2x)}{1-4x+3x^2+(1-q)x^3}$ &$\text{tot}_{n-1}(21)$ & \\\hline
             &&&&\\[-8pt]
             9 & 212  & $\frac{x(1-3x+(3-q)x^2-2(1-q)x^3)}{(1-x)(1-4x+(4-q)x^2-2(1-q)x^3)}$ &$\text{tot}_{n-1}(21)$ & \\\hline
             &&&&\\[-8pt]
             10 & 312 & $\frac{x(1-3x+2x^2+(1-q)x^3)}{(1-x)(1-4x+3x^2+(1-q)x^3)}$ &$\frac{(n-5)3^{n-3}+n-1}{4}$ & A212337\\\hline
        \end{tabular}
    \end{center}}
\caption{$F_\tau(x;q)$ and $\text{tot}_n(\tau)$ for all subwords patterns $\tau$ of length two or three.}\label{tab1}
\end{table}

\section{Two joint distributions on flattened Catalan words}

In this section, we establish formulas for the joint distributions on $\mathcal{F}$ involving the length and two triples of subword patterns by solving a pair of multi-parameter functional equations with derivative terms.  The technique illustrated is potentially applicable to other classes of functional equations.

In order to determine functional equations satisfied by the aforementioned joint distributions, we consider a further parameter as follows.   Let us refer to the rightmost increasing run within a word $w$ as its \emph{terminal run}.  Define $\text{trun}(w)$ to be the number of distinct letters appearing in the terminal run of $w$.  For example, we have $\text{trun}(w)=2$ for the word given by $w=347655895244$ as its terminal run is 244.  To aid in finding functional equations and recursions for arrays enumerating members of $\mathcal{F}$ according to the number of occurrences of certain subword patterns, we will consider the $\text{trun}-1$ parameter on members of $\mathcal{F}$.

In particular, we will be interested in determining explicitly multivariate distributions $F(x,y)=F(x,y;p,q,r)$ of the form
$$F(x,y)=\sum_{\pi \in \mathcal{F}}x^{|\pi|}y^{\text{trun}(\pi)-1}p^{\#\alpha(\pi)}q^{\#\beta(\pi)}r^{\#\gamma(\pi)},$$
where $x,y,p,q,r$ are indeterminates, $\alpha,\beta,\gamma$ are subword patterns and $|\pi|$ denotes the length of $\pi$. Then $[x^n]F(x,y)$ for each $n \geq 1$ gives the joint distribution of the $\alpha$, $\beta$ and $\gamma$ subwords on $\mathcal{F}_n$, together with $\text{trun}-1$ (marked by $p$, $q$, $r$ and $y$  respectively). Note that $F(x,1;1,1,1)=\frac{x(1-2x)}{(1-x)(1-3x)}$, which is the generating function for $|\mathcal{F}_n|=\frac{3^{n-1}+1}{2}$ for $n \geq 1$. As will be seen, it is often more convenient algebraically to deal with the parameter $\text{trun}-1$ rather than $\text{trun}$, especially when working with terms involving the $y$-partial derivative of $F(x,y)$.  Further, as seen in Subsections \ref{ssec3.2} and \ref{ssec3.3}, the $y=0$ (and also $y=1$) case of $F(x,y)$ will play an instrumental role in determining $F(x,y)$ for $y$ in general. Note that taking $y$ and two of $\{p,q,r\}$ to be unity in $F(x,y;p,q,r)$ gives the generating function $F_\tau(x;z)$ for $\tau \in \{\alpha,\beta,\gamma\}$, by the definitions.  For example, taking $y=q=r=1$, we have $F(x,1;p,1,1)=F_\alpha(x;p)$.

We consider first in this section the joint distribution on $\mathcal{F}$ for ascents/descents/levels and then the comparable distribution involving the patterns $122/211/111$.

\subsection{Distribution of ascents, descents and levels}

Recall that an \emph{ascent}, \emph{descent} or \emph{level} within a word $w=w_1\cdots w_n$ refers to an index $i \in [n-1]$ such that $w_i<w_{i+1}$, $w_i>w_{i+1}$ or $w_i=w_{i+1}$, respectively.  Let $\text{asc}(w)$, $\text{des}(w)$ and $\text{lev}(w)$ denote respectively the number of ascents, descents and levels in the word $w$, equivalently, the number of $12$, $21$ and $11$ subwords in $w$.   Define $A(x,y)=A(x,y;p,q,r)$ by
$$A(x,y)=\sum_{\pi \in \mathcal{F}}x^{|\pi|}y^{\text{trun}(\pi)-1}p^{\text{asc}(\pi)}q^{\text{des}(\pi)}r^{\text{lev}(\pi)}.$$
Then $A(x,y)$ satisfies the following functional equation.

\begin{lemma}\label{lemA(x,y)}
We have
\begin{equation}\label{lemA(x,y)e1}
(1-x(py+r))A(x,y)=x+qx\frac{\partial}{\partial y}A(x,y)\mid_{y=1}.
\end{equation}
\end{lemma}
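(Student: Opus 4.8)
The plan is to derive \eqref{lemA(x,y)e1} directly by decomposing each nonempty flattened Catalan word according to its final letter. Every member of $\mathcal{F}$ of length at least two is obtained in a unique way by appending a single letter $c$ to the right of some $\pi\in\mathcal{F}$, and conversely deleting the last letter of a member of $\mathcal{F}_n$ with $n\geq 2$ returns a member of $\mathcal{F}_{n-1}$, since both the Catalan and flattened conditions are inherited by prefixes. Thus $A(x,y)$ equals the contribution $x$ of the single word $1\in\mathcal{F}_1$ (for which $\text{trun}=1$ and there are no ascents, descents or levels) plus the sum, over all $\pi\in\mathcal{F}$, of the contributions of the admissible extensions $\pi c$.

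The structural fact I would exploit is that within any increasing run of a Catalan word consecutive letters either repeat or increase by exactly one, because $\pi_{i+1}\leq\pi_i+1$; hence the distinct letters of the terminal run are the consecutive integers $f,f+1,\ldots,\pi_n$, where $f$ is the first letter of that run, and so $\text{trun}(\pi)=\pi_n-f+1$. I would then split the admissible choices of $c$ into three cases. If $c=\pi_n+1$ (an ascent) or $c=\pi_n$ (a level), the terminal run is merely extended and the sequence of first-letters-of-runs is unchanged, so flattenedness is preserved and there is exactly one such $c$ in each case; an ascent increments $\text{asc}$ and raises $\text{trun}$ by one, whereas a level increments $\text{lev}$ and leaves $\text{trun}$ unchanged. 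If $c<\pi_n$ (a descent), a new terminal run begins, and flattenedness forces precisely $c\geq f$; combined with $c\leq\pi_n-1$, this gives exactly $\pi_n-f=\text{trun}(\pi)-1$ admissible descent letters, each incrementing $\text{des}$ and resetting $\text{trun}$ to $1$.

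Translating the three cases into generating functions yields the equation. Writing $m(\pi)=x^{|\pi|}y^{\text{trun}(\pi)-1}p^{\text{asc}(\pi)}q^{\text{des}(\pi)}r^{\text{lev}(\pi)}$, the ascent extensions contribute $pxy\,m(\pi)$ and the level extensions contribute $rx\,m(\pi)$, so summing over $\pi$ produces the terms $pxy\,A(x,y)$ and $rx\,A(x,y)$. The descent term is the delicate one: the factor $\text{trun}(\pi)-1$ enters as a multiplicity while, at the same time, the $y$-exponent collapses from $\text{trun}(\pi)-1$ down to $0$. Both effects are captured simultaneously by a partial derivative, since $\frac{\partial}{\partial y}m(\pi)\big|_{y=1}=(\text{trun}(\pi)-1)\,x^{|\pi|}p^{\text{asc}(\pi)}q^{\text{des}(\pi)}r^{\text{lev}(\pi)}$. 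Thus the total descent contribution is $qx\,\frac{\partial}{\partial y}A(x,y)\big|_{y=1}$, and collecting all pieces gives $A(x,y)=x+pxy\,A(x,y)+rx\,A(x,y)+qx\,\frac{\partial}{\partial y}A(x,y)\big|_{y=1}$, which rearranges at once into \eqref{lemA(x,y)e1}.

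I expect the main obstacle to be the bookkeeping in the descent case: checking that the flattened constraint on a new run is exactly $c\geq f$ and nothing more, counting the resulting $\text{trun}(\pi)-1$ choices, and recognizing that the simultaneous multiplicity factor and reset of the $y$-exponent to zero are precisely what the $y$-derivative evaluated at $y=1$ encodes. The ascent and level cases, and the verification that deletion of the last letter is a well-defined inverse landing back in $\mathcal{F}$, are by comparison routine.
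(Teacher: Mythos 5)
Your proposal is correct and follows essentially the same route as the paper: decompose by the appended last letter, handle the ascent and level cases directly, and observe that a descent admits exactly $\text{trun}(\pi)-1$ choices of new letter (those at least the first letter of the terminal run), which together with the reset of the $y$-exponent to zero is encoded by $qx\,\frac{\partial}{\partial y}A(x,y)\mid_{y=1}$. The paper phrases the descent count via marking a letter of the terminal run, but this is the same bookkeeping you describe.
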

\begin{proof}
We apply the \emph{symbolic} (see, e.g., \cite{FS}) method and enumerate the members of $\mathcal{F}$ according to the relative sizes of the final two letters.  Let $\pi \in \mathcal{F}-\mathcal{F}_1$ be given by $\pi=\pi'ab$ for some $a,b \geq 1$.  If $a<b$, then $b=a+1$ and the weight of such members of $\mathcal{F}$ is given by $pxyA(x,y)$, as both the $\text{asc}$ and $\text{trun}$ values are increased by one whenever $b$ is appended to $\pi'a$ in this case.  If $a=b$, then we get a contribution of $rxA(x,y)$.

So suppose $a>b$ within $\pi \in \mathcal{F}$, in which case we consider the value $m=\text{trun}(\pi'a)$.  Then $m \geq 2$ and $b \in [a-m+1,a-1]$, since $b \leq a-m$ would violate membership of $\pi$ in $\mathcal{F}$ (as $a-m+1$ is seen to be the largest descent bottom in $\pi'a$, assuming it contains at least one descent, with the descent bottoms required to be nondecreasing).  Thus, to obtain a member of $\mathcal{F}$ ending in a descent, first consider marking (the leftmost appearance of) some letter $b \in [a-m+1,a-1]$ occurring within the terminal run of $\pi'a$.  The generating function for the \emph{weight} of such marked members of $\mathcal{F}$ is given by
$$\sum_{n\geq2}x^n\sum_{m=2}^n(m-1)a_{n,m}=\frac{\partial}{\partial y}A(x,y)\mid_{y=1},$$
upon considering all possible values of $m$, where $a_{n,m}=[x^ny^{m-1}]A(x,y)$.  We then erase the marking and append a copy of the marked letter $b$ to $\pi'a$ so as to obtain $\pi=\pi'ab$ with $a>b$.  This has the effect of multiplying the generating function by $qx$ since both the length and the number of descents are increased by one.  Further, the resulting words $\pi$ each have trun value one (as they end in a descent), and hence their contribution towards $A(x,y)$ has $y$-coefficient zero. The preceding observations taken together thus imply that the weight of all members of $\mathcal{F}$ of the form $\pi'ab$ for some $a>b \geq 1$ is given by
$qx\frac{\partial}{\partial y}A(x,y)\mid_{y=1}$. Combining the cases above, and adding the contribution from the single member of $\mathcal{F}_1$, we then get
$$A(x,y)=x+pxyA(x,y)+rxA(x,y)+qx\frac{\partial}{\partial y}A(x,y)\mid_{y=1},$$
which may be rewritten as \eqref{lemA(x,y)e1}.
\end{proof}

We now solve the functional equation \eqref{lemA(x,y)e1} with derivative term.  In so doing, we describe a technique which could be applied to other such functional equations.

\begin{theorem}\label{Axyth}
The generating function that enumerates the members of $\mathcal{F}$ jointly according to length, $\text{trun}-1$, asc, des and lev (marked by $x$, $y$, $p$, $q$ and $r$, respectively) is given by
\begin{equation}\label{Axythe1}
A(x,y)=\frac{x(1-(p+r)x)^2}{(1-(r+py)x)(1-2(p+r)x+((p+r)^2-pq)x^2)}.
\end{equation}
\end{theorem}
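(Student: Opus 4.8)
The plan is to exploit the feature of the functional equation \eqref{lemA(x,y)e1} that makes it tractable, namely that its \emph{entire right-hand side is independent of $y$}. Writing $B(x)=\frac{\partial}{\partial y}A(x,y)\mid_{y=1}$, a function of $x$ alone, equation \eqref{lemA(x,y)e1} rearranges to
$$A(x,y)=\frac{x+qxB(x)}{1-(r+py)x}.$$
This exhibits $A(x,y)$ as an explicit rational function of $y$ whose only unknown ingredient is the single-variable series $B(x)$. The strategy is then a bootstrap (self-consistency) argument: I will differentiate this expression in $y$, set $y=1$, and impose that the outcome agree with $B(x)$, thereby obtaining a closed equation for $B(x)$.

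Carrying out the differentiation gives
$$\frac{\partial}{\partial y}A(x,y)=\frac{px\,(x+qxB(x))}{(1-(r+py)x)^2},$$
and specializing to $y=1$ turns the defining relation $B(x)=\frac{\partial}{\partial y}A(x,y)\mid_{y=1}$ into the single \emph{linear} equation
$$B(x)=\frac{px\,(x+qxB(x))}{(1-(p+r)x)^2}.$$
Solving this for $B(x)$ is routine; I expect to obtain
$$B(x)=\frac{px^2}{(1-(p+r)x)^2-pqx^2}=\frac{px^2}{1-2(p+r)x+((p+r)^2-pq)x^2},$$
where the denominator is exactly the quadratic factor appearing in the claimed formula \eqref{Axythe1}.

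It then remains to substitute this value of $B(x)$ back into $A(x,y)=\frac{x+qxB(x)}{1-(r+py)x}$ and simplify. The one computation worth watching is the numerator $x+qxB(x)$: after clearing the denominator $1-2(p+r)x+((p+r)^2-pq)x^2$ the key cancellation is that adding $pqx^2$ to the coefficient $(p+r)^2-pq$ restores the perfect square, giving $x(1-(p+r)x)^2$ in the numerator, which yields precisely \eqref{Axythe1}. There is no genuine analytic obstacle here: once one observes that the derivative term in \eqref{lemA(x,y)e1} is a function of $x$ only, the $y$-dependence decouples and the whole problem collapses to one linear equation. The main thing to get right is the algebraic bookkeeping in the bootstrap step and the final factoring, where $(1-(p+r)x)^2$ must be recovered exactly. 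This decoupling-and-bootstrap technique is the one I would expect to reuse for the analogous derivative functional equations governing the later distributions.
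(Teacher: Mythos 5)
Your proposal is correct, and the algebra checks out: the self-consistency equation $B(x)=\frac{px(x+qxB(x))}{(1-(p+r)x)^2}$ is linear in $B(x)$ with invertible coefficient (constant term $1$) as a formal power series, so it determines $B(x)=\frac{px^2}{1-2(p+r)x+((p+r)^2-pq)x^2}$ uniquely, and the back-substitution does produce the perfect square $x(1-(p+r)x)^2$ in the numerator. The route differs from the paper's in how the unknown univariate series is pinned down. The paper also starts from $A(x,y)=\frac{x+qxU(x)}{1-(py+r)x}$ with $U=\partial_y A|_{y=1}$, but it then returns to the coefficient level: it extracts recurrences for $a_{n,m}=[x^ny^{m-1}]A(x,y)$, introduces the row sums $v_n=\sum_m a_{n,m}$ alongside $u_n=\sum_m(m-1)a_{n,m}$, and solves the resulting $2\times2$ linear system $U=(p+r)xU+pxV$, $V=x+(p+r)xV+qxU$. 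Eliminating $V$ from that system yields exactly your one-line bootstrap equation, so the two derivations are algebraically equivalent; yours reaches it more directly by differentiating the closed rational form in $y$ and never needs the auxiliary sequence $v_n$ or the combinatorial recurrences. What the paper's heavier machinery buys is a template that transfers verbatim to the later functional equations (for $C$, $D$, $E$), where the right-hand side involves several $y$-specializations ($F(x,1)$, $F(x,0)$, $\partial_yF|_{y=1}$) and the coefficient arrays are defined combinatorially rather than via a single functional equation; your decoupling-and-evaluate idea extends there too (specialize the closed form at $y=0$, $y=1$, and differentiate at $y=1$ to get as many linear equations as unknowns), but that extension is not automatic and would need to be spelled out in each case.
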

\begin{proof}
First let $A(x,y)=\sum_{n\geq1}\sum_{m=1}^na_{n,m}x^ny^{m-1}$, and we use \eqref{lemA(x,y)e1} to obtain recurrences for the coefficients $a_{n,m}$.  Comparing the coefficient of $x^ny^{m-1}$ on both sides of \eqref{lemA(x,y)e1}, and considering whether $m>1$ or $m=1$, we get
\begin{align}
a_{n,m}=pa_{n-1,m-1}+ra_{n-1,m}, \qquad 2 \leq m \leq n,\label{Axythe2}\\
a_{n,1}=ra_{n-1,1}+q\sum_{j=2}^{n-1}(j-1)a_{n-1,j}, \qquad n \geq 2, \label{Axythe3}
\end{align}
with initial value $a_{1,1}=1$. Note that the recurrences \eqref{Axythe2} and \eqref{Axythe3} for the coefficients of the unknown function $A(x,y)$ may also be explained combinatorially in this case.

We now define two auxiliary sequences $u_n$ and $v_n$ which will enable us to determine a formula for $A(x,y)$.  Let $u_n=\sum_{j=2}^n(j-1)a_{n,j}$ for $n\geq 2$, with $u_1=0$, and let $v_n=\sum_{j=1}^na_{n,j}$ for $n\geq 1$. Multiplying both sides of \eqref{Axythe2} by $y^{m-1}$, summing over $2 \leq m \leq n$, differentiating both sides of the resulting equation with respect to $y$ and setting $y=1$ gives
\begin{align}
u_n&=\sum_{m=2}^n(m-1)a_{n,m}=p\sum_{m=2}^n(m-1)a_{n-1,m-1}+r\sum_{m=2}^{n-1}(m-1)a_{n-1,m}\notag\\
&=p\sum_{m=2}^n(m-2)a_{n-1,m-1}+p\sum_{m=2}^{n}a_{n-1,m-1}+ru_{n-1}=p\sum_{m=2}^{n-1}(m-1)a_{n-1,m}+pv_{n-1}+ru_{n-1}\notag\\
&=(p+r)u_{n-1}+pv_{n-1}, \qquad n \geq 2. \label{Auneq1}
\end{align}
Also, adding equations \eqref{Axythe2} and \eqref{Axythe3}, we have
\begin{align}
v_n&=\sum_{m=1}^na_{n,m}=r\sum_{m=1}^{n-1}a_{n-1,m}+p\sum_{m=2}^na_{n-1,m-1}+q\sum_{j=2}^{n-1}(j-1)a_{n-1,j}\notag\\
&=(p+r)v_{n-1}+qu_{n-1}, \qquad n \geq 2, \label{Auneq2}
\end{align}
with $u_1=0$ and $v_1=1$

Let $U=U(x)=\sum_{n\geq2}u_nx^n$ and $V=V(x)=\sum_{n\geq1}v_nx^n$.  Then we seek a formula for $U$ since $U(x)=\frac{\partial}{\partial y}A(x,y)\mid_{y=1}$, by the definitions, and hence
\begin{equation}\label{Axythe4}
A(x,y)=\frac{x+qxU(x)}{1-(py+r)x},
\end{equation}
by \eqref{lemA(x,y)e1}.  Multiplying both sides of \eqref{Auneq1} and \eqref{Auneq2} by $x^n$, and summing over all $n \geq 2$, yields
\begin{align*}
U&=(p+r)xU+pxV,\qquad \qquad V=x+(p+r)xV+qxU.
\end{align*}
Solving the preceding system of equations in $U$ and $V$ for $U$ gives
$$U=\frac{px^2}{1-2(p+r)x+((p+r)^2-pq)x^2}.$$
Substituting this expression for $U$ into \eqref{Axythe4}, and simplifying, implies that the solution to the functional equation \eqref{lemA(x,y)e1} is given by the formula in \eqref{Axythe1}, as desired.
\end{proof}

Taking $y=1$ in \eqref{Axythe1} gives
\begin{equation}\label{A(x,1)}
A(x,1)=\frac{x(1-(p+r)x)}{1-2(p+r)x+((p+r)^2-pq)x^2},
\end{equation}
from which one can obtain the generating functions of the individual distributions for ascents, descents or levels on $\mathcal{F}_n$ by setting two of $\{p,q,r\}$ equal to unity.  Thus, we have
\begin{align}
F_{12}(x;q)&=A(x,1;q,1,1)=\frac{x(1-(q+1)x)}{1-2(q+1)x+(q^2+q+1)x^2},\label{F12x}\\
F_{21}(x;q)&=A(x,1;1,q,1)=\frac{x(1-2x)}{1-4x-(q-4)x^2},\label{F21x}\\
F_{11}(x;q)&=A(x,1;1,1,q)=\frac{x(1-(q+1)x)}{1-2(q+1)x+q(q+2)x^2}.\label{F11x}
\end{align}
Further, setting $p=q=r=1$ in \eqref{A(x,1)}, or equivalently $q=1$ in each of \eqref{F12x}--\eqref{F11x}, yields $\frac{x(1-2x)}{(1-x)(1-3x)}$, as required.  Moreover, the generating function for the distribution of $\text{trun}-1$ on $\mathcal{F}_n$ for $n \geq 1$ is obtained by setting $p=q=r=1$ in \eqref{Axythe1}:
\begin{equation}\label{trungf}
A(x,y;1,1,1)=\sum_{n\geq1}\left(\sum_{\pi \in \mathcal{F}_n}y^{\text{trun}(\pi)-1}\right)x^n=\frac{x(1-2x)^2}{(1-x-xy)(1-4x+3x^2)}.\
\end{equation}

Let $\text{tot}_n(\tau)$ denote the total number of occurrences of a subword pattern $\tau$ for $n \geq 1$.  Differentiation of \eqref{F12x}--\eqref{F11x} with respect to $q$, setting $q=1$ and extracting the coefficient of $x^n$ yields the following result.

\begin{corollary}\label{Axyc1}
If $n \geq 1$, then
$$\text{tot}_n(12)=\frac{(n-1)(3^{n-1}+1)}{4},\qquad\text{tot}_n(21)=\frac{(n-1)(3^{n-2}-1)}{4},\qquad\text{tot}_n(11)=\frac{(n-1)(3^{n-2}+1)}{2}.$$
Moreover, the sum of the $\text{trun}-1$ values of all the members of $\mathcal{F}_n$ is given by $\frac{3^{n-1}-1}{2}$ for $n \geq 1$.
\end{corollary}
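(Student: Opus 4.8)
The plan is to use the elementary principle that the total number of occurrences of a pattern is recovered by differentiating its distribution generating function in the pattern-marking variable and evaluating at $1$. Since $F_\tau(x;q)=\sum_{n\ge1}\bigl(\sum_{\pi\in\mathcal{F}_n}q^{\#\tau(\pi)}\bigr)x^n$, differentiating termwise gives $\frac{\partial}{\partial q}F_\tau(x;q)\big|_{q=1}=\sum_{n\ge1}\bigl(\sum_{\pi\in\mathcal{F}_n}\#\tau(\pi)\bigr)x^n=\sum_{n\ge1}\text{tot}_n(\tau)\,x^n$. Thus it suffices to compute this $q$-derivative for each $\tau\in\{12,21,11\}$ from the explicit rational forms \eqref{F12x}, \eqref{F21x} and \eqref{F11x}, and then to read off the coefficient of $x^n$.

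First I would apply the quotient rule to each of the three rational functions. The key simplifying observation is that the denominator of $F_\tau(x;q)$ specializes at $q=1$ to $1-4x+3x^2=(1-x)(1-3x)$ in all three cases, so every one of the three $q$-derivatives is a rational function over the common denominator $(1-x)^2(1-3x)^2$. A short computation yields
\begin{align*}
\frac{\partial}{\partial q}F_{12}(x;q)\Big|_{q=1}&=\frac{x^2(1-3x+3x^2)}{(1-x)^2(1-3x)^2},\\
\frac{\partial}{\partial q}F_{21}(x;q)\Big|_{q=1}&=\frac{x^3(1-2x)}{(1-x)^2(1-3x)^2},\\
\frac{\partial}{\partial q}F_{11}(x;q)\Big|_{q=1}&=\frac{x^2(1-4x+5x^2)}{(1-x)^2(1-3x)^2},
\end{align*}
the middle one being simplest because the numerator of $F_{21}$ is free of $q$.

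The remaining step is coefficient extraction. Decomposing each of these into partial fractions of the shape $\frac{a}{1-x}+\frac{b}{(1-x)^2}+\frac{c}{1-3x}+\frac{d}{(1-3x)^2}$ produces a coefficient of $x^n$ of the form $a+b(n+1)+(c+d(n+1))3^n$, that is, a linear polynomial in $n$ plus $3^n$ times a linear polynomial in $n$; collecting terms rearranges this into the claimed products $\frac{(n-1)(3^{n-1}+1)}{4}$, $\frac{(n-1)(3^{n-2}-1)}{4}$ and $\frac{(n-1)(3^{n-2}+1)}{2}$. For the assertion about $\text{trun}-1$, I would instead differentiate the specialization $A(x,y;1,1,1)$ in \eqref{trungf} with respect to $y$; since only the factor $1-x-xy$ depends on $y$, this is immediate, and evaluating at $y=1$ gives $\frac{\partial}{\partial y}A(x,y;1,1,1)\big|_{y=1}=\frac{x^2}{(1-x)(1-3x)}$, whose coefficient of $x^n$ is $\frac{3^{n-1}-1}{2}$.

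I do not expect a genuine obstacle here: the argument is entirely a finite computation with no structural difficulty. The only place demanding care is the partial fraction step over the repeated factors $(1-x)^2$ and $(1-3x)^2$, where the residues at the double poles must be computed correctly and the low-order numerator terms tracked; a final consistency check against the direct enumerations of $\mathcal{F}_2=\{11,12\}$ and $\mathcal{F}_3=\{111,112,121,122,123\}$ (for which the four quantities evaluate to the predicted small values $1,0,1$ and $5,1,4$, together with trun-sums $1$ and $4$) fixes any stray constants.
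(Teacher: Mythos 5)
Your proposal is correct and follows essentially the same route as the paper, which derives Corollary \ref{Axyc1} precisely by differentiating \eqref{F12x}--\eqref{F11x} with respect to $q$ at $q=1$ (and \eqref{trungf} with respect to $y$ at $y=1$) and extracting coefficients; your intermediate rational functions and the numerical checks at $n=2,3$ are all accurate. The paper additionally supplies independent combinatorial proofs of these formulas in Subsection \ref{combproofs}, but those are a supplement rather than the route by which the corollary is first obtained.
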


In Subsection \ref{combproofs}, we provide combinatorial proofs of the formulas in Corollary \ref{Axyc1}. \medskip

\noindent{\bf Remarks:} We have $\text{tot}_n(21)=A261064[n-2]$ for $n \geq 3$ and $\text{tot}_n(11)=A082133[n-1]$ for $n\geq 1$, where $A\#\#\#\#\#\#[m]$ denotes the OEIS sequence parameterized as in the indicated entry.  Indeed, we have that each adjacency $ab$ where $a>b$ within a member of $\mathcal{F}_n$ is part of a larger string of letters having the form $(a-1)a^\ell b$ for some $\ell \geq 1$.  Such strings are referred to as \emph{peaks} in \cite{BHR}, where a generating function formula was found for their distribution of $\mathcal{F}_n$.  Hence, the number of descents coincides with this peak statistic on $\mathcal{F}_n$, which accounts for the total number of occurrences of each being the same on $\mathcal{F}_n$. \medskip

\subsection{Distribution of $\#122$, $\#211$ and $\#111$}

In this subsection, we consider the comparable joint distribution on $\mathcal{F}$ involving the 122, 211 and 111 subwords.  Define $B(x,y)=B(x,y;p,q,r)$ by
$$B(x,y)=\sum_{\pi \in \mathcal{F}}x^{|\pi|}y^{\text{trun}(\pi)-1}p^{\#122(\pi)}q^{\#211(\pi)}r^{\#111(\pi)}.$$
Then $B(x,y)$ satisfies the following functional equation.

\begin{lemma}\label{122l1}
We have
\begin{equation}\label{122l1e1}
(1-(r+y)x-(p-r)x^2y)B(x,y)=x+x^2(1-r)+x(1+(q-r)x)\frac{\partial}{\partial y}B(x,y)\mid_{y=1}.
\end{equation}
\end{lemma}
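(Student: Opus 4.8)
The plan is to mimic the symbolic-method derivation used for Lemma~\ref{lemA(x,y)}, again enumerating members of $\mathcal{F}-\mathcal{F}_1$ according to the relative sizes of the final two letters $\pi=\pi'ab$, but now being careful about the \emph{length-three} nature of the patterns $122$, $211$ and $111$. The key difference from the ascent/descent/level case is that whether appending $b$ to $\pi'a$ creates an occurrence of one of these subwords depends not only on the comparison of $a$ and $b$, but also on the relationship between the \emph{last two} letters already present in $\pi'a$. Consequently I expect the generating-function bookkeeping to require splitting the terminal-run contributions according to whether the run has one distinct letter or at least two, which is precisely what the extra quadratic terms $x^2(1-r)$ and $(q-r)x^2$ in \eqref{122l1e1} are recording.

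First I would treat the \textbf{level case} $a=b$. Appending $b=a$ to $\pi'a$ increases $\text{trun}$ by one and creates a new $111$ occurrence exactly when the letter preceding the final $a$ of $\pi'a$ also equals $a$, i.e.\ when the terminal run of $\pi'a$ already ends in a level. Writing $\pi'a=\pi''ca$, the subword $caa$ is isomorphic to $111$ iff $c=a$ and to $122$ never (since $c\le a$); so the new level contributes a factor $r$ only on the sub-collection of words whose terminal run has length at least two, and a factor $1$ (no new length-three pattern) on the words of terminal-run length one. This dichotomy between ``first letter of a fresh run'' and ``interior of a run'' is what produces the constant term $x+x^2(1-r)$: the bare $x$ accounts for $\mathcal{F}_1$ together with the ``short run'' contributions, while the $x^2(1-r)$ corrects for the words too short to have yet formed the pattern. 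I would track this by separating $B(x,y)$ from its value at the relevant boundary, using $B(x,0)$ or equivalently the length-indexed coefficients $b_{n,1}$, exactly as $a_{n,1}$ was isolated before.

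Next I would handle the \textbf{ascent case} $a<b$, forcing $b=a+1$; this appends a new letter to the terminal run and multiplies by $xy$, and it creates a $122$ occurrence precisely when the preceding letter equalled $a$ (so that $a\,a\,(a+1)\cong 112$) — no wait, more carefully, a $122$ occurrence at the new position means the pattern $xyy$, which requires the penultimate letter to be strictly less and the last two equal; since the last two are $a<b$, no $122$ is created at the appended slot, but an occurrence may be completed one position earlier. I would therefore re-examine which of the three patterns can terminate at position $|\pi|$ versus $|\pi|-1$, and accumulate the $p$-weight accordingly, giving the $(p-r)x^2y$ term on the left-hand side. Finally, for the \textbf{descent case} $a>b$, I would repeat the $\text{trun}$-derivative argument of Lemma~\ref{lemA(x,y)}: each admissible descent bottom $b\in[a-m+1,a-1]$ is selected by marking a letter in the terminal run, contributing $\frac{\partial}{\partial y}B(x,y)\mid_{y=1}$, and appending $b$ introduces a factor $x$ together with a $q$-weight (for a new $211$ when the run had length $\ge 2$) yielding the multiplier $x(1+(q-r)x)$.

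The main obstacle will be the descent case combined with the correct $r$-accounting: I must verify that a freshly appended descent bottom creates a $211$ (pattern $yyx$ with $y>x$) exactly when the terminal run of $\pi'a$ ended in a level, and that it simultaneously \emph{destroys} no previously uncounted $111$, so that the net effect is the factor $(1+(q-r)x)$ rather than $(1+qx)$ or $(1+(q-r)x+\cdots)$. I would confirm this by carefully writing $\pi'a=\pi''ca$ and checking all three patterns against the terminal triple $c\,a\,b$ under the hypotheses $c\le a$, $a>b$, $b\ge a-m+1$; the subtle point is that $caa\cong 111$ was already counted in $\pi'a$, whereas the new adjacency $aab\cong 221\not\cong 211$ unless $c=a$, which is why the $211$-weight $q$ enters only through the $r$-discrepancy term. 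Once the three cases are assembled into $B(x,y)=x+x^2(1-r)+(r+y)xB(x,y)+(p-r)x^2yB(x,y)+x(1+(q-r)x)\frac{\partial}{\partial y}B(x,y)\mid_{y=1}$, rearranging gives \eqref{122l1e1} directly.
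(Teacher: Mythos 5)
Your final assembled equation coincides with \eqref{122l1e1}, but the case analysis you describe would not produce it: the pattern weights $p$, $q$, $r$ are attached to the wrong cases throughout. The observation that drives the paper's proof is that all three subwords $122$, $211$, $111$ end in a \emph{level}, so a new occurrence of any of them can be created only when the appended letter $b$ equals its predecessor $a$. Accordingly, the paper puts no pattern weight at all on the ascent case ($xyB(x,y)$) or the descent case ($xB'$ with $B'=\frac{\partial}{\partial y}B(x,y)\mid_{y=1}$), and instead refines the case $a=b$ by the antepenultimate letter $c$: if $c<a$ the string $c\,a\,a$ is a $122$, giving $px^2yB(x,y)$; if $c>a$ it is a $211$, giving $qx^2B'$; and if $c=a$ it is a $111$, giving $rx\omega$ where $\omega=B(x,y)-x-xyB(x,y)-xB'$ is the weight of words ending in a level, obtained by subtraction. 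Adding $x+x^2$ for the words $1$ and $11$ and expanding $rx\omega$ yields exactly the terms $x^2(1-r)$, $(p-r)x^2y$ and $(q-r)x^2$.

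Your analysis contradicts this at several points. You assert that $c\,a\,a$ with $c\le a$ is ``never'' a $122$ --- but $c<a$ is precisely an occurrence of $122$, and this is the only place the weight $p$ can enter; you instead attribute the $(p-r)x^2y$ term to the ascent case, where none of the three patterns can be completed (the last three letters there are $c\,a\,(a+1)$). You claim that appending a descent bottom after a terminal level creates a $211$; the string $a\,a\,b$ with $a>b$ is a $221$, not a $211$ (you write the pattern as $yyx$, which is $221$), so the descent case carries no $q$-weight --- the $q$ arises from $c\,a\,a$ with $c>a$. You also state that appending a level increases $\mathrm{trun}$ by one, whereas $\mathrm{trun}$ counts \emph{distinct} letters of the terminal run and is unchanged by a repeated letter. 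Finally, the constant term $x+x^2(1-r)$ is not a ``short-run correction'' but simply the contribution $x+x^2$ of the words $1$ and $1^2$ minus the $rx^2$ produced when $\omega$ is expanded. Because each of these misattributions changes which generating function ($B$, $B'$, or $\omega$) a given weight multiplies, the contributions as you describe them do not sum to \eqref{122l1e1}; the derivation needs to be redone with the level-ending case split by the antepenultimate letter.
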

\begin{proof}
Let $\pi=\pi'ab \in \mathcal{F}-\mathcal{F}_1$.  First note that the weight of $\pi$ where $a<b$, and hence $b=a+1$, is given by $xyB(x,y)$.  If $a>b$, then the weight of such $\pi$ is $xB'$, where $B'=\frac{\partial}{\partial y}B(x,y)\mid_{y=1}$, upon reasoning as in the second paragraph in the proof of \eqref{lemA(x,y)e1} above.  So assume $a=b$, in which case, we consider the size of the antepenultimate letter $c$ of $\pi$.  If $c<a$, and hence $a=c+1$, then we get a contribution towards the weight of $px^2yB(x,y)$, whereas, if $c>a$, then the contribution is given by $qx^2B'$.  Note that both trun and $\#122$ increase by one in the former case, whereas only $\#211$ increases in the latter, with trun equal to one (due to the descent $c,a$).

So assume $c=a$, i.e., $\pi$ ends in $a,a,a$ for some $a\geq 1$.  Then the contribution towards the generating function $B(x,y)$ of such $\pi$ is given by $rx\omega$, where $\omega$ denotes the weight of all members of $\mathcal{F}$ ending in a level.  By subtraction, we have
$$\omega=B(x,y)-x-xyB(x,y)-xB',$$
upon excluding the single member of $\mathcal{F}_1$ as well as all the members of $\mathcal{F}$ ending in either an ascent or a descent (which accounts for the second and third subtracted terms, respectively).  Combining the various cases above, and including the contributions from the Catalan words $1$ and $1^2$ (which are seen not to be part of any of the previous cases), we get
$$B(x,y)=x+x^2+xyB(x,y)+xB'+px^2yB(x,y)+qx^2B'+rx\omega,$$
where $\omega$ is as stated, which leads to \eqref{122l1e1}.
\end{proof}

We now solve the functional equation \eqref{122l1e1} by applying the technique developed for \eqref{lemA(x,y)e1}.

\begin{theorem}\label{Bxyth}
The generating function that enumerates the members of $\mathcal{F}$ jointly according to length, $\text{trun}-1$, $\#122$, $\#211$ and $\#111$  (marked by $x$, $y$, $p$, $q$ and $r$, respectively) is given by
\begin{equation}\label{Bxythe1}
B(x,y)=\frac{x(1+(1-r)x)(1-(1+r)x-(p-r)x^2)^2}{(1-(r+y)x-(p-r)x^2y)\left((1-(1+r)x-(p-r)x^2)^2-x^2(1+(p-r)x)(1+(q-r)x)\right)}.
\end{equation}
\end{theorem}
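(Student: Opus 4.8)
The plan is to solve \eqref{122l1e1} by the same coefficient-extraction technique developed for \eqref{lemA(x,y)e1} in the proof of Theorem \ref{Axyth}, the added difficulty being that the functional equation now carries terms quadratic in $x$ on both sides. First I would write $B(x,y)=\sum_{n\geq1}\sum_{m=1}^n b_{n,m}x^ny^{m-1}$ and compare the coefficient of $x^ny^{m-1}$ on the two sides of \eqref{122l1e1}. Because of the factor $-(p-r)x^2y$ multiplying $B$ on the left and the factor $(q-r)x$ inside the derivative term on the right, the resulting recurrences link three consecutive levels $n,n-1,n-2$ rather than two. Distinguishing $m\geq 2$ (where the right-hand side contributes nothing) from $m=1$ should give
$$b_{n,m}=rb_{n-1,m}+b_{n-1,m-1}+(p-r)b_{n-2,m-1},\qquad 2\leq m\leq n,$$
together with a companion relation $b_{n,1}=rb_{n-1,1}+u_{n-1}+(q-r)u_{n-2}$ for $n$ large, where $u_{k}=\sum_{j\geq2}(j-1)b_{k,j}$; the standalone term $x^2(1-r)$ in \eqref{122l1e1} must be accounted for as a correction at $n=2$.

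Next, following the auxiliary-sequence device, I would introduce $u_n=\sum_{j=2}^n(j-1)b_{n,j}$ and $v_n=\sum_{j=1}^n b_{n,j}$, noting that $U(x):=\sum_{n\geq2}u_nx^n=\frac{\partial}{\partial y}B(x,y)\mid_{y=1}$. Multiplying the $m\geq2$ recurrence by $(m-1)$ (respectively by $1$) and summing over $m$, then folding in the $m=1$ relation, should produce the second-order recurrences
$$u_n=(1+r)u_{n-1}+v_{n-1}+(p-r)(u_{n-2}+v_{n-2}),\qquad v_n=(1+r)v_{n-1}+(p-r)v_{n-2}+u_{n-1}+(q-r)u_{n-2},$$
valid for the relevant ranges of $n$, with $u_1=0$, $v_1=1$, $v_2=2$ handled directly. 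Passing to the generating functions $U(x)$ and $V(x)=\sum_{n\geq1}v_nx^n$ then yields the linear system
$$U\big(1-(1+r)x-(p-r)x^2\big)=x(1+(p-r)x)V,\qquad V\big(1-(1+r)x-(p-r)x^2\big)=x(1+(1-r)x)+x(1+(q-r)x)U,$$
which I would solve for $U$. Writing $D=1-(1+r)x-(p-r)x^2$, elimination of $V$ gives $U=\dfrac{x^2(1+(p-r)x)(1+(1-r)x)}{D^2-x^2(1+(p-r)x)(1+(q-r)x)}$.

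Finally I would recover $B(x,y)$ by substituting $U=\frac{\partial}{\partial y}B\mid_{y=1}$ into the solved form of \eqref{122l1e1}, namely $B(x,y)=\dfrac{x(1+(1-r)x)+x(1+(q-r)x)U}{1-(r+y)x-(p-r)x^2y}$. The crucial simplification is that the numerator collapses: writing $R=1+(1-r)x$ and $E=D^2-x^2(1+(p-r)x)(1+(q-r)x)$ for the denominator of $U$, one has
$$xR+x(1+(q-r)x)U=\frac{xR\big(E+x^2(1+(p-r)x)(1+(q-r)x)\big)}{E}=\frac{xRD^2}{E},$$
since $E+x^2(1+(p-r)x)(1+(q-r)x)=D^2$ by the definition of $E$. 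Dividing by $1-(r+y)x-(p-r)x^2y$ then produces exactly \eqref{Bxythe1}. I expect the main obstacle to be bookkeeping rather than conceptual: the quadratic terms push the auxiliary recurrences to second order and break the clean pattern at the smallest values of $n$, so the base cases (especially $v_2$, where the $x^2(1-r)$ term intervenes) and the index shifts in passing to $U$ and $V$ must be tracked carefully; once the linear system is set up correctly, solving for $U$ and telescoping the numerator are routine.
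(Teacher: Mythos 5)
Your proposal is correct and follows essentially the same route as the paper: the same coefficient recurrences for $b_{n,m}$, the same auxiliary sequences $u_n,v_n$ leading to the identical linear system for $U$ and $V$, the same closed form for $U$, and the same telescoping of the numerator via $E+x^2(1+(p-r)x)(1+(q-r)x)=D^2$. The only cosmetic difference is that the paper carries the small-$n$ corrections explicitly inside the generating-function equations (whence the $x+(1-r)x^2$ term) rather than isolating them as base-case adjustments, but the computation is the same.
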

\begin{proof}
Define the array $b_{n,m}$ by $B(x,y)=\sum_{n\geq1}\sum_{m=1}^nb_{n,m}x^ny^{m-1}$.  Let $u_n$, $v_n$, $U=U(x)$ and $V=V(x)$  be as in the proof of \eqref{Axythe1} above except now they are defined in conjunction with $b_{n,m}$. Comparing coefficients of $x^ny^{m-1}$ on both sides of \eqref{122l1e1} implies for $n \geq 3$:
\begin{align}
b_{n,m}&=rb_{n-1,m}+b_{n-1,m-1}+(p-r)b_{n-2,m-1}, \qquad 2 \leq m \leq n,\label{Bxythe2}\\
b_{n,1}&=rb_{n-1,1}+u_{n-1}+(q-r)u_{n-2}, \label{Bxythe3}
\end{align}
with the initial conditions $b_{1,1}=b_{2,1}=b_{2,2}=1$.  Proceeding as before, we then obtain from \eqref{Bxythe2} and \eqref{Bxythe3} the following system of recurrences for $n \geq 3$:
\begin{align}
u_n&=(1+r)u_{n-1}+v_{n-1}+(p-r)(u_{n-2}+v_{n-2}),\label{Bxythe4}\\
v_n&=u_{n-1}+(1+r)v_{n-1}+(q-r)u_{n-2}+(p-r)v_{n-2}, \label{Bxythe5}
\end{align}
with $u_1=0,v_1=u_2=1,v_2=2$. Multiplying both sides of \eqref{Bxythe4} and \eqref{Bxythe5} by $x^n$, and summing over $n \geq 3$, then gives
\begin{align*}
U&=x^2+x(1+r+(p-r)x)U+x(V-x)+(p-r)x^2V,\\
V&=x+2x^2+x(1+r+(p-r)x)V-(1+r)x^2+x(1+(q-r)x)U,
\end{align*}
which implies
\begin{align*}
U&=\frac{x(1+(p-r)x)}{1-x(1+r+(p-r)x)}V, \qquad \qquad V=\frac{x+(1-r)x^2+x(1+(q-r)x)U}{1-x(1+r+(p-r)x)}.
\end{align*}
Solving for $U$ in the preceding system gives
$$U=\frac{x^2(1+(1-r)x)(1+(p-r)x)}{\left(1-(1+r)x-(p-r)x^2\right)^2-x^2(1+(p-r)x)(1+(q-r)x)}.$$
Finally, substituting this expression for $U$ into \eqref{122l1e1}, and simplifying, leads to the desired formula for $B(x,y)$.
\end{proof}

Differentiation of the generating functions for the individual subword distributions on $\mathcal{F}_n$, and comparing with Corollary \ref{Axyc1}, yields the following result.

\begin{corollary}\label{Bxyc1}
If $n \geq 2$, then $\text{tot}_n(122)=\text{tot}_{n-1}(12)$, $\text{tot}_n(211)=\text{tot}_{n-1}(21)$ and $\text{tot}_n(111)=\text{tot}_{n-1}(11)$,
where $\text{tot}_m(12)$, $\text{tot}_m(21)$ and $\text{tot}_m(11)$ are as given in Corollary \ref{Axyc1}.
\end{corollary}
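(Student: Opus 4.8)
The plan is to prove each of the three claimed equalities as an identity of ordinary generating functions and then read off the coefficientwise statement. For a subword $\tau$, write $T_\tau(x)=\sum_{n\geq1}\text{tot}_n(\tau)x^n$; since $\text{tot}_n(\tau)=\sum_{\pi\in\mathcal F_n}\#\tau(\pi)$, differentiating $F_\tau(x;q)$ with respect to $q$ and setting $q=1$ gives $T_\tau(x)=\frac{\partial}{\partial q}F_\tau(x;q)\mid_{q=1}$. Thus the three assertions $\text{tot}_n(122)=\text{tot}_{n-1}(12)$, $\text{tot}_n(211)=\text{tot}_{n-1}(21)$ and $\text{tot}_n(111)=\text{tot}_{n-1}(11)$ are each equivalent, after extracting the coefficient of $x^n$, to a single rational-function identity of the shape $T_{\tau_3}(x)=x\,T_{\tau_2}(x)$, where $\tau_3$ denotes the length-three pattern and $\tau_2$ the corresponding length-two one (the factor $x$ encoding the shift $n\mapsto n-1$). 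As both sides of each identity are power series with vanishing low-order terms, the coefficientwise equality then holds for all $n$, in particular for $n\geq2$.

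First I would extract the three single-pattern generating functions from Theorem \ref{Bxyth} by setting $y=1$ in \eqref{Bxythe1} and specializing two of $p,q,r$ to unity. A convenient preliminary observation is that the factor $1-(1+r)x-(p-r)x^2$ appears both (squared) in the numerator and (once) as the left factor of the denominator of $B(x,1)$, so one power cancels and $B(x,1;p,q,r)$ reduces to a quotient whose denominator is $\bigl(1-(1+r)x-(p-r)x^2\bigr)^2-x^2(1+(p-r)x)(1+(q-r)x)$; specializing then reproduces the entries $F_{111}$, $F_{122}$ and $F_{211}$ recorded in rows $4$, $5$ and $8$ of Table \ref{tab1}. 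The decisive simplification is that each of these rational functions degenerates at $q=1$ to the counting series $\frac{x(1-2x)}{(1-x)(1-3x)}$, its numerator and denominator collapsing to $x(1-2x)$ and $(1-x)(1-3x)$ respectively. Consequently, after the quotient rule, every $q$-derivative evaluated at $q=1$ is a fraction with the fixed denominator $(1-x)^2(1-3x)^2$, which makes the comparisons transparent.

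I would then carry out the three differentiations. For $211$, where the numerator is free of $q$, one gets immediately $T_{211}(x)=\frac{x^4(1-2x)}{(1-x)^2(1-3x)^2}$, while differentiating \eqref{F21x} yields $T_{21}(x)=\frac{x^3(1-2x)}{(1-x)^2(1-3x)^2}$, so $T_{211}=x\,T_{21}$. For $122$ the $q$-dependence sits in both numerator and denominator, and the computation collapses to $T_{122}(x)=\frac{x^3(1-3x+3x^2)}{(1-x)^2(1-3x)^2}$, matching $x$ times the value $T_{12}(x)=\frac{x^2(1-3x+3x^2)}{(1-x)^2(1-3x)^2}$ obtained from \eqref{F12x}. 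The case $111$ is the one I expect to be the main obstacle: its numerator in Table \ref{tab1} is itself a product and its denominator is cubic with quadratic $q$-dependence, so differentiation requires the product rule on the numerator together with the quotient rule, producing several terms before the cancellations (forced by the $q=1$ degeneration above) reduce it to $T_{111}(x)=\frac{x^3(1-4x+5x^2)}{(1-x)^2(1-3x)^2}$, which is exactly $x\,T_{11}(x)$ with $T_{11}$ read off from \eqref{F11x}. Once the three identities $T_{\tau_3}=x\,T_{\tau_2}$ are in hand, extracting $[x^n]$ gives the stated relations, and the explicit values follow by comparison with the closed forms of Corollary \ref{Axyc1}.
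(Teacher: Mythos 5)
Your proposal is correct and follows essentially the same route as the paper, which likewise obtains these identities by specializing $B(x,y)$ to the single-pattern generating functions, differentiating with respect to $q$ at $q=1$, and comparing with the derivatives of \eqref{F12x}--\eqref{F11x}; your computed series $T_{122}=x\,T_{12}$, $T_{211}=x\,T_{21}$ and $T_{111}=x\,T_{11}$ all check out. The paper additionally remarks on a one-line combinatorial explanation (inserting a duplicate letter into a marked adjacency of a member of $\mathcal{F}_{n-1}$), but its stated proof is the same generating-function differentiation you carry out.
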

\noindent These results are apparent combinatorially; for example, inserting an extra copy of the ascent top in a marked ascent within a member of $\mathcal{F}_{n-1}$ is seen to yield an arbitrary member of $\mathcal{F}_n$ containing at least one $122$ such that one of the occurrences of $122$ is marked.

Let $\mathcal{F}_n(\tau)$ denote the subset of $\mathcal{F}_n$ whose members avoid the subword $\tau$ and let $f_n(\tau)=|\mathcal{F}_n(\tau)|$. To obtain the generating function of the sequence $f_n(\tau)$ for $n \geq 1$, one can set $q=0$ in $F_\tau(x;q)$.  Doing this for the generating functions $F_\tau(x;q)$ where $\tau \in \{122,211,111\}$, whose formulas are special cases of \eqref{Bxythe1}, and extracting the coefficient of $x^n$ in each case yields the following result.

\begin{corollary}\label{Bxyc2}
We have
\begin{align}
f_n(122)&=\sum_{r=0}^{\lfloor\frac{n-1}{2}\rfloor}\binom{n+r}{3r+1}, \qquad n \geq 1,\label{Bxyc2e1}\\
f_n(211)&=\sum_{j=1}^n\sum_{r=0}^{\lfloor\frac{j-1}{2}\rfloor}\binom{j-1}{2r}\binom{n-r-1}{j-r-1}, \qquad n \geq 1,\label{Bxyc2e2}\\
f_n(111)&=\sum_{k=0}^{\lfloor\frac{n}{2}\rfloor}\binom{n-k}{k}2^{n-k-2}, \qquad n \geq 3.  \label{Bxyc2e3}
\end{align}
\end{corollary}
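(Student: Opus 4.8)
The plan is to obtain each generating function $\sum_{n\ge1}f_n(\tau)x^n$ as a specialization of the series $B(x,y)$ from Theorem~\ref{Bxyth}, and then to extract the coefficient of $x^n$ by a series expansion tailored to the individual pattern. First I would set $y=1$ in \eqref{Bxythe1} and specialize two of the three pattern variables $p,q,r$ to unity, isolating $F_{122}(x;q)$, $F_{211}(x;q)$ and $F_{111}(x;q)$ (the three rational functions recorded in Table~\ref{tab1}). Putting $q=0$ in each then produces the avoidance generating functions
\begin{equation*}
\sum_{n\ge1}f_n(122)x^n=\frac{x(1-x)^2}{1-4x+5x^2-3x^3+x^4},\qquad \sum_{n\ge1}f_n(211)x^n=\frac{x(1-2x)}{1-4x+3x^2+x^3},
\end{equation*}
together with $\sum_{n\ge1}f_n(111)x^n=\frac{x(1+x)(1-x-x^2)}{1-2x-2x^2}$. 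The remaining task is the coefficient extraction, carried out separately below.

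For the $122$ case, I would observe that the quartic denominator factors as $(1-x)(1-3x+2x^2-x^3)=(1-x)\bigl((1-x)^3-x^2\bigr)$, so the generating function collapses to $\frac{x(1-x)}{(1-x)^3-x^2}$. Expanding the geometric series $\bigl((1-x)^3-x^2\bigr)^{-1}=\sum_{r\ge0}x^{2r}(1-x)^{-3r-3}$ and clearing the factor $x(1-x)$ reduces the generating function to $\sum_{r\ge0}x^{2r+1}(1-x)^{-3r-2}$. Applying the binomial expansion of $(1-x)^{-3r-2}$ and reading off the coefficient of $x^n$ then gives $\binom{n+r}{3r+1}$ for each admissible $r$, with $r$ running up to $\lfloor(n-1)/2\rfloor$, which is \eqref{Bxyc2e1}.

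The $111$ case is handled by the substitution $2x+2x^2=2x(1+x)$, which yields $\frac{1}{1-2x-2x^2}=\sum_{j\ge0}2^jx^j(1+x)^j$ and hence $c_n:=[x^n]\frac{1}{1-2x-2x^2}=\sum_{k}2^{n-k}\binom{n-k}{k}$. Writing the numerator as $x(1+x)(1-x-x^2)=x-2x^3-x^4$, the coefficient of $x^n$ of the full generating function equals $c_{n-1}-2c_{n-3}-c_{n-4}$, and a short computation using the recurrence $c_n=2c_{n-1}+2c_{n-2}$ shows this equals $\tfrac14 c_n=\sum_{k}\binom{n-k}{k}2^{n-k-2}$, which is \eqref{Bxyc2e3}. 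The restriction $n\ge3$ arises precisely here, since the small-index terms (and the well-definedness of the exponent $2^{n-k-2}$ at $k=\lfloor n/2\rfloor$) must be checked directly.

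The $211$ case I expect to be the main obstacle, since its closed form is a double sum featuring the even-index binomials $\binom{j-1}{2r}$, which do not emerge from a direct geometric expansion of the rational generating function. Here I would instead verify the identity in the reverse direction, computing the generating function of the claimed double sum: summing over $n$ via $\sum_n\binom{n-r-1}{j-r-1}x^n=\frac{x^j}{(1-x)^{j-r}}$ leaves $\sum_{j\ge1}\frac{x^j}{(1-x)^j}\sum_{r}\binom{j-1}{2r}(1-x)^r$. The inner sum is evaluated through $\sum_{r}\binom{j-1}{2r}t^{2r}=\tfrac12\bigl((1+t)^{j-1}+(1-t)^{j-1}\bigr)$ with $t=\sqrt{1-x}$, after which the two resulting geometric series combine into $\frac{x(1-2x)}{(1-2x)^2-x^2(1-x)}=\frac{x(1-2x)}{1-4x+3x^2+x^3}$. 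The delicate point is that the radical $\sqrt{1-x}$ must cancel in the final combination; confirming this cancellation and matching the outcome with $F_{211}(x;0)$ establishes \eqref{Bxyc2e2} and completes the argument.
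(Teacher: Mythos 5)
Your proposal is correct: all three specializations at $q=0$ match the formulas in Table \ref{tab1}, the factorization $1-4x+5x^2-3x^3+x^4=(1-x)\bigl((1-x)^3-x^2\bigr)$ is right and the geometric expansion does give $\sum_{r\geq0}x^{2r+1}(1-x)^{-3r-2}$, the reduction $c_{n-1}-2c_{n-3}-c_{n-4}=\tfrac14c_n$ follows from $c_{n-2}=2c_{n-3}+2c_{n-4}$, and the $\sqrt{1-x}$ terms do cancel in the $211$ case to give $\frac{x(1-2x)}{1-4x+3x^2+x^3}$. However, your route is genuinely different from the paper's. The paper only asserts in one sentence that setting $q=0$ and ``extracting the coefficient of $x^n$'' yields the corollary; the proof it actually writes out (in the subsection on combinatorial proofs) is bijective: a member of $\mathcal{F}_n(\tau)$ is built from a level-free skeleton $\rho\in\mathcal{F}_j(11)$ by distributing $n-j$ duplicate letters over the positions of $\rho$ not forbidden by $\tau$, with the skeletons classified by their number $r$ of descents via Lemma \ref{11avoidlem} (giving $\binom{j-1}{2r}$ skeletons) and the insertions counted by compositions; the single sum for $f_n(122)$ then comes from collapsing $\sum_j\binom{j-1}{2r}\binom{n-j+r}{r}$ by a Vandermonde-type identity. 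Your analytic argument is shorter and self-contained given Theorem \ref{Bxyth}, at the cost of the $211$ case being a reverse verification that presupposes the stated double sum (your algebraic-conjugate trick with $t=\sqrt{1-x}$ is essentially inverting the combinatorial derivation); the paper's argument is longer but \emph{produces} the shape of each sum, explaining the indices $j$ and $r$ as skeleton length and descent count.
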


\subsection{Combinatorial proofs}\label{combproofs}

In this subsection, we provide counting arguments for Corollaries \ref{Axyc1} and \ref{Bxyc2}.  \medskip

\noindent\emph{Proof of Corollary \ref{Axyc1}:}\medskip

Before proceeding, we will need a bijective proof of the following result, which may also be obtained by setting $q=0$ in \eqref{F11x}.

\begin{lemma}\label{11avoidlem}
If $n\geq2$, then the number of members of $\mathcal{F}_{n}$ without levels is given by $2^{n-2}$.
\end{lemma}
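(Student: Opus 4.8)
The plan is to give a direct bijection between the level-free members of $\mathcal{F}_n$ and some set of cardinality $2^{n-2}$, the natural target being binary strings of length $n-2$ (equivalently, subsets of an $(n-2)$-element set). First I would record the structural consequences of forbidding levels together with the flattened condition. A level-free Catalan word has every increasing run strictly increasing, so each run is a string of consecutive integers $c, c+1, \ldots, c+k-1$. Recall from the excerpt that the run lengths of a flattened word are all at least two except possibly the last, and that no flattened word has two consecutive descents; hence a level-free member of $\mathcal{F}_n$ decomposes uniquely into strictly increasing runs $R_1, R_2, \ldots, R_s$ of lengths $\geq 2$ (with the last run allowed to have length $1$), where the first letters of the runs are nondecreasing. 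Because each run is strictly increasing of length $\geq 2$ and consists of consecutive integers starting at its (weakly increasing) initial value, I would argue that the word is completely determined by a short list of discrete choices made one position at a time.

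The key step is to set up a position-by-position encoding. Reading $\pi = \pi_1 \cdots \pi_n$ from left to right with $\pi_1 = 1$ forced, at each step $i \geq 2$ the next letter is constrained: since there are no levels, $\pi_i \neq \pi_{i-1}$, so either $\pi_i = \pi_{i-1} + 1$ (an ascent, forced to be exactly $+1$ by the Catalan condition) or $\pi_i < \pi_{i-1}$ (a descent). The flattened/no-two-consecutive-descents structure forces that a descent can only occur when the previous step was an ascent (a run has length $\geq 2$), and when a descent does occur the value $\pi_i$ is pinned down uniquely by the nondecreasing-run-bottoms requirement, exactly as in the descent analysis of Lemma~\ref{lemA(x,y)}: the descent bottom must equal the largest allowable value, namely the previous run-bottom. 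This means each step contributes essentially a single binary decision (ascend, or terminate-the-run-and-descend), and I would track these $n-1$ decisions while checking that the descent value carries no extra freedom. The cleanest formulation assigns a bit to each of the $n-2$ positions $i = 2, \ldots, n-1$ recording ascent-versus-descent at the appropriate step, with the first and last positions constrained so that exactly $2^{n-2}$ valid bit-strings remain.

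The main obstacle will be verifying that the encoding is a genuine bijection rather than merely an injection, i.e.\ that every binary string in the target set decodes to a valid flattened, level-free word and that no two distinct words share a code. Concretely I must confirm two things: first, that whenever the code calls for a descent, the forced descent value is well-defined and positive (never attempting to descend below $1$), which follows because a descent is only permitted immediately after an ascent so the current letter is at least $2$; and second, that the resulting run-bottoms are automatically nondecreasing so the flattened condition holds for free, which is where the specific choice of descent value (always to the previous run-bottom) is essential. I would handle the boundary cases separately — the constraint at position $n$ (last run may have length one) and the need for $\pi_1 = 1$ — to pin down precisely the endpoints of the index range and confirm the count is $2^{n-2}$ and not off by a factor of two. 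A clean alternative, should the direct bijection prove fiddly at the boundaries, is to establish the two-term recurrence $f_n(11) = 2 f_{n-1}(11)$ for $n \geq 3$ with $f_2(11) = 1$ directly from the symbolic decomposition (appending either an ascent or the unique forced descent to a level-free word, with a small correction for the initial segment), which immediately yields $2^{n-2}$; but since the statement explicitly requests a bijective proof, I would lead with the encoding above.
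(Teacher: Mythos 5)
There is a genuine gap, and it sits at the heart of your encoding: the claim that \emph{``when a descent does occur the value $\pi_i$ is pinned down uniquely by the nondecreasing-run-bottoms requirement''} is false. The flattened condition only forces the new run bottom $b$ to be at least the previous run bottom; if the current strictly increasing run ends in $a$ and contains $m$ distinct letters, then $b$ may be any value in $[a-m+1,\,a-1]$, giving $m-1$ choices, not one. (You have also inverted the inequality: the previous run bottom $a-m+1$ is the \emph{smallest} allowable descent value, not the largest.) Concretely, for $n=5$ the three words $12341$, $12342$, $12343$ are all level-free members of $\mathcal{F}_5$ with the identical ascent/descent pattern $AAAD$, so your map from words to bit-strings of ascent/descent decisions is not injective. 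Worse, the set of admissible patterns (first step an ascent, no two consecutive descents) has cardinality strictly less than $2^{n-2}$ for $n\geq 5$ — it is a Fibonacci-type count — so no assignment of one bit per position recording only ascent-versus-descent can biject onto a set of size $2^{n-2}$; the fibers have varying sizes ($1,3,2,1,1$ over the five admissible patterns when $n=5$, summing to $8$). Your fallback recurrence $f_n(11)=2f_{n-1}(11)$ is numerically true but your proposed justification (``appending either an ascent or the unique forced descent'') fails for the same reason: $1234$ admits four level-free extensions while $1212$ admits only one.

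The paper's bijection repairs exactly this defect. It maps a level-free word to a binary string of length $n-1$ beginning with $0$ (hence $2^{n-2}$ strings), written as $0^{k_1-\ell_1}1^{\ell_1}\cdots 0^{k_r-\ell_r}1^{\ell_r}0^t$, where $k_i$ is the length of the $i$-th increasing run and, crucially, the length $\ell_i$ of the $i$-th block of $1$'s records \emph{how far} the subsequent descent drops; the constraint $1\leq \ell_i<k_i$ exactly captures the $m-1=k_i-1$ admissible descent bottoms. So the extra freedom in the descent value is absorbed into the lengths of the runs of $1$'s rather than into a single bit. If you want to salvage a step-by-step encoding, you would need each descent to consume a variable number of bits (one per unit of drop), which is essentially a reparametrization of the paper's construction; alternatively, a corrected recurrence must track the terminal-run statistic $\text{trun}$ (as the paper does via $A(x,y)$), since the number of ways to extend a word by a descent depends on it.
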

\begin{proof}
Let $A=A_n$ denote the set of $\{0,1\}$-words of length $n-1$ starting with 0 and $B=B_n$ the subset of $\mathcal{F}_n$ whose members contain no levels.  If $\pi=0^{n-1}$, then let $\pi'=12\cdots n$.  Otherwise, we represent $\pi \in A$ by
\begin{equation}\label{11avoidleme1}
\pi=0^{k_1-\ell_1}1^{\ell_1}\cdots0^{k_r-\ell_r}1^{\ell_r}0^t,
\end{equation}
for some $r \geq 1$, where $1 \leq \ell_i<k_i$ for each $i \in [r]$ and $t \geq0$.  In this case, we let $\pi'$ be a word on the alphabet of positive integers containing exactly $r+1$ increasing runs $I_0,I_1,\ldots,I_r$ defined as follows:
\begin{align*}
I_0&=12\cdots k_1,\\
I_a&=(s_a-a+1)(s_a-a+2)\cdots(s_a-a+k_{a+1}), \quad 1 \leq a \leq r-1,\\
I_r&=(s_r-r+1)(s_r-r+2)\cdots(s_r-r+t+1),
\end{align*}
where $s_a=\sum_{j=1}^a(k_j-\ell_j)$ for $1 \leq a \leq r$, with $s_0=0$.  Note that the first letter of $I_{a+1}$, namely, $s_{a+1}-a$, is strictly less than the last letter of $I_a$, which is $s_a-a+k_{a+1}$, for each $0 \leq a \leq r-1$.  Further, we have that the subsequence of $\pi'$ consisting of the initial 1 and its descent bottoms is nondecreasing, as $\ell_i<k_i$ for all $i$ implies $1 \leq s_1 \leq s_2-1\leq \cdots\leq s_r-r+1$.

Since $\sum_{a=0}^r|I_a|=(k_1+\cdots+k_r)+t+1=n$, we have $\pi' \in \mathcal{F}_n$ for all $\pi \in A$, by the preceding observations.  Indeed, since $\pi'$ is seen to contain no levels, we have $\pi' \in B$ for all $\pi$.  Further, note that the number of runs of 1 in $\pi$ equals one less than the number of increasing runs in $\pi'$.  Thus, to reverse the mapping $\pi \mapsto \pi'$, we first decompose $\rho \in B$ into its increasing runs.  This allows for one to determine the number of runs of 1 in its inverse image $\pi \in A$, expressed as in \eqref{11avoidleme1}, and subsequently the parameters $k_i$ and $\ell_i$ for each $i \in [r]$, along with $t$.  Thus, the mapping $\pi \mapsto \pi'$ is reversible and hence provides a bijection between $A$ and $B$, which implies $|B|=2^{n-2}$, as desired.
\end{proof}

Using the prior result, one can explain the formula $|\mathcal{F}_n|=\frac{3^{n-1}+1}{2}$ for $n \geq 1$ combinatorially as follows.  By a \emph{run} of a letter (as distinguished from an increasing run), we mean a maximal string of the form $b^\ell$ for some letter $b$ and $\ell \geq 1$.   By a \emph{skeleton} of a word $w=w_1w_2\cdots$, we mean the subsequence of $w$ obtained by deleting all letters of $w$ except for those starting a run of a letter.  Consider the length $j$ of the skeleton of a member of $\mathcal{F}_n$, where $j \in [n]$.  We augment the skeleton by increasing the various run lengths so as to obtain a member of $\mathcal{F}_n$, thereby adding a total of $n-j$ letters distributed over $j$ possible runs.  Note that there are $2^{j-2}$ possibilities for the skeleton if $j \geq 2$, by Lemma \ref{11avoidlem}.  Considering all possible $j$ then gives
\begin{align*}
|\mathcal{F}_n|&=1+\sum_{j=2}^{n}2^{j-2}\binom{(n-j)+j-1}{j-1}=1+(1/2)\sum_{j=1}^{n-1}2^j\binom{n-1}{j}=1+(1/2)(3^{n-1}-1)\\
&=(1/2)(3^{n-1}+1),
\end{align*}
where the ``+1'' appearing before the summation accounts for the member of $\mathcal{F}_n$ consisting of all $1$'s.

We are now in position to provide combinatorial explanations of the formulas from Corollary \ref{Axyc1}, where we may assume $n \geq 3$.\medskip

\noindent {\bf $\text{tot}_n(11)$:} A level within a member of $\mathcal{F}_n$ may be obtained by considering the letter $x$ in any position within any member of $\mathcal{F}_{n-1}$, inserting a copy of $x$ to directly follow it and then marking the resulting level.  Since the position may be chosen independently of the member of $\mathcal{F}_{n-1}$, we get $(n-1)|\mathcal{F}_{n-1}|=\frac{(n-1)(3^{n-2}+1)}{2}$ total levels in the members of $\mathcal{F}_n$. \medskip

\noindent {\bf $\text{tot}_n(21)$:} From the proof of Lemma \ref{11avoidlem}, we have that the number of descents in $\rho \in B$ is equal to the number of runs of $1$ in $\rho^*\in A$ for all $\rho$, where $\rho^*$ denotes the inverse image of $\rho$ with respect to the prime mapping on $B$. In particular, the total number of descents in $B$ equals the number of runs of 1 in $A$, and we seek to enumerate the latter.

Now observe that there are $(n-3)2^{n-4}$ runs of 1 in $A$ in which the final 1 in the run corresponds to position $i$ for some $i \in [2,n-2]$.  To see this, put $0,1,0$ for $\pi_1,\pi_i,\pi_{i+1}$ within $\pi=\pi_1\cdots \pi_{n-1} \in A$, fill in the other entries of $\pi$ arbitrarily with $0$'s and $1$'s ($2^{n-4}$ choices if $n\geq 4$) and then mark the run of 1 terminating at position $i$ in the resulting word.  This generates $(n-3)2^{n-4}$ marked members of $A$ wherein an internal run of 1 is marked.  To this, we add the number of terminal runs of 1 in $A$, which is seen to be $2^{n-3}$, upon putting $0$ and $1$ for $\pi_1$ and $\pi_{n-1}$, respectively, and filling in the other positions of $\pi$ arbitrarily.  This implies that $A$ has a total of $(n-3)2^{n-4}+2^{n-3}=(n-1)2^{n-4}$ runs of 1, and hence the same number of descents in $B$.

Since augmenting a skeleton of length $j\geq 3$ with $n-j$ letters, added to the runs as described above, does not change the number of descents, we have
$$\text{tot}_n(21)=\sum_{j=3}^n(j-1)2^{j-4}\binom{n-1}{j-1}=\frac{n-1}{4}\sum_{j=3}^n2^{j-2}\binom{n-2}{j-2}=\frac{(n-1)(3^{n-2}-1)}{4},$$
as desired. \medskip

\noindent {\bf $\text{tot}_n(12)$:} From the proof of Lemma \ref{11avoidlem}, we see that an ascent in $\rho \in B$ corresponds to a letter $y$ in $\rho^* \in A$ such that $y$ is not the final 1 within a run of 1 in $\rho^*$.  By subtraction and the proof of the formula for $\text{tot}_n(21)$, there are $(n-1)2^{n-2}-(n-1)2^{n-4}=3(n-1)2^{n-4}$ such letters $y$ in $A$.  Thus, there are $3(j-1)2^{j-4}\binom{n-1}{j-1}$ ascents altogether in the members of $\mathcal{F}_n$ whose skeletons have length $j$ for each $j \in[3,n]$, which is three times the corresponding number of descents.  Considering all possible $j$ and adding to this, $n-1$, to account for the ascents coming from the members of $\mathcal{F}_n$ whose skeleton has length two, we get
$$\text{tot}_n(12)=n-1+3\text{tot}_n(21)=n-1+\frac{3(n-1)(3^{n-2}-1)}{4}=\frac{(n-1)(3^{n-1}+1)}{4}.$$

\noindent {\bf $\text{trun}-1$:} Let $\mathcal{F}_n^*$ denote the set of marked members $\pi \in \mathcal{F}_n$ wherein one of the distinct letters, not the smallest, within the terminal increasing run of $\pi$ is designated.  Note that $|\mathcal{F}_n^*|$ is seen to equal the sum of the $\text{trun}-1$ parameter values taken over all the members of $\mathcal{F}_n$. To show $|\mathcal{F}_n^*|=\frac{3^{n-1}-1}{2}$, it suffices to define a bijection between $\mathcal{F}_n^*$ and $\mathcal{F}_n-\{1^n\}$, where $n\geq2$.  To do so, let $\alpha\in \mathcal{F}_n^*$ be decomposed as $\alpha=J_1J_2\cdots J_s$ for some $s \geq 1$, where the $J_i$ denote the increasing runs of $\alpha$.  We write out $J_s$ explicitly as $J_s=a_1^{k_1}\cdots a_t^{k_t}$ for some $t \geq 2$, where $a_{i+1}=a_i+1$ for $1 \leq i \leq t-1$ and $k_i \geq 1$ for each $i \in [t]$.  Suppose that the designated letter of $\alpha$ is $a_p$ for some $p \in [2,t]$.

If $p=t$, then let $f(\alpha)$ be given by
\begin{equation}\label{combtrune1}
f(\alpha)=1^{k_t}J_1'\cdots J_{s-1}'(J_s-a_t^{k_t})',
\end{equation}
where $w'=w+1$ for a sequence $w$ and $J_s-a_t^{k_t}$ is obtained by deleting the terminal string $a_t^{k_t}$ from $J_s$.  If $2 \leq p \leq t-1$, then let
\begin{equation}\label{combtrune2}
f(\alpha)=1^{k_p}2^{k_{p+1}}\cdots(t-p+1)^{k_t}J_1\cdots J_{s-1}(J_s-a_p^{k_p}a_{p+1}^{k_{p+1}}\cdots a_t^{k_t}).
\end{equation}
One may verify that $f(\alpha) \in \mathcal{F}_n$ for all $\alpha \in \mathcal{F}_n^*$.  Further, the all $1$'s member of $\mathcal{F}_n$ is not in the range of $f$ since $t\geq 2$ with $p \in [2,t]$ implies that the section $J_s-a_t^{k_t}$ in \eqref{combtrune1} and also $2^{k_{p+1}}\cdots(t-p+1)^{k_t}$ in \eqref{combtrune2} are nonempty.  Note that the range of $f$ in \eqref{combtrune1} is the subset of $\mathcal{F}_n-\{1^n\}$ whose members contain only one run of 1, whereas in \eqref{combtrune2}, it consists of those members containing two or more runs of 1.   Therefore, the mapping $f$ may be reversed by considering whether a member of $\mathcal{F}_n-\{1^n\}$ contains one or more runs of 1.  Thus, $f$ provides the desired bijection and completes the proof of Corollary \ref{Axyc1}. \hfill \qed \medskip

\noindent\emph{Proof of Corollary \ref{Bxyc2}:}\medskip

To show \eqref{Bxyc2e1}, consider forming members of $\mathcal{F}_n(122)$ by adding $n-j$ letters to members of $\mathcal{F}_j(11)$ for some $j \in [n]$, as described above. In order to avoid 122, we must be careful not to increase the run lengths of any ascent tops within a skeleton.  Suppose $\rho \in \mathcal{F}_j(11)$, with $\text{des}(\rho)=r$.  Then there are $r+1$ places (namely, directly following one of the $r$ descent bottoms or after the initial $1$) within $\rho$ where extra letters may be added to obtain members $\pi \in \mathcal{F}_n(122)$ from $\rho$.  Thus, there are $\binom{n-j+r}{r}$ possible $\pi$ that arise from each such $\rho$.

To enumerate $\rho$ where $\text{des}(\rho)=r$, note that such $\rho$ are in one-to-one correspondence with members $\alpha \in A_{j}$ containing exactly $r$ runs of $1$, where the set $A_{j}$ is as in the proof of Lemma \ref{11avoidlem}.  Such $\alpha$ are synonymous with integral sequences $(x_1,x_2,\ldots,x_{2r+1})$ where $\sum_{i=1}^{2r+1}x_i=j-1$, with $x_i \geq 1$ for $1 \leq i \leq 2r$ and $x_{2r+1}\geq0$, and hence have cardinality given by $\binom{j-1}{2r}$; see, for example, \cite[p.\,15]{Stan}.  Note that if $\text{des}(\rho)=r$, then we must have $j\in[2r+1,n]$ with $0 \leq r \leq \lfloor(n-1)/2\rfloor$.  Summing over all possible $r$ and $j$ then gives
$$f_n(122)=\sum_{r=0}^{\lfloor\frac{n-1}{2}\rfloor}\sum_{j=2r+1}^n\binom{j-1}{2r}\binom{n-j+r}{r}=\sum_{r=0}^{\lfloor\frac{n-1}{2}\rfloor}\binom{n+r}{3r+1},$$
where we have made use of \cite[Identity~5.26]{GKP} in the second equality, which may readily be explained combinatorially (see \cite[Identity~136]{BQ}).

A similar argument applies to \eqref{Bxyc2e2}, except now when letters are added to $\rho \in \mathcal{F}_j(11)$ so as to generate $\pi \in \mathcal{F}_n(211)$, no runs of letters corresponding to descent bottoms of $\rho$ can have length greater than one in $\pi$.  Thus, there are now $j-r$ places in which to insert the $n-j$ letters into $\rho$ so as to produce $\pi$, and hence $\binom{(n-j)+j-r-1}{j-r-1}=\binom{n-r-1}{j-r-1}$ possible $\pi$ are produced for each $\rho \in \mathcal{F}_j(11)$ with $\text{des}(\rho)=r$.  Considering all possible $j$ and $r$ gives \eqref{Bxyc2e2}.

Finally, for \eqref{Bxyc2e3}, suppose $\pi \in \mathcal{F}_n(111)$ is formed by adding letters to $\rho \in \mathcal{F}_{n-k}(11)$.  Then, in order for $\pi$ to avoid 111, at most one extra letter of the same kind can be inserted so as to follow a letter in $\rho$, and hence there are $\binom{n-k}{k}$ possible $\pi$ generated from each $\rho$.  Note $n \geq 3$  and $0 \leq k \leq \lfloor n/2\rfloor$ implies $n-k\geq2$.  Thus, there are $\binom{n-k}{k}2^{n-k-2}$ members of $\mathcal{F}_n(111)$ whose skeleton has length $n-k$, by Lemma \ref{11avoidlem}. Summing over all $0 \leq k \leq \lfloor n/2\rfloor$ then implies \eqref{Bxyc2e3} and completes the proof of Corollary \ref{Bxyc2}. \hfill \qed

\section{Further distributions on flattened Catalan words}

In this section, we determine the joint distribution on $\mathcal{F}$ for three more triples of subword patterns, taken together with the length and $\text{trun}-1$.  We apply and extend the technique developed in the prior section in solving the functional equations which arise that are satisfied by these distributions.

To do so, we make use of generating functions of certain sequences related to arrays enumerating various subsets of $\mathcal{F}$.  Given an array $s=\left(s_{n,m}\right)_{1\leq m\leq n}$, define the sequences $u_n=u_n^{(s)}$, $v_n=v_n^{(s)}$ and $w_n=w_n^{(s)}$ by letting $u_n=\sum_{m=1}^n(m-1)s_{n,m}$, $v_n=\sum_{m=1}^ns_{n,m}$ and $w_n=s_{n,1}$ for $n \geq 1$.  We will omit the superscript in $u_n^{(s)}$ and the others when the array $s$ in question is understood.  Define the generating functions $U=U(x)$, $V=V(x)$ and $W=W(x)$ by letting $U=\sum_{n\geq1}u_nx^n$, $V=\sum_{n\geq1}v_nx^n$ and $W=\sum_{n\geq1}w_nx^n$.

\subsection{Distribution of $\#112$, $\#121$ and $\#221$}

Given $1 \leq m \leq n$, let $\mathcal{F}_{n,m}$ denote the subset of $\mathcal{F}_n$ consisting of those members $\pi$ where $\text{trun}(\pi)=m$. Let $c_{n,m}=c_{n,m}(p,q,r)$ be given by
$$c_{n,m}=\sum_{\pi \in \mathcal{F}_{n,m}}p^{\#112(\pi)}q^{\#121(\pi)}r^{\#221(\pi)}, \qquad 1 \leq m \leq n.$$
The array $c_{n,m}$ is given recursively as follows.

\begin{lemma}\label{112l1}
If $n \geq 3$, then
\begin{align}
c_{n,m}&=c_{n-1,m}+c_{n-1,m-1}+(p-1)c_{n-2,m-1}, \qquad 2 \leq m \leq n,\label{112l1e1}\\
c_{n,1}&=c_{n-1,1}+r\sum_{j=2}^{n-2}(j-1)c_{n-2,j}+\sum_{j=2}^{n-1}(j-2+q)(c_{n-2,j-1}+(p-1)c_{n-3,j-1}), \label{112l1e2}
\end{align}
with $c_{1,1}=c_{2,1}=c_{2,2}=1$ and $c_{0,m}=0$ for all $m \geq1$.
\end{lemma}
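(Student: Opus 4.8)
The plan is to derive both recursions by a direct combinatorial decomposition of the sets $\mathcal{F}_{n,m}$ according to the relative sizes of the last two or three letters, in the same spirit as the symbolic arguments used for Lemmas~\ref{lemA(x,y)} and \ref{122l1}, but now bookkeeping the three pattern statistics letter-by-letter at the junction where a new letter is appended. Throughout I would repeatedly use two structural facts about $\mathcal{F}$: within any increasing run consecutive letters differ by $0$ or $+1$, so the penultimate letter of a word whose terminal run has top value $a$ lies in $\{a-1,a\}$; and flattenedness forces the run-starts to be nondecreasing, so a descent appended after a terminal run with $j$ distinct letters and top $a$ must land in $[a-j+1,a-1]$, giving exactly $j-1$ admissible descent bottoms. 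I would also record at the outset the sub-bijection that deleting the top letter of a member of $\mathcal{F}_{n,m}$ which ends in a \emph{level} at its top (last two letters equal to the run top) yields a member of $\mathcal{F}_{n-1,m}$ with the same $112$, $121$ and $221$ content, since the deleted adjacency $aa$ sits at the end and cannot complete any of the three tracked patterns.

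For \eqref{112l1e1} (the case $m\ge2$) I would split $\pi\in\mathcal{F}_{n,m}$ by whether its last letter continues the terminal run as a level at the top or as a fresh ascent top. The level case is handled by the sub-bijection above and contributes $c_{n-1,m}$. In the ascent case, deleting the top letter gives $\pi'\in\mathcal{F}_{n-1,m-1}$; the appended ascent creates a new occurrence of $112$ exactly when $\pi'$ ends in a level, and no new $121$ or $221$. Writing the corresponding weight-sum as a full sum over $\mathcal{F}_{n-1,m-1}$ plus a correction over those $\pi'$ ending in a level, and applying the sub-bijection once more to the latter set (which bijects with $\mathcal{F}_{n-2,m-1}$), gives the ascent contribution $c_{n-1,m-1}+(p-1)c_{n-2,m-1}$ and hence \eqref{112l1e1}.

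For \eqref{112l1e2} (the case $m=1$, i.e.\ words whose terminal run is constant) I would first peel off those whose terminal run has length at least two (this includes $1^n$): deleting one copy of the last letter is a weight-preserving bijection onto $\mathcal{F}_{n-1,1}$, yielding the leading term $c_{n-1,1}$. The remaining members end in a genuine descent $a\,b$ with $b\in[a-j+1,a-1]$, where $\sigma=\pi_1\cdots\pi_{n-1}$ has $\text{trun}(\sigma)=j$; here $j\ge2$ necessarily, since $j=1$ would make $\sigma$ end in a descent and thus produce two consecutive descents, impossible in $\mathcal{F}$. I would then split on the shape of $\sigma$ at its top. If $\sigma$ ends in a level at the top, every one of the $j-1$ choices of $b$ completes a new $221$ (factor $r$) and nothing else, and the sub-bijection identifies the weight-sum of such $\sigma$ with $c_{n-2,j}$, giving the middle term $r\sum_{j\ge2}(j-1)c_{n-2,j}$. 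If instead $\sigma$ ends in an ascent at the top, then the choice $b=a-1$ completes a $121$ (factor $q$) while each of the other $j-2$ choices completes a $231$ (untracked, factor $1$), for a per-$\sigma$ factor of $j-2+q$; crucially, the weight-sum over such $\sigma$ is exactly the ascent contribution already computed in \eqref{112l1e1}, namely $c_{n-2,j-1}+(p-1)c_{n-3,j-1}$, which produces the final term. Reading off the correct ranges $[2,n-2]$ and $[2,n-1]$ from the length constraints of the sub-bijections, together with the stated initial values, completes the proof.

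The main obstacle I anticipate is the junction bookkeeping in \eqref{112l1e2}: one must verify that an appended descent can complete \emph{only} a $221$ (when preceded by a level at the top) or a $121$/$231$ (when preceded by an ascent at the top) and never a $112$, correctly isolate the single $b=a-1$ case from the remaining $j-2$ descent bottoms, and — the key simplification — recognize that the weight-sum over the ``ascent-at-top'' words $\sigma$ is not a new quantity but coincides with the ascent contribution $c_{n-2,j-1}+(p-1)c_{n-3,j-1}$ derived for \eqref{112l1e1}. Pinning down the summation limits exactly, and confirming that $1^n$ is absorbed into the $c_{n-1,1}$ term rather than the descent terms, is the remaining delicate point.
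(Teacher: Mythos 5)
Your proposal is correct and follows essentially the same route as the paper: condition on whether the final adjacency is a level, an ascent, or a descent, and in the descent case further split on whether the antepenultimate adjacency is a level (yielding the $221$ term $r\sum_j(j-1)c_{n-2,j}$) or an ascent (yielding the $(j-2+q)(c_{n-2,j-1}+(p-1)c_{n-3,j-1})$ term), with the $j-1$ admissible descent bottoms coming from the nondecreasing-run-starts condition. Your observation that the ascent-at-top weight coincides with the ascent contribution from \eqref{112l1e1} is exactly how the paper obtains that factor.
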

\begin{proof}
The initial conditions are easily verified, so assume $n \geq 3$.  To show \eqref{112l1e1}, first note that the weight of the members ending in a level and belonging to $\mathcal{F}_{n,m}$ where $m\geq 2$ is given by $c_{n-1,m}$.  So suppose that the last two letters of $\pi \in \mathcal{F}_{n,m}$ do not form a level, and hence must form an ascent as $m \geq 2$.  Such $\pi$ may be obtained by appending $a+1$ to any $\rho \in \mathcal{F}_{n-1,m-1}$ ending in $a$ for some $a \geq 1$.  If $\rho$ does not end in $a,a$, then there are $c_{n-1,m-1}-c_{n-2,m-1}$ possibilities for $\rho$, by subtraction, and hence the same number of possibilities for the corresponding $\pi$ obtained from $\rho$, as none of the three patterns in question are introduced in this case by the addition of the terminal $a+1$ letter. On the other hand, if $\rho$ does end in $a,a$, then appending $a+1$ introduces an occurrence of $112$, and thus there are $pc_{n-2,m-1}$ possibilities for $\pi$, where the factor of $p$ accounts for this additional 112.  Combining the prior cases above implies \eqref{112l1e1}.

For \eqref{112l1e2}, first note that there are $c_{n-1,1}$ possibilities for members of $\mathcal{F}_{n,1}$ whose last two letters are the same.  Otherwise, $m=1$ implies that the last two letters of $\pi \in \mathcal{F}_{n,1}$ form a descent.  So assume $\pi=\pi'\ell i$ for some $\ell \geq2$ and $i \in [\ell-1]$.  First suppose $\pi'$ has last letter $\ell$, with $\text{trun}(\pi')=j$ for some $j \in[\ell]$.  Note $i \leq \ell-j$ is not possible, as the sequence of descent bottoms (together with the first letter) of $\pi$ is nondecreasing.  So we have $i \in [\ell-j+1,\ell-1]$, and in particular, there are $j-1$ possibilities for $i$.  Since appending $i$ to $\pi'\ell$ in this case introduces a $221$, considering all possible $j$ implies that there are $r\sum_{j=2}^{n-2}(j-1)c_{n-2,j}$ possibilities for $\pi$.

So assume now $\pi'$ does not end in $\ell$, and hence it must end in $\ell-1$, for otherwise, $\pi$ would contain two consecutive descents, which would violate membership in $\mathcal{F}$. Thus, we have $\pi=\pi''(\ell-1)\ell i$ and suppose $\text{trun}(\pi''(\ell-1))=j-1$ for some $j \in [2,n-1]$.  Then there are $pc_{n-3,j-1}+(c_{n-2,j-1}-c_{n-3,j-1})$ possibilities concerning  $\pi''(\ell-1)\ell$ in this case if $n\geq4$, upon considering whether or not $\pi''$ ends in $\ell-1$.  The preceding expression is also seen to be correct (giving a value of $1$) if $n=3$, in which case $\pi''$ is empty and $\ell=2$.  For each choice of $\pi''(\ell-1)\ell$, there are $j-2+q$ possibilities for the appended letter $i$ in forming $\pi$.  To see this, note that we must have $i\in[\ell-j+1,\ell-1]$ and that the addition of $i=\ell-1$ introduces a 121, with none of the three subword patterns in question arising from the addition of $i<\ell-1$.  Thus, there are $(j-2+q)(c_{n-2,j-1}+(p-1)c_{n-3,j-1})$ possibilities for $\pi=\pi''(\ell-1)\ell i$ for each $j$. Considering all possible $j$ yields the second summation on the right side of \eqref{112l1e2} and completes the proof.
\end{proof}

Let $C(x,y)=\sum_{n\geq1}\sum_{m=1}^nc_{n,m}x^ny^{m-1}$ and we seek an explicit formula for $C(x,y)$.  Multiply both sides of \eqref{112l1e1} by $x^ny^{m-1}$ and sum over all $n \geq 3$ and $2 \leq m \leq n$ and then multiply both sides of \eqref{112l1e2} by $x^n$ and sum over $n \geq 3$.  Adding the two resulting equations gives
\begin{align*}
&C(x,y)-x-x^2(1+y)\\
&=x(C(x,y)-x)+xy(C(x,y)-x)+(p-1)x^2yC(x,y)+rx^2\frac{\partial}{\partial y}C(x,y)\mid_{y=1}\\
&\quad+\sum_{n\geq3}\sum_{j=2}^{n-1}(j-2)c_{n-2,j-1}x^n+(p-1)\sum_{n\geq4}\sum_{j=2}^{n-2}(j-2)c_{n-3,j-1}x^n+q\sum_{n\geq3}\sum_{j=2}^{n-1}c_{n-2,j-1}x^n\\
&\quad+(p-1)q\sum_{n\geq4}\sum_{j=2}^{n-2}c_{n-3,j-1}x^n\\
&=x(1+y+(p-1)xy)C(x,y)-x^2(1+y)+x^2(r+1+(p-1)x)\frac{\partial}{\partial y}C(x,y)\mid_{y=1}\\
&\quad+qx^2(1+(p-1)x)C(x,1),
\end{align*}
which may be rewritten to give the following result.

\begin{lemma}\label{112l2}
We have
\begin{equation}\label{112l2e1}
(1-x(1+y)-(p-1)x^2y)C(x,y)=x+x^2(r+1+(p-1)x)U+qx^2(1+(p-1)x)V,
\end{equation}
where $U=\frac{\partial}{\partial y}C(x,y)\mid_{y=1}$ and $V=C(x,1)$.
\end{lemma}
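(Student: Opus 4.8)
The plan is to convert the two recurrences of Lemma \ref{112l1} into a single generating-function identity for $C(x,y)$. First I would treat the recurrence \eqref{112l1e1}, which governs the columns $m\geq 2$: multiplying by $x^ny^{m-1}$ and summing over $n\geq 3$ and $2\leq m\leq n$, the left-hand side rebuilds $C(x,y)$ with the low-order terms ($n\leq 2$, together with the entire $m=1$ column) removed. On the right, each of the three terms $c_{n-1,m}$, $c_{n-1,m-1}$ and $(p-1)c_{n-2,m-1}$ is a shifted copy of the array, so after reindexing they contribute $x$, $xy$ and $(p-1)x^2y$ times $C(x,y)$, respectively, up to the same boundary corrections. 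This produces the factor $x(1+y+(p-1)xy)C(x,y)$ appearing on the right of the displayed computation.

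Next I would handle the $m=1$ recurrence \eqref{112l1e2}, multiplied by $x^n$ and summed over $n\geq 3$. The leading term $c_{n-1,1}$ simply feeds the $m=1$ column back in. The decisive point is to recognize the two inner sums as coefficients of $U$ and $V$: writing $u_N=\sum_j(j-1)c_{N,j}$ and $v_N=\sum_j c_{N,j}$, we have $U=\frac{\partial}{\partial y}C(x,y)\mid_{y=1}=\sum_N u_Nx^N$ and $V=C(x,1)=\sum_N v_Nx^N$, since the factor $(m-1)$ produced by the $y$-derivative at $y=1$ is exactly the weight appearing in $u_N$. Thus $r\sum_{j=2}^{n-2}(j-1)c_{n-2,j}$ contributes $rx^2U$. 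For the $(j-2+q)$ sums I would substitute $i=j-1$ so that $j-2=i-1$, turning $\sum(j-2)c_{\cdot,j-1}$ into the $u$-coefficients $u_{n-2}$ and $u_{n-3}$, and $\sum c_{\cdot,j-1}$ into the $v$-coefficients $v_{n-2}$ and $v_{n-3}$. Summing against $x^n$ then gives $x^2(1+(p-1)x)U$ from the $(j-2)$ part and $qx^2(1+(p-1)x)V$ from the $q$ part.

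Finally I would add the two resulting generating-function equations, combine the $U$-contributions $rx^2U$ and $x^2(1+(p-1)x)U$ into $x^2(r+1+(p-1)x)U$, cancel the matching boundary terms (in particular the two occurrences of $x^2(1+y)$), and collect the terms proportional to $C(x,y)$ onto the left. Rearranging yields exactly \eqref{112l2e1}. The one place demanding care is the bookkeeping of the initial data $c_{1,1}=c_{2,1}=c_{2,2}=1$ and the precise truncation ranges of the nested sums: because the recurrences hold only for $n\geq 3$, one must verify that the finite inner sums align with the full products $xC$, $xyC$, and so on, so that the low-order discrepancies cancel against the explicit term $x$ on the right. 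Once the index shifts are pinned down, the identification of the inner sums with $U$ and $V$ renders the remaining algebra routine.
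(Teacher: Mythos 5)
Your proposal is correct and follows essentially the same route as the paper: multiply \eqref{112l1e1} by $x^ny^{m-1}$ and \eqref{112l1e2} by $x^n$, sum over the stated ranges, add, and identify the inner sums (after the shift $j\mapsto j-1$) with the coefficients of $U=\frac{\partial}{\partial y}C(x,y)\mid_{y=1}$ and $V=C(x,1)$, with the boundary terms $-x^2(1+y)$ cancelling on both sides. The index bookkeeping you flag as the delicate point is exactly what the paper's displayed computation carries out, and your identifications $x^2(r+1+(p-1)x)U$ and $qx^2(1+(p-1)x)V$ agree with it.
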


Solving the functional equation \eqref{112l2e1} yields the following result.

\begin{theorem}\label{Cxyth}
The generating function that enumerates the members of $\mathcal{F}$ jointly according to length, $\text{trun}-1$, $\#112$, $\#121$ and $\#221$  (marked by $x$, $y$, $p$, $q$ and $r$, respectively) is given by
\begin{equation}\label{Cxythe1}
C(x,y)=\frac{x(1-2x-(p-1)x^2)^2}{(1-x(1+y)-(p-1)x^2y)\alpha},
\end{equation}
where $$\alpha=1-4x-(2p+q-6)x^2+(4p+3q-r-pq-5)x^3+(p-1)(p+3q-r-3)x^4+(p-1)^2(q-1)x^5.$$
\end{theorem}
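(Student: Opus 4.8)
The plan is to solve the functional equation \eqref{112l2e1} of Lemma~\ref{112l2} for $C(x,y)$ by exploiting the fact that its right-hand side depends on $x$ alone, exactly in the spirit of the derivations for \eqref{lemA(x,y)e1} and \eqref{122l1e1}. Write $D=D(x)=1-2x-(p-1)x^2$ for the value at $y=1$ of the coefficient of $C(x,y)$ on the left side of \eqref{112l2e1}, and set $E=E(x)=1+(p-1)x$. I would first extract two scalar (that is, $y$-free) relations linking the two unknowns $U=\frac{\partial}{\partial y}C(x,y)\mid_{y=1}$ and $V=C(x,1)$. Setting $y=1$ in \eqref{112l2e1} gives the first:
$$DV=x+x^2(r+1+(p-1)x)U+qx^2EV.$$
For the second, I would differentiate both sides of \eqref{112l2e1} with respect to $y$ and then set $y=1$; since the right-hand side of \eqref{112l2e1} is independent of $y$, its derivative vanishes, and applying the product rule to the left-hand side leaves only
$$DU=x(1+(p-1)x)V=xEV.$$

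This reduces the problem to a $2\times2$ linear system over the field of rational functions in $x$ with parameters $p,q,r$. From the second relation one solves $U=\frac{xE}{D}V$ and substitutes into the first; clearing the denominator $D$ then isolates
$$V=\frac{xD}{\alpha},\qquad \alpha=D^2-qx^2ED-x^3(r+1+(p-1)x)E.$$
The key structural observation, which makes the final step immediate, is that the entire right-hand side of \eqref{112l2e1} equals its value at $y=1$, namely $DV$; hence the numerator of $C(x,y)$ is simply $DV=xD^2/\alpha$, and dividing by the $y$-dependent coefficient $1-x(1+y)-(p-1)x^2y$ yields
$$C(x,y)=\frac{xD^2}{(1-x(1+y)-(p-1)x^2y)\,\alpha},$$
which is precisely \eqref{Cxythe1} once one records $D^2=(1-2x-(p-1)x^2)^2$.

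The only remaining work, and the one genuinely tedious step, is to check that $\alpha=D^2-qx^2ED-x^3(r+1+(p-1)x)E$ expands to the degree-five polynomial displayed in the statement. I expect to handle this by setting $a=p-1$ and expanding the three pieces $D^2=1-4x+(4-2a)x^2+4ax^3+a^2x^4$, $(1+ax)(1-2x-ax^2)=1+(a-2)x-3ax^2-a^2x^3$ and $(r+1+ax)(1+ax)=(r+1)+a(r+2)x+a^2x^2$, then collecting coefficients of $x^k$ for $0\le k\le5$; resubstituting $a=p-1$ should reproduce the stated coefficients $-(2p+q-6)$, $4p+3q-r-pq-5$, $(p-1)(p+3q-r-3)$ and $(p-1)^2(q-1)$ of $x^2,x^3,x^4,x^5$. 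Thus the main obstacle is purely bookkeeping rather than conceptual: once the two scalar relations are obtained the formula is forced, and care is needed only in differentiating the left-hand product correctly and in recognizing that the numerator of $C(x,y)$ is exactly $DV$.
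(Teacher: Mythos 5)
Your proposal is correct, and it reaches the formula by a genuinely leaner route than the paper. The paper derives the two scalar relations between $U$ and $V$ by going back to the recurrences \eqref{112l1e1}--\eqref{112l1e2} for $c_{n,m}$, forming the sequences $u_n=\sum_m(m-1)c_{n,m}$ and $v_n=\sum_m c_{n,m}$, establishing the coupled recurrences \eqref{Cxythe2}--\eqref{Cxythe3}, and translating these into generating-function identities; you instead extract both relations directly from the functional equation \eqref{112l2e1}, the first by setting $y=1$ and the second by differentiating in $y$ at $y=1$ (legitimate coefficientwise on formal power series whose coefficients are polynomials in $y$, and the $y$-independence of the right side makes its derivative vanish). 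Your resulting relations $DU=xEV$ and $(D-qx^2E)V=x+x^2(r+1+(p-1)x)U$ are literally the same pair the paper obtains, and your $\alpha=D^2-qx^2ED-x^3(r+1+(p-1)x)E$ agrees with the paper's denominator $D$ after the substitution $1-2x-(p+q-1)x^2-(p-1)qx^3=D-qx^2E$; the coefficient bookkeeping you defer does check out against the stated quintic. Your final step is also cleaner: observing that the $y$-free right side of \eqref{112l2e1} must equal its value $DV=xD^2/\alpha$ at $y=1$ immediately gives the numerator $x(1-2x-(p-1)x^2)^2$, whereas the paper arrives at the same numerator by explicitly verifying the polynomial identity $x\alpha+x^3\beta=x(1-2x-(p-1)x^2)^2$ in \eqref{Cxythe4}; your observation in effect explains why that identity holds. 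What the paper's longer route buys is that the recurrences for $u_n$ and $v_n$ are reusable and combinatorially interpretable (and the same machinery is applied to the other four distributions), while your argument shows that, for this theorem, Lemma \ref{112l2} alone already forces the answer.
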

\begin{proof}
Consider the sequences $u_n$ and $v_n$ defined above, where here they are associated with the array $c_{n,m}$. Using \eqref{112l1e1} and \eqref{112l1e2}, one can establish the following system of recurrences for $n\geq3$:
\begin{align}
u_n&=2u_{n-1}+v_{n-1}+(p-1)(u_{n-2}+v_{n-2}),\label{Cxythe2}\\
v_n&=2v_{n-1}+(r+1)u_{n-2}+(p+q-1)v_{n-2}+(p-1)(u_{n-3}+qv_{n-3}),\label{Cxythe3}
\end{align}
with initial values $u_0=v_0=u_1=0$, $v_1=u_2=1$ and $v_2=2$.  By \eqref{Cxythe2} and \eqref{Cxythe3}, we have
\begin{align*}
U-x^2&=2xU+x(V-x)+(p-1)x^2(U+V),\\
V-x-2x^2&=2x(V-x)+(r+1)x^2U+(p+q-1)x^2V+(p-1)x^3(U+qV),
\end{align*}
and hence
\begin{align*}
U&=\frac{x(1+(p-1)x)}{1-2x-(p-1)x^2}V, \qquad \qquad V=\frac{x+x^2(r+1+(p-1)x)U}{1-2x-(p+q-1)x^2-(p-1)qx^3}.
\end{align*}
Solving the preceding system for $U$ and $V$ gives
$$U=\frac{x^2(1+(p-1)x)}{D}, \qquad \qquad V=\frac{x(1-2x-(p-1)x^2)}{D},$$
where $$D=(1-2x-(p-1)x^2)(1-2x-(p+q-1)x^2-(p-1)qx^3)-x^3(1+(p-1)x)(r+1+(p-1)x).$$
A computation shows $D=\alpha$, where $\alpha$ is as stated above.

By \eqref{112l2e1}, we then have
\begin{align}
(1-x(1+y)-(p-1)x^2y)C(x,y)&=x+\frac{x^3(1+(p-1)x)(q(1-2x-(p-1)x^2)+x(r+1+(p-1)x))}{\alpha}\notag\\
&=\frac{x\alpha+x^3\beta}{\alpha},\label{Cxythe4}
\end{align}
where $\beta=q+(pq+r-3q+1)x+(p-1)(r-3q+2)x^2-(p-1)^2(q-1)x^3$.
Formula \eqref{Cxythe1} now follows from \eqref{Cxythe4}, upon noting
$$x\alpha+x^3\beta=x-4x^2-2(p-3)x^3+4(p-1)x^4+(p-1)^2x^5=x(1-2x-(p-1)x^2)^2.$$
\end{proof}

Comparing the $q=r=1$ cases of $B(x,1)$ and $C(x,1)$, and also the $p=r=1$ case of $B(x,1)$ with the $p=q=1$ case of $C(x,1)$, yields the following equivalences.

\begin{corollary}\label{Cxythc1}
We have $112\approx122$ and $221\approx211$ as subwords on $\mathcal{F}_n$ for all $n \geq 1$.
\end{corollary}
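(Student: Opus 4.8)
The plan is to prove Corollary \ref{Cxythc1} by specializing and directly comparing the generating functions $B(x,y)$ and $C(x,y)$ from Theorems \ref{Bxyth} and \ref{Cxyth} at $y=1$. Since the claim concerns identical distributions on $\mathcal{F}_n$ for \emph{all} $n$, it suffices to verify equality of the corresponding single-pattern generating functions $F_\tau(x;q)$, which are obtained from $B(x,1)$ and $C(x,1)$ by setting all but one of the parameters $\{p,q,r\}$ equal to unity. Recall that $B(x,1;1,1,1)=C(x,1;1,1,1)=\frac{x(1-2x)}{(1-x)(1-3x)}$ counts $|\mathcal{F}_n|$, so both specializations live in the correct ambient family; the content is that turning on a single pattern variable yields matching rational functions.

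For the equivalence $112\approx122$, I would first extract $F_{122}(x;p)=B(x,1;p,1,1)$ from \eqref{Bxythe1}. Setting $y=1$ and $q=r=1$ in \eqref{Bxythe1}, the factor $(1-(r+y)x-(p-r)x^2y)$ in the denominator becomes $1-2x-(p-1)x^2$, and the bracketed factor simplifies since several $(q-r)$ and $(p-r)$ terms collapse. Next I would extract $F_{112}(x;p)=C(x,1;p,1,1)$ from \eqref{Cxythe1}. Taking $y=1$ and $q=r=1$ there, the first denominator factor is again $1-2x-(p-1)x^2$, while the polynomial $\alpha$ loses all its $(q-1)$ and $(r-1)$ contributions, reducing to a degree-three polynomial in $x$. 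The hard part will be showing that these two rational functions are literally identical as elements of $\mathbb{Q}(p)(x)$: after clearing the common factor $1-2x-(p-1)x^2$ from each, one must check that the surviving numerator/denominator pairs agree, which amounts to verifying a polynomial identity in $x$ and $p$. I expect both to collapse to the Class-5 entry of Table \ref{tab1}, namely $\frac{x(1-2x+(1-p)x^2)}{1-4x+(5-2p)x^2-3(1-p)x^3+(1-p)^2x^4}$ (with $p$ in place of $q$), so the cross-multiplication should close after routine but careful expansion.

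For the equivalence $221\approx211$, the argument is parallel but uses different specializations of the two theorems. I would extract $F_{211}(x;q)=B(x,1;1,q,1)$ by setting $y=1,p=r=1$ in \eqref{Bxythe1}, and $F_{221}(x;r)=C(x,1;1,1,r)$ by setting $y=1,p=q=1$ in \eqref{Cxythe1}; here the variable names $q$ in $B$ and $r$ in $C$ both play the role of the single active pattern parameter, so I would rename the active variable to a common symbol before comparing. In the $C$ specialization, $p=q=1$ kills every $(p-1)$ factor in $\alpha$ and leaves $\alpha=1-4x+3x^2+(1-r)x^3$, matching the Class-8 denominator of Table \ref{tab1}; the $B$ specialization should reduce to the same shape. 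Again the crux is the final polynomial identity after clearing denominators, and I anticipate both reduce to $\frac{x(1-2x)}{1-4x+3x^2+(1-q)x^3}$. Throughout, the main obstacle is purely the bookkeeping of the several multi-term numerators in \eqref{Bxythe1} and \eqref{Cxythe1}; no new combinatorial idea is needed, since the structural work has already been done in proving the two theorems, and a bijective explanation of these equivalences is deferred to a later section.
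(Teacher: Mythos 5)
Your proposal is correct and is exactly the paper's argument: the corollary is deduced by comparing the $q=r=1$ specializations of $B(x,1)$ and $C(x,1)$ (giving $F_{122}$ and $F_{112}$) and the $p=r=1$ case of $B(x,1)$ with the $p=q=1$ case of $C(x,1)$ (giving $F_{211}$ and $F_{221}$), with both pairs reducing to the Class~5 and Class~8 entries of Table~\ref{tab1}, respectively. The bijective explanations appear later in the paper, just as you anticipate.
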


Further, it is seen from the generating functions that both equivalences above respect trun.  Using the $p=r=1$ case of $C(x,1)$, one can obtain the following formulas.

\begin{corollary}\label{Cxythc2}
We have
$$\text{tot}_n(121)=\frac{(n+1)3^{n-3}+n-3}{4}, \qquad n \geq 2,$$
and
$$f_n(121)=\sum_{j=1}^n\sum_{p=0}^{\lfloor \frac{j-1}{2}\rfloor}\sum_{r=0}^p\binom{p}{r}\binom{j-p-1}{2p-r}\binom{n-r-1}{j-1}, \qquad n \geq 1.$$
\end{corollary}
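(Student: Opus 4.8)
The plan is to derive both identities from the single-variable generating function obtained by specializing the formula for $C(x,1)$ furnished by Theorem~\ref{Cxyth} at $p=r=1$. A direct substitution collapses the quintic factor $\alpha$ and cancels a factor $(1-2x)$, yielding
\[
F_{121}(x;q)=C(x,1;1,q,1)=\frac{x(1-2x)}{1-4x+(4-q)x^2-2(1-q)x^3},
\]
in agreement with the entry for $\tau=121$ in Table~\ref{tab1}, where $q$ marks $\#121$. Both quantities in the corollary are then read off from this rational function: the total by differentiation in $q$, and the avoider count by the specialization $q=0$.

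For $\text{tot}_n(121)$ I would differentiate $F_{121}(x;q)$ with respect to $q$ and set $q=1$. Since the numerator is free of $q$ and $\frac{\partial}{\partial q}\bigl(1-4x+(4-q)x^2-2(1-q)x^3\bigr)=-x^2(1-2x)$, while the denominator at $q=1$ factors as $(1-x)(1-3x)$, one obtains
\[
\left.\frac{\partial}{\partial q}F_{121}(x;q)\right|_{q=1}=\frac{x^3(1-2x)^2}{(1-x)^2(1-3x)^2}.
\]
A partial-fraction decomposition splits this into contributions from the double poles at $x=1/3$ and $x=1$, whose coefficients grow like $(n+1)3^{n}$ and $n$, respectively; extracting $[x^n]$ and collecting terms gives $\frac{(n+1)3^{n-3}+n-3}{4}$ for $n\geq2$. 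This step is routine, and I would check it against $\text{tot}_2(121)=0$ and $\text{tot}_3(121)=1$ computed directly from $\mathcal{F}_2$ and $\mathcal{F}_3$.

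For $f_n(121)$, setting $q=0$ gives the generating function $\frac{x(1-2x)}{1-4x+4x^2-2x^3}$, but I expect the clean triple-sum form to be proved combinatorially, in the spirit of Corollary~\ref{Bxyc2}. The plan is to build each member of $\mathcal{F}_n(121)$ by augmenting the runs of a level-free skeleton $\rho\in\mathcal{F}_j(11)$, distributing $n-j$ extra letters among its $j$ runs. The crux is the observation that, because ascents in a Catalan word are always by one, a $121$ occurs in the augmented word exactly at a skeleton \emph{short peak}, i.e., a peak $\rho_{i-1}<\rho_i>\rho_{i+1}$ whose descent is also of size one (so $\rho_{i+1}=\rho_{i-1}$), and only when the length-one run at $\rho_i$ is left unaugmented. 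Hence $\pi$ avoids $121$ precisely when every short peak of its skeleton receives at least one extra letter. Via the bijection of Lemma~\ref{11avoidlem}, the descents of $\rho$ correspond to the runs of $1$ in the associated binary word, with descent size equal to run length, so short peaks correspond to isolated $1$'s. Writing $p$ for the number of descents (peaks) and $r$ for the number of short peaks, a stars-and-bars count of the underlying compositions shows there are $\binom{p}{r}\binom{j-p-1}{2p-r}$ skeletons of length $j$ with these statistics, while the number of admissible augmentations is the number of ways to distribute the $n-j$ extra letters over the $j$ runs with each of the $r$ short-peak runs receiving at least one, namely $\binom{n-r-1}{j-1}$. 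Summing over $j$, $p$ and $r$ then yields the stated formula.

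The main obstacle is the combinatorial half: establishing carefully that the only way a $121$ can be created is at an unaugmented short peak (in particular that augmenting never produces a new occurrence elsewhere, which rests on the step-one nature of ascents), and then transporting the short-peak statistic through the map of Lemma~\ref{11avoidlem} so that the two binomial factors emerge from clean composition counts. As a consistency check, I would note that summing the skeleton factor $\binom{p}{r}\binom{j-p-1}{2p-r}$ over $r$ collapses by the Vandermonde convolution to $\binom{j-1}{2p}$, recovering the descent enumeration (with $r\to p$) used for $f_n(122)$, and I would confirm that small cases such as $f_4(121)=10$ agree with the direct expansion of the $q=0$ generating function.
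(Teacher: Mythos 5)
Your proposal is correct and follows essentially the paper's own route: the total is read off from the $p=r=1$ specialization $F_{121}(x;q)=C(x,1;1,q,1)$ of Theorem~\ref{Cxyth} by differentiating at $q=1$, and the avoidance formula is established by the same skeleton-augmentation argument the paper gives in its combinatorial-proofs subsection, your ``short peaks'' being exactly the paper's occurrences of $121$ in a level-free skeleton, i.e., the length-one runs of $1$ in the associated binary word, with the identical counts $\binom{p}{r}\binom{j-p-1}{2p-r}$ and $\binom{n-r-1}{j-1}$. The only cosmetic difference is that for $\text{tot}_n(121)$ the paper's detailed argument is combinatorial (comparing marked occurrences of $121$ and $212$ via \eqref{Gntau}) rather than your partial-fraction extraction, but both rest on the same generating function.
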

\noindent   A combinatorial proof of the last two corollaries, along with some further results, is given in the final subsection.

By setting one of $\{p,q,r\}$ equal to zero, we obtain the joint distribution of two of the corresponding subwords on the subset of $\mathcal{F}_n$ whose members avoid the other pattern.  For example, setting $p=0$ in \eqref{Cxythe1} implies that the generating function for the joint distribution of the 121 and 221 subwords on the set $\mathcal{F}_n(112)$ for $n\geq 1$ is given by
$$C(x,1;0,p,q)=\frac{x(1-x)}{1-3x-(p-3)x^2+(2p-q-2)x^3-(p-1)x^4}.$$
Setting $p$ or $q$ equal to unity, one obtains the generating functions for the corresponding univariate distributions of 221 and 121 respectively on $\mathcal{F}_n(112)$.  Similar remarks apply to the other joint distributions found in this paper.  The problem of considering occurrences of one pattern while avoiding another is one that has been considered previously on other discrete structures, especially permutations (see, e.g., \cite{TM,Rob}).

\subsection{Distribution of $\#123$, $\#231$ and $\#221$}\label{ssec3.2}

Let $d_{n,m}=d_{n,m}(p,q,r)$ be given by
$$d_{n,m}=\sum_{\pi \in \mathcal{F}_{n,m}}p^{\#123(\pi)}q^{\#231(\pi)}r^{\#221(\pi)}, \qquad 1 \leq m \leq n.$$
The array $d_{n,m}$ is given recursively as follows.

\begin{lemma}\label{123l1}
If $n \geq 3$, then
\begin{align}
d_{n,m}&=d_{n-1,m}+pd_{n-1,m-1}+(1-p)d_{n-2,m-1}, \qquad 3 \leq m \leq n,\label{123l1e1}\\
d_{n,2}&=d_{n-1,2}+d_{n-1,1},\label{123l1e2}\\
d_{n,1}&=d_{n-1,1}+(1-p)(d_{n-2,1}-d_{n-3,1})+(pq+r)\sum_{j=2}^{n-2}(j-1)d_{n-2,j}+(1-p)q\sum_{j=2}^{n-3}(j-1)d_{n-3,j}\notag\\
       &\quad+\sum_{j=1}^{n-2}(pd_{n-2,j}+(1-p)d_{n-3,j}),\label{123l1e3}
\end{align}
with $d_{1,1}=d_{2,1}=d_{2,2}=1$ and $d_{0,m}=0$ for all $m \geq1$.
\end{lemma}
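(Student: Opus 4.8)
The plan is to mirror the combinatorial argument used for the array $c_{n,m}$ in Lemma \ref{112l1}, classifying each $\pi \in \mathcal{F}_{n,m}$ according to the relative sizes of its last few letters and relating its weight $p^{\#123(\pi)}q^{\#231(\pi)}r^{\#221(\pi)}$ to weights recorded by $d_{n-1,\cdot}$, $d_{n-2,\cdot}$ and $d_{n-3,\cdot}$. The structural facts I will invoke repeatedly are that, in a Catalan word, an ascent $\pi_i < \pi_{i+1}$ forces $\pi_{i+1} = \pi_i + 1$; that $\text{trun}(\pi) \ge 2$ precludes $\pi$ ending in a descent; that appending a level to the end of a word changes none of the three pattern counts and fixes $\text{trun}$; and that appending an ascent fixes $\#231$ and $\#221$ while increasing $\text{trun}$ by one, creating a new $123$ precisely when the word already ends in an ascent.

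For \eqref{123l1e1}, where $m \ge 3$, I split on whether the last two letters of $\pi$ form a level or an ascent. A terminal level contributes $d_{n-1,m}$, since deleting it yields an arbitrary member of $\mathcal{F}_{n-1,m}$ of the same weight. A terminal ascent arises by appending the top to some $\pi' \in \mathcal{F}_{n-1,m-1}$; as $m - 1 \ge 2$, such $\pi'$ ends in a level or an ascent, and a new $123$ is introduced exactly in the latter case. Writing the level-ending members of $\mathcal{F}_{n-1,m-1}$ as $d_{n-2,m-1}$ and the ascent-ending ones as $d_{n-1,m-1} - d_{n-2,m-1}$, the ascent case contributes $p(d_{n-1,m-1}-d_{n-2,m-1}) + d_{n-2,m-1} = p\,d_{n-1,m-1}+(1-p)d_{n-2,m-1}$, giving \eqref{123l1e1}. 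Equation \eqref{123l1e2} is the $m = 2$ analogue: the level case again gives $d_{n-1,2}$, while a terminal ascent now comes from $\pi' \in \mathcal{F}_{n-1,1}$, which ends in a flat run and hence never in the ascent needed to create a $123$, so this case contributes simply $d_{n-1,1}$.

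The substantive case is \eqref{123l1e3}, where $\text{trun}(\pi) = 1$. A terminal level again yields $d_{n-1,1}$. Otherwise $\pi = \pi'\ell i$ ends in a descent with $\ell > i$; setting $j = \text{trun}(\pi'\ell) \ge 2$, the flattened condition that descent bottoms be nondecreasing forces $i \in [\ell - j + 1, \ell - 1]$, giving $j - 1$ admissible values of $i$. I split on whether $\pi'\ell$ ends in a level $\ell\ell$ or an ascent $(\ell-1)\ell$. In the level subcase every $i$ creates a $221$ and no $231$ or $123$, and deleting the final $\ell$ identifies these words with $\mathcal{F}_{n-2,j}$, contributing $r\sum_{j=2}^{n-2}(j-1)d_{n-2,j}$. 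In the ascent subcase, appending $i = \ell - 1$ creates only an untracked $121$ while each smaller admissible $i$ creates a $231$, yielding the factor $(j-2)q + 1$; I then evaluate the weight of the ascent-ending words $\pi'\ell$ of trun $j$ by writing $\pi' = \sigma$ with $\sigma \in \mathcal{F}_{n-2,j-1}$ ending in $\ell - 1$, obtaining $p\,d_{n-2,j-1} + (1-p)d_{n-3,j-1}$ according to whether or not a $123$ is created upon appending $\ell$.

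I expect the main obstacle to be the boundary value $j = 2$, i.e.\ $\text{trun}(\sigma) = 1$. There the dichotomy for $\sigma$ of "ends in level" versus "does not end in level" no longer pits an ascent against a level: the second alternative is a descent into $\ell - 1$, whose extension by $\ell$ produces a $312$-type string rather than a $123$, so its weight factor is $1$ rather than $p$. Comparing the correct value $d_{n-2,1}$ at $j = 2$ with what the uniform formula $p\,d_{n-2,j-1}+(1-p)d_{n-3,j-1}$ would give produces exactly the correction term $(1-p)(d_{n-2,1}-d_{n-3,1})$. Re-indexing the ascent-subcase sum by $k = j - 1$, separating the factor $(k-1)q + 1$, and merging the $r$- and $pq$-contributions of the two subcases then assembles all the terms of \eqref{123l1e3}; the stated initial conditions are verified directly from $\mathcal{F}_1$ and $\mathcal{F}_2$.
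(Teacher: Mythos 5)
Your proof is correct and follows essentially the same decomposition as the paper's: classify $\pi$ by its final level/ascent/descent, then by whether the prefix ends in $\ell\ell$ or $(\ell-1)\ell$, then by the trun value of that prefix. The only (cosmetic) difference is at the boundary $j=2$: the paper treats $\text{trun}(\pi'(\ell-1))=1$ as a separate case contributing $d_{n-2,1}$ outright, whereas you sum a uniform formula over all $j\geq 2$ and add the correction $(1-p)(d_{n-2,1}-d_{n-3,1})$, which you correctly identify (a descent into $\ell-1$ extended by $\ell$ yields a $312$/$212$-type string, not a $123$) and which reassembles to the same expression \eqref{123l1e3}.
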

\begin{proof}
Suppose $\rho \in \mathcal{F}_{n,m}$ ends in $a,b$, where $n \geq 3$ and $m \in [n]$.  If $a=b$, then there are $d_{n-1,m}$ possibilities for $\rho$, regardless of $m$. So assume henceforth $a \neq b$.  If $m \geq 2$, then $\rho$ cannot end in a descent, and thus $b=a+1$. If $m=2$, then there are $d_{n-1,1}$ members of $\mathcal{F}_{n,2}$ ending in $a,a+1$ for some $a\geq 1$, as appending $a+1$ to a member of $\mathcal{F}_{n-1,1}$ ending in $a$ is seen not to introduce an occurrence of one of $\{123,231,221\}$, while at the same time increasing trun from one to two.  Combining with the prior case above yields \eqref{123l1e2}.  On other hand, if $m \geq 3$, first note that there are $d_{n-2,m-1}$ possibilities for $\rho$ of the form $\rho=\rho'aa(a+1)$.  Further, the members of $\mathcal{F}_{n-1,m-1}$ whose last two letters form an ascent have weight $d_{n-1,m-1}-d_{n-2,m-1}$, by subtraction as $m\geq 3$, upon excluding members ending in a level.  This implies that the weight of $\rho \in \mathcal{F}_{n,m}$ of the form $\rho=\rho'(a-1)a(a+1)$ is given by $p(d_{n-1,m-1}-d_{n-2,m-1})$, where the extra factor of $p$ accounts for the 123 witnessed by the final three letters of $\rho$.  Combining with the previous cases yields \eqref{123l1e1}.

For \eqref{123l1e3}, we may assume $n \geq 4$, as both sides of \eqref{123l1e3} are seen to equal two when $n=3$.  Note that a member of $\mathcal{F}_{n,1}$ whose last two letters are not equal must end in a descent.  Suppose $\pi=\pi'x\ell i \in \mathcal{F}_{n,1}$, where $n \geq 4$, $\ell \geq 2$, $i \in [\ell-1]$ and $x$ denotes the antepenultimate letter of $\pi$.  Note $\pi \in \mathcal{F}$ implies $x\in\{\ell-1,\ell\}$.  If $x=\ell$, then we get a weight of $r\sum_{j=2}^{n-2}(j-1)d_{n-2,j}$ for such $\pi$, upon considering all possible values of $\text{trun}(\pi'\ell)$, where the factor of $r$ accounts for the occurrence of 221 arising when $\ell,i$ is appended to $\pi'\ell$ in forming $\pi$.  So assume $x=\ell-1$ and we consider cases on $u:=\text{trun}(\pi'(\ell-1))$.  If $u=1$, then there are $d_{n-2,1}$ possibilities for $\pi'(\ell-1)$, and hence for $\pi=\pi'(\ell-1)\ell i$ as well, since $u=1$ implies $i=\ell-1$ and thus none of the three patterns are introduced when $\ell,\ell-1$ is appended to $\pi'(\ell-1)$.

So assume $u \geq 2$, which implies $\ell \geq 3$. Let $u=j-1$ for some $j\in[3,n-1]$.  By prior arguments, there are $p(d_{n-2,j-1}-d_{n-3,j-1})+d_{n-3,j-1}$  possibilities for $\pi'(\ell-1)\ell$, as appending $\ell$ to $\pi'(\ell-1)$ when $\pi'$ ends in $\ell-2$ introduces a 123 and accounts for the extra factor of $p$. Once $\pi'(\ell-1)\ell$ is formed, we then append $i \in [\ell-j+1,\ell-1]$ so as to obtain $\pi$.  If $i=\ell-1$, then it is seen that no occurrences of any of the three patterns arise when $i$ is appended, whereas if $i \in [\ell-j+1,\ell-2]$, then a 231 arises.  Thus, the letter $i$ is accounted for by $q(j-2)+1$, whence there are $(q(j-2)+1)(pd_{n-2,j-1}+(1-p)d_{n-3,j-1})$ possibilities for $\pi=\pi'(\ell-1)\ell i$ when $\text{trun}(\pi'(\ell-1))=j-1$.  Considering all possible $j$ then yields a weight of
$$\sum_{j=3}^{n-1}(q(j-2)+1)(pd_{n-2,j-1}+(1-p)d_{n-3,j-1})$$
for $\pi \in \mathcal{F}_{n,1}$ of the form $\pi'(\ell-1)\ell i$ where $i<\ell$ and $\text{trun}(\pi'(\ell-1))\geq 2$.  Combining with the prior cases, we get
\begin{align*}
d_{n,1}&=d_{n-1,1}+d_{n-2,1}+r\sum_{j=2}^{n-2}(j-1)d_{n-2,j}+\sum_{j=3}^{n-1}(q(j-2)+1)(pd_{n-2,j-1}+(1-p)d_{n-3,j-1})\\
&=d_{n-1,1}+d_{n-2,1}+r\sum_{j=2}^{n-2}(j-1)d_{n-2,j}+pq\sum_{j=2}^{n-2}(j-1)d_{n-2,j}+(1-p)q\sum_{j=2}^{n-3}(j-1)d_{n-3,j}\\
&\quad+p\sum_{j=2}^{n-2}d_{n-2,j}+(1-p)\sum_{j=2}^{n-3}d_{n-3,j},
\end{align*}
which may be rewritten to give \eqref{123l1e3}.
\end{proof}

Let $D(x,y)=\sum_{n\geq1}\sum_{m=1}^nd_{n,m}x^ny^{m-1}$.  Multiply both sides of \eqref{123l1e1} by $x^ny^{m-1}$ and sum over all $n \geq 3$ and $3 \leq m \leq n$.  Then multiply both sides of \eqref{123l1e2} and \eqref{123l1e3} by $x^ny$ and $x^n$, respectively, and sum over $n \geq 3$.  Adding the three resulting equations, we get
\begin{align*}
&D(x,y)-x-x^2(1+y)\\
&=\sum_{n\geq3}\sum_{m=1}^{n-1}d_{n-1,m}x^ny^{m-1}+p\sum_{n\geq3}\sum_{m=2}^{n}d_{n-1,m-1}x^ny^{m-1}+(1-p)\sum_{n\geq3}\sum_{m=2}^{n-1}d_{n-2,m-1}x^ny^{m-1}\\
&\quad+(1-p)y\left(\sum_{n\geq3}d_{n-1,1}x^n-\sum_{n\geq3}d_{n-2,1}x^n\right)+(1-p)\left(\sum_{n\geq3}d_{n-2,1}x^n-\sum_{n\geq4}d_{n-3,1}x^n\right)\\
&\quad+(pq+r)\sum_{n\geq4}\sum_{j=2}^{n-2}(j-1)d_{n-2,j}x^n+(1-p)q\sum_{n\geq5}\sum_{j=2}^{n-3}(j-1)d_{n-3,j}x^n+p\sum_{n\geq3}\sum_{j=1}^{n-2}d_{n-2,j}x^n\\
&\quad+(1-p)\sum_{n\geq4}\sum_{j=1}^{n-3}d_{n-3,j}x^n\\
&=x(1+py+(1-p)xy)D(x,y)-x^2(1+py)+(1-p)x(1-x)yD(x,0)-(1-p)x^2y\\
&\quad+(1-p)x^2(1-x)D(x,0)+x^2(pq+r+(1-p)qx)\frac{\partial}{\partial y}D(x,y)\mid_{y=1}+x^2(p+(1-p)x)D(x,1),
\end{align*}
which may be rewritten to give the following result.

\begin{lemma}\label{123l2}
We have
\begin{align}
(1-x(1+py)-(1-p)x^2y)D(x,y)&=x+x^2(pq+r+(1-p)qx)U+x^2(p+(1-p)x)V\notag\\
&\quad+(1-p)x(1-x)(x+y)W,\label{123l2e1}
\end{align}
where $U=\frac{\partial}{\partial y}D(x,y)\mid_{y=1}$, $V=D(x,1)$ and $W=D(x,0)$.
\end{lemma}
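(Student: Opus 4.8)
The plan is to derive the functional equation \eqref{123l2e1} directly from the three recurrences of Lemma \ref{123l1} by the standard generating-function transfer: multiply each recurrence by the appropriate monomial, sum over the indicated ranges, and recognize each resulting double sum as one of the objects $D(x,y)$, $U$, $V$ or $W$. Concretely, I would multiply \eqref{123l1e1} by $x^n y^{m-1}$ and sum over $n\geq 3$, $3\leq m\leq n$; multiply \eqref{123l1e2} by $x^n y$ and sum over $n\geq 3$; and multiply \eqref{123l1e3} by $x^n$ and sum over $n\geq 3$. Adding these three identities should reconstruct all of $D(x,y)$ except for the low-order terms $x+x^2(1+y)$ coming from the initial conditions $d_{1,1}=d_{2,1}=d_{2,2}=1$, which is exactly why the left side of the consolidated identity is written as $D(x,y)-x-x^2(1+y)$.

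The heart of the computation is matching each summand on the right of the consolidated identity to a closed-form expression. The generic shift terms $\sum d_{n-1,m}x^n y^{m-1}$, $\sum d_{n-1,m-1}x^n y^{m-1}$ and $\sum d_{n-2,m-1}x^n y^{m-1}$ produce $xD(x,y)$, $pxyD(x,y)$ and $(1-p)x^2yD(x,y)$ respectively, after accounting for the range adjustments (the $m=2$ slice of \eqref{123l1e2} supplies the terms that \eqref{123l1e1} omits because it starts at $m=3$, so the two families splice together cleanly into $D(x,y)$). The key recognitions are: any sum of the form $\sum_n\sum_j (j-1)d_{n-k,j}x^n$ equals $x^k\,\frac{\partial}{\partial y}D(x,y)\mid_{y=1}=x^k U$, since $\sum_j(j-1)d_{N,j}$ is precisely $[x^N]U$; any sum $\sum_n\sum_j d_{n-k,j}x^n$ equals $x^k D(x,1)=x^k V$; and the isolated $d_{n-1,1}$ and $d_{n-2,1}$ columns assemble into $W=D(x,0)=\sum_n d_{n,1}x^n$, which is the source of the $(1-p)x(1-x)(x+y)W$ term. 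I would carry out these four identifications one summand at a time, collect the coefficients of $U$, $V$, $W$ and $D(x,y)$, and move the $xD(x,y)$-type terms to the left to obtain the factor $1-x(1+py)-(1-p)x^2y$ multiplying $D(x,y)$.

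The step I expect to be the main obstacle is bookkeeping the $W$-contributions and the attendant low-order corrections. The column-one recurrence \eqref{123l1e3} contributes the two single-index pieces $(1-p)(d_{n-2,1}-d_{n-3,1})$, and these combine with the $y$-weighted $d_{n-1,1}$ appearing implicitly through \eqref{123l1e2} to give the somewhat delicate coefficient $(1-p)x(1-x)(x+y)$ on $W$; getting the powers of $x$, the factor $(1-x)$, and the split between the $y$-coefficient and the constant-in-$y$ coefficient exactly right requires careful attention to which summations start at $n=3$ versus $n=4$ or $n=5$ (the lower limits on the $j$-sums in \eqref{123l1e3} force these shifts). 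A related subtlety is that $U$, $V$, $W$ here denote $\frac{\partial}{\partial y}D\mid_{y=1}$, $D(x,1)$ and $D(x,0)$ rather than raw array sums, but these coincide with $\sum_N\big(\sum_j(j-1)d_{N,j}\big)x^N$, $\sum_N\big(\sum_j d_{N,j}\big)x^N$ and $\sum_N d_{N,1}x^N$ respectively by definition of $D(x,y)$, so once the double sums are identified the translation is immediate. After collecting terms and verifying that the residual constant and linear-in-$x$ pieces reproduce $x+x^2(1+py)$ and cancel against the $-x-x^2(1+y)$ on the left (here the $+x^2(py-y)$ discrepancy is absorbed by the $(1-p)x^2y$ correction visible in the displayed intermediate computation), the identity rearranges directly into \eqref{123l2e1}, completing the proof.
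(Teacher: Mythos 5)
Your plan is correct and is essentially the paper's own proof: the paper likewise multiplies \eqref{123l1e1}, \eqref{123l1e2} and \eqref{123l1e3} by $x^ny^{m-1}$, $x^ny$ and $x^n$ respectively, sums, and identifies the resulting double sums with $D(x,y)$, $U$, $V$ and $W$, with the $(1-p)x(1-x)(x+y)W$ term arising exactly as you describe from the single-index corrections in the $m=2$ and $m=1$ recurrences. The only caution is that the $m=2$ slice does not splice ``cleanly'' into the generic recurrence --- the discrepancy $(1-p)(d_{n-1,1}-d_{n-2,1})$ is precisely the $y$-weighted part of the $W$ coefficient --- but you account for this correctly later in your argument.
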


To determine $D(x,y)$, we seek explicit formulas for the generating functions $U$, $V$ and $W$ whose coefficients satisfy the following system of recurrences.

\begin{lemma}\label{123l3}
If $n \geq3$, then
\begin{align}
u_n&=(1+p)u_{n-1}+pv_{n-1}+(1-p)(u_{n-2}+v_{n-2}+w_{n-1}-w_{n-2}),\label{123l3e1}\\
v_n&=(1+p)v_{n-1}+w_n-pw_{n-1}+(1-p)(v_{n-2}-w_{n-2}),\label{123l3e2}\\
w_n&=(pq+r)u_{n-2}+(1-p)qu_{n-3}+pv_{n-2}+w_{n-1}+(1-p)(v_{n-3}+w_{n-2}-w_{n-3}),\label{123l3e3}
\end{align}
with initial values $u_0=v_0=w_0=u_1=0$, $v_1=w_1=u_2=w_2=1$ and $v_2=2$.
\end{lemma}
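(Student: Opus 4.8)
The plan is to derive each of the three recurrences directly from the recursions for $d_{n,m}$ established in Lemma \ref{123l1} by applying the appropriate linear functional to both sides: $w_n$ reads off the first column, $v_n$ is the full row sum $\sum_{m=1}^n d_{n,m}$, and $u_n$ is the row sum weighted by $m-1$. In each case the work amounts to substituting \eqref{123l1e1}, \eqref{123l1e2}, \eqref{123l1e3}, reindexing the shifted sums, and collapsing everything back into $u$, $v$, $w$ using their definitions. The initial values $u_0=v_0=w_0=u_1=0$, $v_1=w_1=u_2=w_2=1$, $v_2=2$ then follow at once from the boundary data $d_{1,1}=d_{2,1}=d_{2,2}=1$ and $d_{0,m}=0$.

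For \eqref{123l3e3} I would simply observe that $w_n=d_{n,1}$, so the claim is a verbatim rewriting of \eqref{123l1e3}. Since a term weighted by $j-1$ vanishes when $j=1$, the sums $\sum_{j=2}^{n-2}(j-1)d_{n-2,j}$ and $\sum_{j=2}^{n-3}(j-1)d_{n-3,j}$ equal $u_{n-2}$ and $u_{n-3}$; likewise $\sum_{j=1}^{n-2}d_{n-2,j}=v_{n-2}$ and, using $d_{n-3,j}=0$ for $j>n-3$, $\sum_{j=1}^{n-2}d_{n-3,j}=v_{n-3}$. Replacing $d_{n-1,1},d_{n-2,1},d_{n-3,1}$ by $w_{n-1},w_{n-2},w_{n-3}$ and regrouping the $(1-p)$ terms produces \eqref{123l3e3}.

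For \eqref{123l3e2} I would sum $d_{n,m}$ over $m$, handling the first column via $d_{n,1}=w_n$, the second via \eqref{123l1e2}, and the range $3\le m\le n$ via \eqref{123l1e1}. The shifted sums reindex as $\sum_{m=3}^n d_{n-1,m-1}=v_{n-1}-d_{n-1,1}$ and $\sum_{m=3}^n d_{n-2,m-1}=v_{n-2}-d_{n-2,1}$, while $\sum_{m=3}^n d_{n-1,m}=v_{n-1}-d_{n-1,1}-d_{n-1,2}$; the two $d_{n-1,2}$ contributions (one from the second column, one from the tail sum) cancel, and rewriting $d_{n-1,1},d_{n-2,1}$ as $w_{n-1},w_{n-2}$ gives \eqref{123l3e2}.

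The step requiring the most care is \eqref{123l3e1}, obtained by weighting \eqref{123l1e1} by $m-1$, summing over $3\le m\le n$, and adding the $m=2$ term $d_{n,2}$ separately. The delicate maneuver is the reindexing of the two shifted sums: setting $m'=m-1$ turns $\sum_{m=3}^n(m-1)d_{n-1,m-1}$ into $\sum_{m'=2}^{n-1}m'\,d_{n-1,m'}$, which must then be split via $m'=(m'-1)+1$ to yield $u_{n-1}+(v_{n-1}-d_{n-1,1})$, and analogously for the $d_{n-2,\cdot}$ sum, which gives $u_{n-2}+(v_{n-2}-d_{n-2,1})$; the unshifted sum contributes $u_{n-1}-d_{n-1,2}$. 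After substituting \eqref{123l1e2} for $d_{n,2}$, the $d_{n-1,2}$ terms again cancel, the coefficient of $d_{n-1,1}=w_{n-1}$ collapses to $1-p$, and \eqref{123l3e1} emerges. The only genuine subtlety throughout is the bookkeeping of summation endpoints — in particular the fact that $d_{n-k,j}=0$ once $j$ exceeds $n-k$, which silently closes several sums — together with the repeated cancellation of the second-column terms; beyond this the argument is a routine linear computation with no substantive obstacle.
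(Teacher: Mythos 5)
Your proposal is correct and follows essentially the same route as the paper: apply the three linear functionals to the recursions of Lemma \ref{123l1}, reindex the shifted sums, and collapse everything back into $u$, $v$, $w$, with the initial values read off from the boundary data for $d_{n,m}$. The only cosmetic difference is in \eqref{123l3e2}, where the paper first expands $d_{n,1}$ fully via \eqref{123l1e3} to obtain an intermediate recurrence for $v_n$ and then subtracts \eqref{123l3e3}, whereas you keep the first column unexpanded as $w_n$ and land on \eqref{123l3e2} directly; both computations check out.
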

\begin{proof}
The initial conditions for $0 \leq n \leq 2$ are clear, so assume $n \geq 3$.  By \eqref{123l1e1} and \eqref{123l1e2}, we have
\begin{align*}
u_n&=\sum_{m=2}^n(m-1)d_{n,m}\\
&=d_{n-1,1}+\sum_{m=2}^{n-1}(m-1)d_{n-1,m}+p\sum_{m=3}^n(m-1)d_{n-1,m-1}+(1-p)\sum_{m=3}^{n-1}(m-1)d_{n-2,m-1}\\
&=w_{n-1}+u_{n-1}+p\sum_{m=2}^{n-1}(m-1)d_{n-1,m}+p\sum_{m=2}^{n-1}d_{n-1,m}+(1-p)\sum_{m=2}^{n-2}(m-1)d_{n-2,m}+(1-p)\sum_{m=2}^{n-2}d_{n-2,m}\\
&=w_{n-1}+u_{n-1}+p(u_{n-1}+v_{n-1}-w_{n-1})+(1-p)(u_{n-2}+v_{n-2}-w_{n-2}),
\end{align*}
which implies \eqref{123l3e1}.  Rewriting \eqref{123l1e3} in terms of $u_n$, $v_n$ and $w_n$ yields
$$w_n=w_{n-1}+(1-p)(w_{n-2}-w_{n-3})+(pq+r)u_{n-2}+(1-p)qu_{n-3}+pv_{n-2}+(1-p)v_{n-3},$$
which gives \eqref{123l3e3}.  Finally, summing \eqref{123l1e1} over $3 \leq m \leq n$ for a fixed $n\geq 3$ and adding the resulting equation to the sum of \eqref{123l1e2} and \eqref{123l1e3}, we have
\begin{align}
v_n&=\sum_{m=1}^nd_{n,m}\notag\\
&=v_{n-1}+p(v_{n-1}-w_{n-1})+(1-p)(v_{n-2}-w_{n-2})+w_{n-1}+(1-p)(w_{n-2}-w_{n-3})+(pq+r)u_{n-2}\notag\\
&\quad+(1-p)qu_{n-3}+pv_{n-2}+(1-p)v_{n-3}\notag\\
&=(pq+r)u_{n-2}+(1-p)qu_{n-3}+(1+p)v_{n-1}+v_{n-2}+(1-p)(v_{n-3}+w_{n-1}-w_{n-3}).\label{123l3e4}
\end{align}
Subtracting \eqref{123l3e3} from \eqref{123l3e4} leads to \eqref{123l3e2} and completes the proof.
\end{proof}

\begin{theorem}\label{Dxyth}
The generating function that enumerates the members of $\mathcal{F}$ jointly according to length, $\text{trun}-1$, $\#123$, $\#231$ and $\#221$  (marked by $x$, $y$, $p$, $q$ and $r$, respectively) is given by
\begin{align}
D(x,y)&=\frac{x}{1-x(1+py)+(p-1)x^2y}\notag\\
&\quad+\frac{x^2+(pq-p+r-1)x^3-(p-1)(q-1)x^4-xy(p-1)(1-(p+1)x+(p-1)x^2)^2}{(1-(p-1)x)(1-(p+1)x+(p-1)x^2)(1-x(1+py)+(p-1)x^2y)}V,\label{Dxythe1}
\end{align}
where $$V=D(x,1)=\frac{x(1-(p-1)x)(1-(p+1)x+(p-1)x^2)}{1-2(p+1)x+(p^2+4p-2)x^2-(2p^2+pq-p+r-3)x^3+(p-1)(p+q-2)x^4}.$$
\end{theorem}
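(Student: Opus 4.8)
The plan is to mimic exactly the technique used in the proofs of Theorems \ref{Axyth}, \ref{Bxyth} and \ref{Cxyth}, now applied to the system of recurrences for $u_n$, $v_n$, $w_n$ established in Lemma \ref{123l3}. The key difference from the earlier cases is that the functional equation \eqref{123l2e1} involves \emph{three} auxiliary generating functions, $U$, $V$ and $W$, rather than two, so the bookkeeping is heavier but the underlying strategy is identical: translate the coefficient recurrences into a linear system of equations among $U$, $V$, $W$, solve that system, and substitute back into the functional equation to isolate $D(x,y)$.

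First I would convert \eqref{123l3e1}--\eqref{123l3e3} into generating-function identities by multiplying each by $x^n$ and summing over $n \geq 3$, carefully incorporating the initial values $u_1=0$, $u_2=1$, $v_1=1$, $v_2=2$, $w_1=w_2=1$ (and the zero values at $n=0$) so that the low-order terms are accounted for. This yields three linear equations in $U$, $V$, $W$ with coefficients that are polynomials in $x$, $p$, $q$, $r$. I expect the shifts by one, two and three indices appearing in Lemma \ref{123l3} to produce factors of $x$, $x^2$, $x^3$ respectively, together with explicit constant and linear correction terms coming from the initial conditions. Solving this $3\times 3$ system for $V=D(x,1)$ should produce the stated closed form, and the denominator $1-2(p+1)x+(p^2+4p-2)x^2-(2p^2+pq-p+r-3)x^3+(p-1)(p+q-2)x^4$ will emerge as the determinant of the system (up to a factor), while the factors $(1-(p-1)x)$ and $(1-(p+1)x+(p-1)x^2)$ visible in the numerator of $V$ and throughout \eqref{Dxythe1} should appear from simplifying cofactors.

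Having obtained $V$, and along the way $U$ and $W$ as rational functions, I would substitute these into the functional equation \eqref{123l2e1} of Lemma \ref{123l2}. Dividing through by the factor $1-x(1+py)-(1-p)x^2y = 1-x(1+py)+(p-1)x^2y$ then gives $D(x,y)$ as an explicit rational function of $x$ and $y$. I expect that, after simplification, the $y$-dependence organizes so that $D(x,y)$ splits into the leading term $\tfrac{x}{1-x(1+py)+(p-1)x^2y}$ plus a term linear in $V$ whose numerator is $x^2+(pq-p+r-1)x^3-(p-1)(q-1)x^4-xy(p-1)(1-(p+1)x+(p-1)x^2)^2$, exactly as in \eqref{Dxythe1}; verifying this grouping is mostly a matter of collecting the contributions of $U$, $V$ and $W$ (recalling $W=D(x,0)$, which is consistent with setting $y=0$ in the final formula).

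The main obstacle will be the algebraic manipulation required to solve the $3\times 3$ linear system and then to recognize that the resulting expressions factor in the form displayed. Because $W=D(x,0)$ itself appears in \eqref{123l2e1}, one must be careful that the system for $U$, $V$, $W$ is genuinely determined by the recurrences of Lemma \ref{123l3} and is consistent with the $y=0$ specialization of the final answer; this self-referential feature (present already in Subsection \ref{ssec3.2}'s setup, where the $y=0$ case plays an instrumental role) is the subtle point. I would carry out the determinant computation and the cofactor simplifications, check that the denominator of $V$ factors as claimed, and then confirm by direct substitution that \eqref{Dxythe1} satisfies \eqref{123l2e1}; this last verification is routine but lengthy, and I would present the solved forms of $U$, $V$, $W$ explicitly so that the reader can reconstruct the simplification leading to \eqref{Dxythe1}.
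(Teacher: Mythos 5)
Your proposal is correct and follows essentially the same route as the paper: the paper likewise converts the recurrences of Lemma \ref{123l3} into a linear system for $U$, $V$, $W$, solves it (by successive elimination---first expressing $W$ and then $U$ in terms of $V$, rather than by an explicit $3\times3$ determinant), and substitutes the resulting rational expressions into \eqref{123l2e1} to isolate $D(x,y)$. Your concern about the self-referential role of $W=D(x,0)$ is resolved exactly as you anticipate, since the system yields $W$ directly as a rational multiple of $V$.
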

\begin{proof}
Rewriting \eqref{123l3e1}--\eqref{123l3e3} in terms of generating functions, we have
\begin{align}
(1-(p+1)x+(p-1)x^2)U&=x(p-(p-1)x)V-(p-1)x(1-x)W, \label{Dxythe2}\\
W&=\frac{1-(p+1)x+(p-1)x^2}{(1-x)(1-(p-1)x)}V,\label{Dxythe3}\\
(1-x)(1+(p-1)x^2)W&=x+x^2(pq+r-(p-1)qx)U+x^2(p-(p-1)x)V. \label{Dxythe4}
\end{align}
Substituting \eqref{Dxythe3} into \eqref{Dxythe2}, and solving for $U$ in terms of $V$, gives
\begin{equation}\label{Dxythe5}
U=\frac{x}{(1-(p-1)x)(1-(p+1)x+(p-1)x^2)}V.
\end{equation}
Substituting \eqref{Dxythe3} and \eqref{Dxythe5} into \eqref{Dxythe4} implies
\begin{align*}
\frac{(1+(p-1)x^2)(1-(p+1)x+(p-1)x^2)}{1-(p-1)x}V&=x+x^2(p-(p-1)x)V\\
&\quad+\frac{x^3(pq+r-(p-1)qx)}{(1-(p-1)x)(1-(p+1)x+(p-1)x^2)}V,
\end{align*}
and solving for $V$ yields its stated formula above.  By \eqref{123l2e1}, \eqref{Dxythe3} and \eqref{Dxythe5}, we have
\begin{align*}
&(1-x(1+py)+(p-1)x^2y)D(x,y)\\
&=x+x^2(p-(p-1)x)V-\frac{(p-1)x(x+y)(1-(p+1)x+(p-1)x^2)}{1-(p-1)x}V\\
&\quad+\frac{x^3(pq+r-(p-1)qx)}{(1-(p-1)x)(1-(p+1)x+(p-1)x^2)}V\\
&=x+\frac{x^2-xy(p-1)(1-(p+1)x+(p-1)x^2)}{1-(p-1)x}V+\frac{x^3(pq+r-(p-1)qx)}{(1-(p-1)x)(1-(p+1)x+(p-1)x^2)}V.
\end{align*}
Combining the last two terms containing $V$, and solving for $D(x,y)$, yields \eqref{Dxythe1}.
\end{proof}

Note that the $p=q=1$ cases of $C(x,y)$ and $D(x,y)$ are equal, as they should be, and give the generating function for the joint distribution of trun and $221$ on $\mathcal{F}_n$ for $n\geq 1$.  We have the following explicit formulas for $\text{tot}_n(123)$ and $f_n(123)$, which may be obtained from the $q=r=y=1$ case of Theorem \ref{Dxyth}.

\begin{corollary}\label{Dxyc1}
We have $\text{tot}_n(123)=(n-2)3^{n-3}$ for $n \geq 2$
and
$$f_n(123)=1+\sum_{j=2}^n\sum_{\ell=1}^{j-1}\binom{j-2}{\ell-1}\binom{n-j+\ell}{j-1}, \qquad n \geq 1.$$
\end{corollary}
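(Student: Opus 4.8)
<br>

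The plan is to extract both formulas from Theorem \ref{Dxyth} by specializing the three pattern-marking variables. For $\text{tot}_n(123)$, I would first set $q=r=y=1$ in the generating function $D(x,y;p,q,r)$ to obtain $F_{123}(x;p)=D(x,1;p,1,1)$, which marks only the $123$ subword by $p$. The total number of occurrences of $123$ across $\mathcal{F}_n$ is then the coefficient of $x^n$ in $\frac{\partial}{\partial p}F_{123}(x;p)\big|_{p=1}$. Since the paper has already recorded in Table \ref{tab1} that $F_{123}(x;q)$ has a specific rational form, I expect that setting $q=r=1$ in the formula for $V$ in Theorem \ref{Dxyth} recovers precisely that entry, giving a clean denominator. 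Differentiating with respect to $p$, setting $p=1$, and simplifying should produce a rational function whose partial-fraction decomposition has a pole structure at $x=1/3$ of order two (accounting for the $n3^{n-3}$ growth) together with lower-order terms; extracting $[x^n]$ then yields $(n-2)3^{n-3}$.

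For $f_n(123)$, the idea is to set $q=r=y=1$ as before but now put $p=0$, so that $F_{123}(x;0)$ enumerates the members of $\mathcal{F}_n$ that avoid $123$ entirely. I would evaluate $V=D(x,1;0,1,1)$ directly from its displayed formula, obtaining an explicit rational generating function for $f_n(123)$. The combinatorial-looking double sum $1+\sum_{j=2}^n\sum_{\ell=1}^{j-1}\binom{j-2}{\ell-1}\binom{n-j+\ell}{j-1}$ strongly suggests that rather than extract coefficients algebraically, the cleaner route is a direct counting argument paralleling the skeleton-augmentation method used in the proof of Corollary \ref{Bxyc2}. I would classify a $123$-avoiding member of $\mathcal{F}_n$ by the length $j$ of its skeleton and by a secondary parameter $\ell$ recording how many of its increasing runs have length at least two (equivalently, tracking the ascent structure), and then count the ways to augment run lengths without creating a $123$; the binomials $\binom{j-2}{\ell-1}$ and $\binom{n-j+\ell}{j-1}$ should arise respectively from choosing the ascent positions in the skeleton and from distributing the $n-j$ extra letters into admissible runs. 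The isolated ``$+1$'' accounts for the all-ones word $1^n$, exactly as in the $|\mathcal{F}_n|$ computation following Lemma \ref{11avoidlem}.

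The main obstacle I anticipate is verifying that the combinatorial double sum genuinely matches $[x^n]F_{123}(x;0)$, i.e. reconciling the bijective/skeletal count with the algebraic generating function coming out of Theorem \ref{Dxyth}. The constraint that augmentation must avoid $123$ is more delicate than the $112$ or $211$ constraints handled in Corollary \ref{Bxyc2}, because $123$ is a length-three pattern spanning two consecutive ascents rather than a single ascent or descent; one must carefully identify which runs can be lengthened and which ascent tops are ``frozen.'' I would resolve this by first confirming the generating-function identity via the Cauchy-product/Vandermonde-type summation (the double sum telescopes against the known rational form), and only then assert the combinatorial interpretation. For $\text{tot}_n(123)$ the risk is purely computational: the derivative $\frac{\partial}{\partial p}V\big|_{p=1}$ involves differentiating a quartic denominator, so I would organize the algebra by writing $V=N(x;p)/\Delta(x;p)$ and using $\frac{\partial}{\partial p}(N/\Delta)=(N_p\Delta-N\Delta_p)/\Delta^2$ evaluated at $p=1$, where $\Delta(x;1)=(1-x)(1-3x)^2$ up to the expected factorization, making the $3^{n-3}$ and linear-in-$n$ behavior transparent.
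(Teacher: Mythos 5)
Your proposal is correct and follows essentially the same route as the paper: the $\text{tot}_n(123)$ formula is obtained by specializing Theorem \ref{Dxyth} at $q=r=y=1$ and differentiating in $p$ at $p=1$, and the double sum for $f_n(123)$ is established by the skeleton-augmentation count (classifying $123$-avoiders by the length $j$ of the skeleton and the parameter $\ell$ with $\#123(\text{skel})=j-\ell-1$, forcing a duplicate after each letter playing the role of the ``2''), exactly as in the paper's Subsection on combinatorial proofs. One small correction to your anticipated algebra: at $p=q=r=1$ the denominator of $V$ factors as $(1-x)(1-3x)$, not $(1-x)(1-3x)^2$; the double pole at $x=1/3$ responsible for the $(n-2)3^{n-3}$ growth arises only from the $\Delta^2$ in the quotient-rule expression $\bigl(N_p\Delta-N\Delta_p\bigr)/\Delta^2$.
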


  From Theorem \ref{Dxyth}, we also have
$$D(x,1;1,p,q)=\frac{x(1-2x)}{1-4x+3x^2-(p+q-2)x^3}=D(x,1;1,q,p),$$
and hence the joint distribution
$$d_n(p,q):=\sum_{\pi \in \mathcal{F}_n}p^{\#231(\pi)}q^{\#221(\pi)}, \qquad n \geq 1,$$
is symmetric in $p$ and $q$.  In particular, the statistics $\#231$ and $\#221$ are identically distributed on $\mathcal{F}_n$, and hence $\text{tot}_n(231)=\text{tot}_n(221)$ and $f_n(231)=f_n(221)$ for all $n$.  To realize $d_n(p,q)=d_n(q,p)$ bijectively, consider the involution on $\mathcal{F}_n$ that replaces each string of the form $a(a+1)b$, where $a>b$, with $aab$, and vice versa.

\subsection{Distribution of $\#112$, $\#212$ and $\#312$}\label{ssec3.3}

Let $e_{n,m}=e_{n,m}(p,q,r)$ be given by
$$e_{n,m}=\sum_{\pi \in \mathcal{F}_{n,m}}p^{\#112(\pi)}q^{\#212(\pi)}r^{\#312(\pi)}, \qquad 1 \leq m \leq n.$$
The $e_{n,m}$ satisfy the following recursion.

\begin{lemma}\label{112al1}
If $n \geq 3$, then
\begin{align}
e_{n,m}&=e_{n-1,m}+e_{n-1,m-1}+(p-1)e_{n-2,m-1}, \qquad 3 \leq m \leq n, \label{112al1e1}\\
e_{n,2}&=e_{n-1,2}+pe_{n-2,1}+\sum_{j=2}^{n-2}(r(j-2)+q)e_{n-2,j},\label{112al1e2}\\
e_{n,1}&=e_{n-1,1}+\sum_{j=2}^{n-1}(j-1)e_{n-1,j},\label{112al1e3}
\end{align}
with $e_{1,1}=e_{2,1}=e_{2,2}=1$.
\end{lemma}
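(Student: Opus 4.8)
The plan is to prove all three recurrences by the symbolic/bijective case analysis on the final letters of $\pi\in\mathcal{F}_{n,m}$, exactly mirroring the arguments used for Lemmas~\ref{112l1} and~\ref{123l1}; the only genuinely new ingredient is the concrete description of the three patterns in the Catalan setting. The decisive observation I would record first is that, since $\pi$ is a Catalan word, every ascent has the form $b\,(b+1)$, and each tracked pattern is a specific string ending in such an ascent: $112 = a\,a\,(a+1)$, $\;212=(a+1)\,a\,(a+1)$ and $312 = c\,a\,(a+1)$ with $c\ge a+2$. Hence appending one letter to a word can create a new occurrence only at the very end, and only when the final step is an ascent $b\to b+1$; which pattern (if any) results is then dictated entirely by the antepenultimate letter $z$: one gets $112$ if $z=b$, a (untracked) $123$ if $z=b-1$, $212$ if $z=b+1$, and $312$ if $z\ge b+2$. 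I would also recall from the proof of Lemma~\ref{lemA(x,y)} the flattened constraint that if a member of $\mathcal{F}$ ends in $a$ with $\text{trun}=j$, then its admissible descent bottoms are exactly $b\in[a-j+1,a-1]$, i.e.\ $j-1$ choices. Two elementary bijections are used throughout: deleting one copy of a final level letter is both $\text{trun}$- and pattern-preserving (so the weight of words in $\mathcal{F}_{n-1,\mu}$ ending in a level equals $e_{n-2,\mu}$), and deleting a final descent bottom leaves a shorter flattened word of the same $\text{trun}$ without changing any pattern count.

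For \eqref{112al1e1} with $m\ge 3$: since $m\ge 2$, $\pi$ cannot end in a descent. If the last two letters form a level, removing one yields a word in $\mathcal{F}_{n-1,m}$, contributing $e_{n-1,m}$. If they form an ascent $b\,(b+1)$, then $\text{trun}$ drops by one, so $\pi=\rho\,(b+1)$ with $\rho\in\mathcal{F}_{n-1,m-1}$; as $m-1\ge 2$ forbids a descent in $\rho$, the antepenultimate letter is $z=b$ (a new $112$, factor $p$) or $z=b-1$ (a $123$, no factor). By the level bijection the weight of $\rho$ ending in a level is $e_{n-2,m-1}$ and the remainder is $e_{n-1,m-1}-e_{n-2,m-1}$, so the ascent case gives $p\,e_{n-2,m-1}+(e_{n-1,m-1}-e_{n-2,m-1})$; adding the level term produces \eqref{112al1e1}.

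For \eqref{112al1e2} ($m=2$) and \eqref{112al1e3} ($m=1$) I would run the same split. In the $m=2$ ascent case one has $\text{trun}(\rho)=1$: if $\rho$'s terminal run has length $\ge 2$ then $z=b$, giving $112$ with weight $p\,e_{n-2,1}$; if it has length $1$ then $\rho=\rho'a\,b$ ends in a descent, and writing $\text{trun}(\rho'a)=j$ the admissible $b\in[a-j+1,a-1]$ split into $b=a-1$ (so $z=b+1$, a $212$, factor $q$) and the $j-2$ values $b\le a-2$ (so $z\ge b+2$, a $312$, factor $r$), contributing $\sum_{j=2}^{n-2}(r(j-2)+q)e_{n-2,j}$; together with the level term $e_{n-1,2}$ this is exactly \eqref{112al1e2}. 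For $m=1$, $\pi$ ends in a constant run: length $\ge 2$ contributes $e_{n-1,1}$, whereas length $1$ means $\pi=\pi'b$ ends in a descent, which creates no tracked pattern, so each $\pi'$ of $\text{trun}$ $j$ supplies $j-1$ choices of $b$ and weight $e_{n-1,j}$, giving $\sum_{j=2}^{n-1}(j-1)e_{n-1,j}$ and hence \eqref{112al1e3}. The base values $e_{1,1}=e_{2,1}=e_{2,2}=1$ are read directly off $\mathcal{F}_1=\{1\}$ and $\mathcal{F}_2=\{11,12\}$.

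I expect the main obstacle to be the pattern-translation bookkeeping in the $m=2$ case: one must verify that the four-way classification of the antepenultimate letter $z\in\{b-1,b,b+1,\ \ge b+2\}$ is exhaustive and lines up precisely with the flattened descent-bottom range $[a-j+1,a-1]$, so that the coefficient $r(j-2)+q$ emerges (rather than, say, $r(j-1)$ or $rj+q$). The secondary point requiring care is confirming that the two deletion bijections genuinely preserve both $\text{trun}$ and all three pattern statistics, since the weight identities $p\,e_{n-2,1}$ and the level contributions $e_{n-1,m}$ depend on this.
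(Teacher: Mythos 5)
Your proposal is correct and follows essentially the same route as the paper's proof: a case analysis on the final letters of $\pi$, using the fact that each of $112$, $212$, $312$ ends in an ascent $a(a+1)$ and is distinguished by the antepenultimate letter, together with the $\text{trun}$-based count $[a-j+1,a-1]$ of admissible descent bottoms to produce the coefficient $r(j-2)+q$ in the $m=2$ case and $j-1$ in the $m=1$ case. The bookkeeping you flag as the main obstacle is handled exactly as you describe in the paper's argument, so there is nothing further to add.
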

\begin{proof}
First, note that the weight of all members of $\mathcal{F}_{n,m}$ whose last two letters are the same where $n \geq 3$ and $m \in [n]$ is given by $e_{n-1,m}$ and that the weight of those ending in $a,a,a+1$ for some $a \geq 1$ is $pe_{n-2,m-1}$ if $m\geq2$. For $m\geq3$, the only other possibility is ending in $a,a+1,a+2$, and for this, it is seen that such members of $\mathcal{F}_{n,m}$ have weight $e_{n-1,m-1}-e_{n-2,m-1}$, by subtraction.  Combining with the previous cases implies \eqref{112al1e1}.  Note that a member of $\mathcal{F}_{n,1}$ not ending in a level must end in a descent.  Suppose $\rho=\rho'a \in \mathcal{F}_{n,1}$, where the last letter of $\rho'$ exceeds $a$.  Considering all possible values of $\text{trun}(\rho')$ accounts for the summation on the right side of \eqref{112al1e3}. Combining with the prior case of ending in a level then yields \eqref{112al1e3}.

On the other hand, if $m=2$, then we must account for $\pi \in \mathcal{F}_{n,2}$ where $n\geq4$ that are expressible as $\pi=\pi'\ell a(a+1)$ such that $\ell \geq 2$ and $a \in [\ell-1]$.  Suppose $j=\text{trun}(\pi'\ell)$, where $j \in [2,n-2]$. Then $\ell \geq j$ and $a \in [\ell-j+1,\ell-1]$ since $a$ is a descent bottom.  If $a=\ell-1$, then appending $\ell-1,\ell$ to $\pi'\ell$ to form $\pi$ introduces an occurrences of $212$, whereas if $a \in [\ell-j+1,\ell-2]$, then appending $a,a+1$ gives rise to a $312$.  Thus, the final two letters of $\pi$ are accounted for by $r(j-2)+q$.  As $\pi'\ell$ is enumerated by $e_{n-2,j}$, we get $(r(j-2)+q)e_{n-2,j}$ possibilities for $\pi$ such that $\text{trun}(\pi'\ell)=j$.  Summing over all $j \in [2,n-2]$ then gives $\sum_{j=2}^{n-2}(r(j-2)+q)e_{n-2,j}$ possible $\pi \in \mathcal{F}_{n,2}$ ending in $\ell,a,a+1$ for some $\ell>a\geq 1$.  Combining with the prior cases yields \eqref{112al1e2} and completes the proof.
\end{proof}

Let $E(x,y)=\sum_{n\geq1}\sum_{m=1}^ne_{n,m}x^ny^{m-1}$. By \eqref{112al1e1}--\eqref{112al1e3}, we have
\begin{align*}
&E(x,y)-x-x^2(1+y)\\
&=x(E(x,y)-x)+\sum_{n\geq3}\sum_{m=3}^ne_{n-1,m-1}x^ny^{m-1}+p\sum_{n\geq3}\sum_{m=2}^{n-1}e_{n-2,m-1}x^ny^{m-1}-\sum_{n\geq4}\sum_{m=3}^{n-1}e_{n-2,m-1}x^ny^{m-1}\\
&\quad+ry\sum_{n\geq4}\sum_{j=2}^{n-2}(j-1)e_{n-2,j}x^n+(q-r)y\sum_{n\geq4}\sum_{j=2}^{n-2}e_{n-2,j}x^n+\sum_{n\geq3}\sum_{j=2}^{n-1}(j-1)e_{n-1,j}x^n\\
&=x(E(x,y)-x)+xy(E(x,y)-E(x,0))+px^2yE(x,y)-x^2y(E(x,y)-E(x,0))\\
&\quad+rx^2y\frac{\partial}{\partial y}E(x,y)\mid_{y=1}+(q-r)x^2y(E(x,1)-E(x,0))+x\frac{\partial}{\partial y}E(x,y)\mid_{y=1},
\end{align*}
which may be rewritten to give the following functional equation for $E(x,y)$.

\begin{lemma}\label{112al2}
We have
\begin{equation}\label{112al2e1}
(1-x-xy-(p-1)x^2y)E(x,y)=x+x^2y+x(1+rxy)U+(q-r)x^2yV-xy(1-(r-q+1)x)W,
\end{equation}
where $U=\frac{\partial}{\partial y}E(x,y)\mid_{y=1}$, $V=E(x,1)$ and $W=E(x,0)$.
\end{lemma}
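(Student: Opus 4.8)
The plan is to transform the coefficient recursions of Lemma \ref{112al1} into the single functional equation \eqref{112al2e1} by the standard device of multiplying each recurrence by the appropriate monomial and summing. Concretely, I would multiply the generic recurrence \eqref{112al1e1} by $x^ny^{m-1}$ and sum over $n\geq3$ and $3\leq m\leq n$, multiply the boundary recurrence \eqref{112al1e2} by $x^ny$ (its case is $m=2$, so $y^{m-1}=y$) and sum over $n\geq3$, and multiply \eqref{112al1e3} by $x^n$ (here $m=1$) and sum over $n\geq3$. Adding the three sums, the left-hand sides assemble into $\sum_{n\geq3}\sum_{m=1}^n e_{n,m}x^ny^{m-1}$, which is $E(x,y)$ with the contributions of $e_{1,1}$, $e_{2,1}$ and $e_{2,2}$ removed, i.e. $E(x,y)-x-x^2(1+y)$.

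Next I would identify each sum on the right as a shift of $E(x,y)$ or of one of its specializations, using the three bookkeeping identities $\sum_{m}e_{n,m}=[x^n]V$, $e_{n,1}=[x^n]W$ and $\sum_{m}(m-1)e_{n,m}=[x^n]U$, where $U$, $V$, $W$ are as in the statement. A downward shift $n\mapsto n-k$ introduces a factor $x^k$, while a shift $m\mapsto m-1$ carried by the weight $y^{m-1}$ introduces a factor $y$. Thus the common level term $e_{n-1,m}$ (present in all three recurrences) collapses to $x(E(x,y)-x)$; the diagonal ascent term $e_{n-1,m-1}$ of \eqref{112al1e1} gives $xy(E(x,y)-W)$; the $p$-weighted ascent terms give $px^2yE(x,y)$ and $-x^2y(E(x,y)-W)$; the run-length weighted sum in \eqref{112al1e3} becomes exactly $xU$; and, after writing $r(j-2)+q=r(j-1)+(q-r)$, the pattern-producing sums of \eqref{112al1e2} turn into $rx^2yU$ and $(q-r)x^2y(V-W)$.

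Finally I would collect like terms: moving all multiples of $E(x,y)$ to the left produces the coefficient $1-x-xy-(p-1)x^2y$, and the remaining contributions group so that the $U$-terms combine to $x(1+rxy)U$, the $V$-term is $(q-r)x^2yV$, and the several $W$-contributions $-xyW+x^2yW-(q-r)x^2yW$ combine to $-xy(1-(r-q+1)x)W$, while the surviving boundary polynomial reduces (after the $x^2$ coming from $x(E(x,y)-x)$ cancels the $x^2$ on the left) to $x+x^2y$. This is exactly the right-hand side of \eqref{112al2e1}.

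The step I expect to be the main obstacle is the bookkeeping in the middle paragraph. The recurrence \eqref{112al1e2} for $m=2$ is structurally different from \eqref{112al1e1} for $m\geq3$ (it carries an isolated $pe_{n-2,1}$ rather than the pair $e_{n-1,1}+(p-1)e_{n-2,1}$), so reconciling the two index ranges forces one to regroup the ascent contributions as a single $p$-weighted sum over $m\geq2$ together with a correction term restoring the coefficient $p-1$ for $m\geq3$. Tracking the resulting low-order corrections is delicate, and in particular the coefficient $-xy(1-(r-q+1)x)$ of $W$ arises only after three separate $W$-contributions — one from the diagonal ascent term, one from the $p$-weighted correction, and one from the $(q-r)$ pattern sum — are combined. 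Once these shifts and boundary terms are handled correctly, the passage to \eqref{112al2e1} is routine algebra.
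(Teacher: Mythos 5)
Your proposal is correct and follows essentially the same route as the paper: the paper derives \eqref{112al2e1} by exactly this process of multiplying \eqref{112al1e1} by $x^ny^{m-1}$, \eqref{112al1e2} by $x^ny$, and \eqref{112al1e3} by $x^n$, summing over $n\geq3$, recognizing the shifted sums as $x(E(x,y)-x)$, $xy(E(x,y)-W)$, $px^2yE(x,y)-x^2y(E(x,y)-W)$, $rx^2yU$, $(q-r)x^2y(V-W)$ and $xU$, and collecting terms. Your regrouping of the $m=2$ ascent contribution via the split $p\sum_{m\geq2}-\sum_{m\geq3}$ and the decomposition $r(j-2)+q=r(j-1)+(q-r)$ are precisely the manipulations the paper uses, and your final bookkeeping of the $W$-terms and boundary polynomial agrees with \eqref{112al2e1}.
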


\begin{lemma}\label{112al3}
If $n \geq3$, then
\begin{align}
u_n&=u_{n-1}+(p-1)u_{n-2}+v_n-v_{n-1},\label{112al3e1}\\
v_n&=u_{n-1}+ru_{n-2}+2v_{n-1}+(p+q-r-1)v_{n-2}-w_{n-1}+(r-q+1)w_{n-2},\label{112al3e2}\\
w_n&=u_{n-1}+w_{n-1},\label{112al3e3}
\end{align}
with initial values $u_0=v_0=w_0=u_1=0$, $v_1=w_1=u_2=w_2=1$ and $v_2=2$.
\end{lemma}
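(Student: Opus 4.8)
The plan is to follow the strategy already used to pass from the array recursions to the aggregate systems (as in Lemma \ref{123l3} and in the proof of Theorem \ref{Cxyth}): convert the three-part recursion of Lemma \ref{112al1} for $e_{n,m}$ into recurrences for $u_n=\sum_{m=1}^n(m-1)e_{n,m}$, $v_n=\sum_{m=1}^ne_{n,m}$ and $w_n=e_{n,1}$. The workhorse throughout will be the two aggregation identities $\sum_{m\geq 2}e_{k,m}=v_k-w_k$ and $\sum_{m\geq 2}m\,e_{k,m}=u_k+v_k-w_k$ (the latter following from $\sum_{m\geq 2}m\,e_{k,m}=\sum_{m\geq 2}(m-1)e_{k,m}+\sum_{m\geq 2}e_{k,m}$), together with the facts that the $m=1$ term contributes $0$ to $u_k$ and that $e_{k,m}=0$ whenever $m>k$, so upper summation limits may be relaxed without cost.

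The easiest relation is \eqref{112al3e3}: in \eqref{112al1e3} the inner sum is $\sum_{j=2}^{n-1}(j-1)e_{n-1,j}=u_{n-1}$, whence $w_n=e_{n,1}=e_{n-1,1}+u_{n-1}=w_{n-1}+u_{n-1}$. For \eqref{112al3e2} I would write $v_n=e_{n,1}+e_{n,2}+\sum_{m=3}^n e_{n,m}$, sum \eqref{112al1e1} over $3\le m\le n$, and re-index its three constituent sums so that each is expressed through $u,v,w$ at levels $n-1$ and $n-2$; the value of $e_{n,2}$ is supplied by \eqref{112al1e2}, where $\sum_{j=2}^{n-2}(r(j-2)+q)e_{n-2,j}$ is rewritten as $r(u_{n-2}-v_{n-2}+w_{n-2})+q(v_{n-2}-w_{n-2})$ via the identities above. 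A helpful feature is that the two stray occurrences of $e_{n-1,2}$---one from isolating the $m=2$ term of $\sum_{m\geq 2}e_{n-1,m}$ and one from $e_{n,2}$ itself---cancel, so nothing outside $\{u,v,w\}$ remains, and collecting coefficients yields \eqref{112al3e2}.

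The relation \eqref{112al3e1} I would obtain by the subtraction device used for \eqref{123l3e2}. First I would compute a raw expression for $u_n=e_{n,2}+\sum_{m=3}^n(m-1)e_{n,m}$, applying $\sum_{m\geq 2}m\,e_{k,m}=u_k+v_k-w_k$ to the shifted sums arising from \eqref{112al1e1} and substituting $e_{n,2}-e_{n-1,2}$ read off from \eqref{112al1e2}. Subtracting the already-established \eqref{112al3e2} from this raw expression cancels every $v_{n-2}$, $w_{n-1}$ and $w_{n-2}$ term and reduces the coefficient of $u_{n-2}$ from $p-1+r$ to $p-1$, leaving precisely $u_n-v_n=u_{n-1}+(p-1)u_{n-2}-v_{n-1}$, which is \eqref{112al3e1}. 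The stated initial values for $0\le n\le 2$ then follow directly from $e_{1,1}=e_{2,1}=e_{2,2}=1$.

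The main obstacle is entirely the boundary bookkeeping: because the $m=2$ row is governed by the separate rule \eqref{112al1e2} rather than \eqref{112al1e1}, one must carefully isolate the $m=2$ terms when splitting the sums over $m$, track the extra $e_{n-1,2}$ and $e_{n-1,1}=w_{n-1}$ contributions that this produces, and align the weighted sum $\sum_j(r(j-2)+q)e_{n-2,j}$ with the correctly shifted index before applying the aggregation identities. Once these low-order terms are matched, the remainder is routine coefficient collection, and the clean cancellation observed in the subtraction step serves as a built-in check that the bookkeeping has been carried out correctly.
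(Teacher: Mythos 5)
Your proposal is correct and follows essentially the same route as the paper: \eqref{112al3e3} read off directly from \eqref{112al1e3}, \eqref{112al3e2} obtained by summing the three row recurrences and converting via the aggregation identities $\sum_{m\geq2}e_{k,m}=v_k-w_k$ and $\sum_{m\geq2}m\,e_{k,m}=u_k+v_k-w_k$, and \eqref{112al3e1} obtained by subtracting \eqref{112al3e2} from a raw expression for $u_n$ (the paper's equation \eqref{112al3e4}, whose coefficients match the ones you describe). The cancellations you flag, including the two stray $e_{n-1,2}$ terms and the reduction of the $u_{n-2}$ coefficient from $p+r-1$ to $p-1$, are exactly what occurs in the paper's computation.
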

\begin{proof}
Formula \eqref{112al3e3} follows immediately from \eqref{112al1e3} and the definitions of the various sequences.  By \eqref{112al1e1}--\eqref{112al1e3}, we have
\begin{align*}
v_n&=\sum_{m=1}^ne_{n,m}\\
&=v_{n-1}+\sum_{m=3}^ne_{n-1,m-1}+(p-1)\sum_{m=2}^{n-1}e_{n-2,m-1}+e_{n-2,1}+r\sum_{j=2}^{n-2}(j-1)e_{n-2,j}+(q-r)\sum_{j=2}^{n-2}e_{n-2,j}+u_{n-1}\\
&=2v_{n-1}-w_{n-1}+(p-1)v_{n-2}+w_{n-2}+ru_{n-2}+(q-r)(v_{n-2}-w_{n-2})+u_{n-1},
\end{align*}
which implies \eqref{112al3e2}.  Using \eqref{112al1e1} and \eqref{112al1e2}, one can show
\begin{align}
u_n&=\sum_{m=2}^n(m-1)e_{n,m}\notag\\
&=u_{n-1}+\sum_{m=3}^n(m-1)e_{n-1,m-1}+(p-1)\sum_{m=2}^{n-1}(m-1)e_{n-2,m-1}+e_{n-2,1}+r\sum_{j=2}^{n-2}(j-1)e_{n-2,j}\notag\\
&\quad+(q-r)\sum_{j=2}^{n-2}e_{n-2,j}\notag\\
&=2u_{n-1}+v_{n-1}-w_{n-1}+(p-1)(u_{n-2}+v_{n-2})+w_{n-2}+ru_{n-2}+(q-r)(v_{n-2}-w_{n-2})\notag\\
&=2u_{n-1}+(p+r-1)u_{n-2}+v_{n-1}+(p+q-r-1)v_{n-2}-w_{n-1}+(r-q+1)w_{n-2}.\label{112al3e4}
\end{align}
Subtracting \eqref{112al3e2} from \eqref{112al3e4}  yields \eqref{112al3e1} and completes the proof.
\end{proof}

\begin{theorem}\label{Exyth}
The generating function that enumerates the members of $\mathcal{F}$ jointly according to length, $\text{trun}-1$, $\#112$, $\#212$ and $\#312$  (marked by $x$, $y$, $p$, $q$ and $r$, respectively) is given by
\begin{align}
E(x,y)&=\frac{x-3x^2-(p-3)x^3+(p-1)x^4-x^3y(q-1+(r-2q+1)x+(p-1)(r-q)x^2)}{(1-x)(1-x-(p-1)x^2)(1-x-xy-(p-1)x^2y)}\notag\\
&\quad+\frac{x-x^2+x^2y(q-1+(r-2q+1)x+(p-1)(r-q)x^2)}{(1-x-(p-1)x^2)(1-x-xy-(p-1)x^2y)}V,\label{Exythe1}
\end{align}
where $V=E(x,1)=\frac{x\alpha}{(1-x)\beta}$ with
\begin{align*}
\alpha&=1-3x-(p+q-4)x^2+(p+2q-r-2)x^3+(p-1)(q-r)x^4,\\
\beta&=1-4x-(2p+q-6)x^2+(3p+2q-r-4)x^3+(p-1)(p+q-r-1)x^4.
\end{align*}
\end{theorem}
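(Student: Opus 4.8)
The proof will follow the same template as Theorems \ref{Axyth}, \ref{Bxyth} and \ref{Dxyth}: I would convert the recurrences of Lemma \ref{112al3} for the auxiliary sequences $u_n$, $v_n$, $w_n$ into a linear system of generating-function equations in $U$, $V$ and $W$, solve that system explicitly for $V=E(x,1)$ (and hence for $U$ and $W$), and then substitute the resulting expressions into the functional equation \eqref{112al2e1} to recover $E(x,y)$. Concretely, I would multiply each of \eqref{112al3e1}, \eqref{112al3e2} and \eqref{112al3e3} by $x^n$ and sum over $n\geq3$, adding back the low-order boundary contributions dictated by the stated initial values $u_0=v_0=w_0=u_1=0$, $v_1=w_1=u_2=w_2=1$ and $v_2=2$. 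One checks that \eqref{112al3e1} and \eqref{112al3e3} in fact remain valid at $n=2$, so their generating-function forms carry no extra correction, whereas \eqref{112al3e2} fails at $n=2$ (it would give $1$ rather than $v_2=2$) and so contributes an explicit boundary term. This produces three equations linear in $U$, $V$, $W$ whose coefficients are rational in $x$ and polynomial in $p,q,r$.

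The second step is to solve the $3\times 3$ system, and the cleanest route exploits the two simplest relations. From \eqref{112al3e3} one gets $(1-x)W=x+xU$, expressing $W$ through $U$; from \eqref{112al3e1} one gets $(1-x-(p-1)x^2)U=(1-x)V-x$, expressing $U$ through $V$. Substituting both into the generating-function form of \eqref{112al3e2} leaves a single linear equation in $V$, which I would solve to obtain $V=E(x,1)=\frac{x\alpha}{(1-x)\beta}$ with $\alpha$ and $\beta$ as stated, and then back-substitute to get closed forms for $U$ and $W$. Finally, inserting these three expressions into \eqref{112al2e1}, dividing through by $1-x-xy-(p-1)x^2y$, and combining over the common denominator $(1-x)(1-x-(p-1)x^2)(1-x-xy-(p-1)x^2y)$ should collapse to the two-term form \eqref{Exythe1}, with the $y$-dependence entering only through the last factor of the denominator exactly as in Theorem \ref{Dxyth}.

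The main obstacle is computational rather than conceptual: the denominator $\beta$ is a quartic in $x$ with coefficients linear in $p,q,r$, so verifying that the single equation for $V$ simplifies to precisely $\frac{x\alpha}{(1-x)\beta}$, and that the subsequent substitution telescopes to the displayed numerators of \eqref{Exythe1}, demands careful bookkeeping of the parameter-dependent coefficients. Two consistency checks would guide the computation: the factor $1-x-(p-1)x^2$ appearing in the denominator of $E(x,y)$ must match the coefficient of $U$ arising in the reduction of \eqref{112al3e1}, and setting $p=q=r=1$ must recover the counting generating function $\frac{x(1-2x)}{(1-x)(1-3x)}$ for $|\mathcal{F}_n|$, while $y=1$ must reproduce the stated formula for $V$.
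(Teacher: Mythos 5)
Your proposal is correct and follows essentially the same route as the paper: the paper likewise converts Lemma \ref{112al3} into the linear system \eqref{Exythe2}--\eqref{Exythe4} (including the extra $x^2$ boundary term from the failure of \eqref{112al3e2} at $n=2$), eliminates $W$ and $U$ in favor of $V$ to obtain $V=\frac{x\alpha}{(1-x)\beta}$, and then substitutes back into \eqref{112al2e1} to reach \eqref{Exythe1}. The only difference is cosmetic ordering of the substitutions.
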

\begin{proof}
Rewriting \eqref{112al3e1}--\eqref{112al3e3} in terms of generating functions yields the system
\begin{align}
(1-x-(p-1)x^2)U&=(1-x)V-x,\label{Exythe2}\\
(1-2x-(p+q-r-1)x^2)V&=x+x^2+x(1+rx)U-x(1-(r-q+1)x)W,\label{Exythe3}\\
W&=\frac{x(1+U)}{1-x}. \label{Exythe4}
\end{align}
Substituting \eqref{Exythe4} into \eqref{Exythe3}, and simplifying, gives
$$(1-2x-(p+q-r-1)x^2)V=\frac{x(1-x+(r-q)x^2)}{1-x}+\frac{x(1+(r-2)x-(q-1)x^2)}{1-x}U.$$
By \eqref{Exythe2}, we then have
\begin{align*}
(1-2x-(p+q-r-1)x^2)V&=\frac{x(1-x+(r-q)x^2)}{1-x}+\frac{x(1+(r-2)x-(q-1)x^2)}{1-x-(p-1)x^2}V\\
&\quad-\frac{x^2(1+(r-2)x-(q-1)x^2)}{(1-x)(1-x-(p-1)x^2)},
\end{align*}
and solving for $V$ gives its stated formula above.

Rewriting \eqref{112al2e1} in terms of $U$ using \eqref{Exythe2} and \eqref{Exythe4} implies
\begin{align*}
&(1-x-xy-(p-1)x^2y)E(x,y)\\
&=x+x^2y+x(1+rxy)U+(q-r)x^2y\left(\frac{x}{1-x}+\frac{1-x-(p-1)x^2}{1-x}U\right)\\
&\quad-xy(1-(r-q+1)x)\left(\frac{x}{1-x}+\frac{x}{1-x}U\right)\\
&=x+\left(x+(r-1)x^2y+\frac{(q-r)x^2y(1-2x-(p-1)x^2)}{1-x}\right)U\\
&=x+\frac{x-x^2+x^2y(q-1+(r-2q+1)x+(p-1)(r-q)x^2)}{1-x}U.
\end{align*}
Thus, by \eqref{Exythe2}, we have
\begin{align*}
&(1-x-xy-(p-1)x^2y)E(x,y)\\
&=x+\frac{x-x^2+x^2y(q-1+(r-2q+1)x+(p-1)(r-q)x^2)}{1-x-(p-1)x^2}V\\
&\quad-\frac{x^2-x^3+x^3y(q-1+(r-2q+1)x+(p-1)(r-q)x^2)}{(1-x)(1-x-(p-1)x^2)},
\end{align*}
which leads to \eqref{Exythe1} after some algebra and completes the proof.
\end{proof}

Note that the $q=r=1$ case of $E(x,y)$ is seen to equal the corresponding case of $C(x,y)$, with both enumerating members of $\mathcal{F}_n$ jointly according to trun and $\#112$. From Theorem \ref{Exyth}, we obtain the following subword totals and avoidance formulas on $\mathcal{F}_n$ for the patterns 212 and 312.

\begin{corollary}\label{Exyc1}
If $n \geq 2$, then $$\text{tot}_n(212)=\frac{(n-2)(3^{n-3}-1)}{4}, \qquad \text{tot}_n(312)=\frac{(n-5)3^{n-3}+n-1}{4}.$$
If $n \geq 1$, then
$$f_n(212)=1+\sum_{j=2}^n\sum_{p=1}^{\lfloor j/2\rfloor}\sum_{r=0}^{p-1}\binom{p-1}{r}\binom{j-p}{p+r}\binom{n-p+r}{j-1}$$
and
$$f_n(312)=1+\sum_{j=2}^n\sum_{p=1}^{\lfloor j/2\rfloor}\sum_{r=0}^{p-1}\binom{p-1}{r}\binom{j-p}{p+r}\binom{n-r-1}{j-1}.$$
\end{corollary}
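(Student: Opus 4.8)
The plan is to obtain all four formulas as specializations of the generating function $E(x,y)$ furnished by Theorem~\ref{Exyth}. Since $p$, $q$, $r$ mark $\#112$, $\#212$, $\#312$ respectively, the convention relating $F_\tau(x;z)$ to $E$ gives $F_{212}(x;q)=E(x,1;1,q,1)$ and $F_{312}(x;r)=E(x,1;1,1,r)$. Setting $y=1$ in the theorem gives $E(x,1)=\frac{x\alpha}{(1-x)\beta}$, and substituting $p=r=1$ (resp. $p=q=1$) reproduces the two entries of Table~\ref{tab1}. The two totals then come from differentiating at the marker value $1$ (as in Corollary~\ref{Axyc1}), while the two avoidance counts come from setting the marker to $0$.

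For the totals I would compute $\sum_{n}\text{tot}_n(\tau)x^n=\frac{\partial}{\partial z}F_\tau(x;z)\mid_{z=1}$. The decisive simplification is that at the relevant specialization both polynomials factor, $\alpha\mid_{z=1}=(1-x)(1-2x)$ and $\beta\mid_{z=1}=(1-x)(1-3x)$, while the derivatives are proportional to $(1-x)$: for $212$ one has $\alpha_q=\beta_q=-x^2(1-2x)$, and for $312$ one has $\alpha_r=\beta_r=-x^3$. Writing $\frac{\partial}{\partial z}\frac{x\alpha}{(1-x)\beta}=\frac{x}{1-x}\cdot\frac{\alpha_z\beta-\alpha\beta_z}{\beta^2}$ and using $\beta\mid_{z=1}-\alpha\mid_{z=1}=-x(1-x)$ in both cases, everything cancels to leave $\frac{x^4(1-2x)}{(1-x)^2(1-3x)^2}$ for $212$ and $\frac{x^5}{(1-x)^2(1-3x)^2}$ for $312$. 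A routine partial-fraction decomposition over $(1-x)^2(1-3x)^2$, using $[x^m]\frac{1}{(1-x)^2(1-3x)^2}=\frac{m+4+m\,3^{m+2}}{4}$, then delivers $\text{tot}_n(212)=\frac{(n-2)(3^{n-3}-1)}{4}$ and $\text{tot}_n(312)=\frac{(n-5)3^{n-3}+n-1}{4}$, finishing these parts.

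For the avoidance counts I would set the relevant marker to zero, obtaining $F_{212}(x;0)=\frac{x(1-3x+3x^2-2x^3)}{(1-x)(1-4x+4x^2-2x^3)}$ and $F_{312}(x;0)=\frac{x(1-3x+2x^2+x^3)}{(1-x)(1-4x+3x^2+x^3)}$, and extract $[x^n]$. Because the cubic factors $1-4x+4x^2-2x^3$ and $1-4x+3x^2+x^3$ are irreducible over $\mathbb{Q}$, no finite closed form results from partial fractions; instead the triple sums are most transparently produced by the skeleton/insertion method of Subsection~\ref{combproofs} used for Corollary~\ref{Bxyc2}. Concretely, I would build an avoider of length $n$ by first choosing a level-free avoider (its skeleton) of some length $j$, classified by two further structural parameters $p$ and $r$ that yield the factor $\binom{p-1}{r}\binom{j-p}{p+r}$ common to both formulas, and then inserting the remaining $n-j$ duplicated letters in the positions permitted by the pattern being forbidden; this last step supplies the differing factors $\binom{n-p+r}{j-1}$ for $212$ and $\binom{n-r-1}{j-1}$ for $312$, with the isolated $+1$ accounting for $1^n$.

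I expect the avoidance formulas to be the main obstacle. The totals drop out of the generating function almost immediately, but matching the rational functions $F_{212}(x;0)$ and $F_{312}(x;0)$ to the stated three-index sums requires identifying precisely which letters of a skeleton may be duplicated without creating a $212$ (respectively a $312$), and this is exactly what distinguishes the two insertion binomials. I would either carry out this combinatorial model directly, or, as a check, verify that the proposed triple sums satisfy the linear recurrence encoded by the denominators $(1-x)(1-4x+4x^2-2x^3)$ and $(1-x)(1-4x+3x^2+x^3)$, which confirms agreement with the generating functions and hence completes the proof.
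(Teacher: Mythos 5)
Your proposal is correct and follows essentially the same route as the paper: the totals are obtained exactly as you describe by differentiating the specializations of $E(x,1)$ from Theorem \ref{Exyth} (your intermediate generating functions $\frac{x^4(1-2x)}{(1-x)^2(1-3x)^2}$ and $\frac{x^5}{(1-x)^2(1-3x)^2}$ and the coefficient extraction all check out), and the avoidance formulas are established by the same skeleton/insertion argument the paper carries out in its combinatorial-proofs subsection, where the parameters $p$ and $r$ count the runs of $0$ and a statistic equivalent to the number of $312$'s in the level-free skeleton (which then contains $p-r-1$ occurrences of $212$), forcing mandatory duplications that yield the two differing insertion binomials. The only detail left implicit in your sketch is this precise identification of $p$ and $r$, which is exactly what the paper supplies.
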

\medskip
\noindent{\bf Remarks:}  We have $\text{tot}_n(123)=A027471[n-1]$ for $n\geq 2$ and $\text{tot}_n(312)=A212337[n-5]$ for $n \geq 5$.  In \cite{BHR}, a \emph{valley} within a Catalan word is defined as a string of the form $ab^\ell(b+1)$ for some $\ell \geq 1$, where $a>b$.  A comparison with \cite[Corollary~4.7]{BHR} shows that $\text{tot}_n(312)$ equals the total number of valleys in all the members of $\mathcal{F}_{n-1}$.  This fact may be realized bijectively as follows.  Suppose $ab(b+1)$ is a marked occurrence of 312 in some member of $\mathcal{F}_n$, where $a>b+1$.  Let $\ell$ denote the length of the run of the letter $a$. That is, we have the string $(a-1)a^\ell b(b+1)$, with the final three letters comprising the marked occurrence of 312.  We then replace this string with $(a-1)b^{\ell}(b+1)$, which is then marked.  This then yields all marked members of $\mathcal{F}_{n-1}$ wherein a valley per the definition above is marked.

Furthermore, comparing Corollaries \ref{Bxyc1} and \ref{Exyc1}, we have that $\text{tot}_n(211)=\text{tot}_n(212)$ for all $n$ despite 211 and 212 having different distributions on $\mathcal{F}_n$, and it is possible to provide a bijective proof of this fact which leverages the bijection used to establish Lemma \ref{11avoidlem} whose details we omit. Finally, a \emph{short} valley refers to one in which $\ell=1$ in the definition above.  Note that the number of short valleys within a Catalan word equals the sum of the number of occurrences of 212 and 312.  Taking $p=y=1$ and $r=q$ in Theorem \ref{Exyth} implies that the generating function enumerating the members of $\mathcal{F}_n$ for $n \geq 1$ according to the number of short valleys is given by
$$E(x,1;1,q,q)=\frac{x(1-2x-(q-1)x^2)}{(1-x)(1-3x-(q-1)x^2)}.$$
This is in accordance with the formula found in \cite[Theorem~4.1]{BHR}.

\subsection{Combinatorial proofs}

Here, we provide direct counting arguments of the formulas for $\text{tot}_n(\tau)$ for the three-letter subword patterns $\tau$ which do not follow from Corollary \ref{Axyc1}.  We also prove the avoidance results from this section as well as the  equivalences in Corollary \ref{Cxythc1}. \medskip

\noindent\emph{Proofs of formulas for $\text{tot}_n(\tau)$ for $\tau\in\{212,121,312,123\}$:} \medskip

We first make some preliminary observations that apply to all four of the patterns in question. Given a subword $\tau$, let $\mathcal{G}_n(\tau)$ denote the set obtained from members of $\mathcal{F}_n$ by marking a single occurrence of $\tau$, if it exists.  Note that members of $\mathcal{F}_n$ that avoid $\tau$ do not give rise to any members of $\mathcal{G}_n(\tau)$.  Then $\text{tot}_n(\tau)=|\mathcal{G}_n(\tau)|$ for all $n \geq 1$, and we seek to enumerate members of $\mathcal{G}_n(\tau)$.  To do so, let $\mathcal{G}_n^*(\tau)$ denote the subset of $\mathcal{G}_n(\tau)$ whose members contain no levels.

Consider forming members of $\mathcal{G}_n(\tau)$ from skeletons in $\mathcal{G}_j^*(\tau)$ for some $j \in [n]$ in a manner analogous to the one described above wherein a total of $n-j$ extra copies of letters are inserted directly following the corresponding entries in a skeleton. Suppose $\tau$ itself does not contain any levels and consider a skeleton $\rho\in \mathcal{G}_j^*(\tau)$, where $j\geq |\tau|\geq 3$. Let $s$ denote the string of letters in $\rho$ corresponding to the marked occurrence of $\tau$. Then letters may be inserted into $\rho$ that directly follow any of the entries in $\rho$ except for those occupying a \emph{medial} position of $s$, i.e, one not corresponding to the first or last letter of $s$, when forming $\pi\in \mathcal{G}_n(\tau)$ from $\rho$.  The string in $\pi$ isomorphic to $\tau$ and containing the medial entries of $s$ then becomes the marked subword of $\pi$.

Thus, if $|\tau|=3$, with $\tau$ not containing a level, which is true of the four patterns under consideration here, there are $\binom{(n-j)+j-2}{j-2}=\binom{n-2}{j-2}$ possible $\pi$ that arise from each $\rho$.
Allowing $j$  to vary gives
\begin{equation}\label{Gntau}
|\mathcal{G}_n(\tau)|=\sum_{j=3}^n\binom{n-2}{j-2}|\mathcal{G}_j^*(\tau)|, \qquad n \geq 3,
\end{equation}
and thus we seek to enumerate the members of $\mathcal{G}_j^*(\tau)$ for $j \geq 3$ and the various $\tau$.

To aid in doing so, it is useful to differentiate between certain kinds of occurrences of a consecutive pattern $\tau$.  By a \emph{terminal} occurrence of $\tau$ within a word $w$, we mean one in which the final letter in the occurrence coincides with the final letter of $w$, with all other occurrences of $\tau$ being \emph{non-terminal}. Similarly, we will use the terms \emph{initial} and \emph{non-initial} when discerning occurrences of $\tau$ that start off the word $w$ from those that do not. \medskip

\noindent {\bf $\text{tot}_n(212)$:}  Let $J_n=\mathcal{G}_n^*(212)$ for $n\geq 3$.  Then $J_n$ is empty if $n=3$, so we seek to determine $|J_n|$ for $n\geq 4$.  To do so, first note that by the bijection used in the proof of  Lemma \ref{11avoidlem}, we have that members of $J_n$ are in one-to-one correspondence with ``marked'' sequences in $A_n$ wherein an occurrence of $010$ is marked. There are $(n-4)2^{n-5}$ such sequences $\pi=\pi_1\cdots\pi_{n-1}$ in $A_n$ for which the marked 010 is non-initial, as there are $n-4$ choices for the index $i\in [2,n-3]$ corresponding to the marked string $\pi_i\pi_{i+1}\pi_{i+2}$ of $\pi$ and $2^{n-5}$ choices concerning the remaining entries $\pi_j$, where $j \in [n-1]-\{1,i,i+1,i+2\}$. If the marked 010 is initial, then there are $2^{n-4}$ possibilities for $\pi$, as the entries in $\pi_4\cdots\pi_{n-1}$ may be chosen arbitrarily. Combining the two cases gives $(n-4)2^{n-5}+2^{n-4}=(n-2)2^{n-5}$ marked members of $A_n$, and hence the cardinality of $J_n$.  Thus, by
\eqref{Gntau}, we have
$$\text{tot}_n(212)=|\mathcal{G}_n(212)|=\sum_{j=4}^n(j-2)\binom{n-2}{j-2}2^{j-5}=\frac{n-2}{4}\sum_{j=4}^n\binom{n-3}{j-3}2^{j-3}=\frac{(n-2)(3^{n-3}-1)}{4},$$
as desired. \medskip

\noindent {\bf $\text{tot}_n(121)$:} First note that each $\alpha\in\mathcal{G}_n^*(212)$ gives rise to a $\beta \in \mathcal{G}_n^*(121)$ in which a non-terminal occurrence of $121$ is marked simply by shifting the marked string in $\alpha$ to the left by one position so as to obtain $\beta$.  This follows from the fact that the predecessor of the first $a$ in a string of the form $a,a-1,a$ within a member of $\mathcal{F}_n(11)$ must be $a-1$ since levels are not allowed and neither are consecutive descents.  Further, there are $2^{n-4}$ members of $\mathcal{G}_n^*(121)$ in which the marked 121 is terminal where $n \geq 4$, by Lemma \ref{11avoidlem}, upon taking an arbitrary $\lambda \in \mathcal{F}_{n-2}(11)$ and appending $b+1,b$ to $\lambda$, where $b$ denotes the final letter of $\lambda$. If $n=3$, then $\mathcal{G}_3^*(121)$ is the singleton consisting of 121 whose letters are marked.  Hence, we have
$$|\mathcal{G}_n^*(121)|-|\mathcal{G}_n^*(212)|=
    \begin{cases}
        2^{n-4}, & \text{if } n\geq 4;\\
        1, & \text{if } n=3.
    \end{cases}$$
By \eqref{Gntau}, this implies
\begin{align*}
\text{tot}_n(121)-\text{tot}_n(212)&=|\mathcal{G}_n(121)|-|\mathcal{G}_n(212)|=n-2+\sum_{j=4}^n\binom{n-2}{j-2}2^{j-4}\\
&=n-2+\frac{3^{n-2}-2n+3}{4}=\frac{3^{n-2}+2n-5}{4}.
\end{align*}
Adding this difference to the expression found above for $\text{tot}_n(212)$ yields the desired formula for $\text{tot}_n(121)$. \medskip

\noindent {\bf $\text{tot}_n(312)$:} Note first that members of $\mathcal{G}_n^*(312)$ are synonymous with ``marked'' sequences in $A_n$ wherein a 110 is marked.  Further, the cardinality of the latter set is seen to be $(n-4)2^{n-5}$ for $n\geq 5$, as four of the entries in $\pi=\pi_1\cdots \pi_{n-1} \in A_n$ are specified (namely, $\pi_1$, $\pi_i$, $\pi_{i+1}$, $\pi_{i+2}$ for some $i\in[2,n-3]$, where $\pi_i\pi_{i+1}\pi_{i+2}$ is the marked 110), with the others being chosen arbitrarily.  By \eqref{Gntau}, we then get
\begin{align*}
\text{tot}_n(312)&=\sum_{j=5}^n(j-4)\binom{n-2}{j-2}2^{j-5}=\text{tot}_n(212)-\sum_{j=4}^n\binom{n-2}{j-2}2^{j-4}\\
&=\frac{(n-2)(3^{n-3}-1)}{4}-\frac{3^{n-2}-2n+3}{4}=\frac{(n-5)3^{n-3}+n-1}{4},
\end{align*}
as desired. \medskip

\noindent{\bf $\text{tot}_n(123)$:} We first note that each maximal increasing run $0^a1^b$ in $\pi \in A_n$ where $a,b\geq 1$ is seen to give rise to $a+b-2$ occurrences of 123 in $\pi'\in\mathcal{F}_n(11)$, where $\pi \mapsto \pi'$ is the bijection from the proof of Lemma \ref{11avoidlem}.  Further, a terminal sequence $0^c$ yields $c-1$ additional occurrences of 123 in $\pi'$.  Thus, if $\pi$ is of the form
$$\pi=0^{a_1}1^{b_1}\cdots0^{a_r}1^{b_r}, \quad \text{or} \quad \pi=0^{a_1}1^{b_1}\cdots0^{a_r}1^{b_r}0^{a_{r+1}},$$
for some $r \geq 0$, where all exponents are positive, then $\sum_{i=1}^r(a_i+b_i)=n-1$ or $a_{r+1}+\sum_{i=1}^{r}(a_i+b_i)=n-1$ implies there are
$$\sum_{i=1}^r(a_i+b_i-2)=n-1-2r, \quad \text{or} \quad a_{r+1}-1+\sum_{i=1}^r(a_i+b_i-2)=n-1-(2r+1)$$
occurrences of $123$ in all in $\pi'$.  Therefore, upon combining cases, we have $\#123(\pi')=n-1-\text{run}(\pi)$ for all $\pi \in A_n$, where $\text{run}(\pi)$ denotes the number of runs of letters in $\pi$.

Note that there are $\binom{n-2}{\ell-1}$ members $\pi \in A_n$ with $\text{run}(\pi)=\ell$, and hence the same number of $\rho \in \mathcal{F}_n(11)$ with $\#123(\rho)=n-\ell-1$ for each $\ell \in [n-1]$. This implies
$$|\mathcal{G}_n^*(123)|=\sum_{\ell=1}^{n-2}\binom{n-2}{\ell-1}(n-\ell-1)=(n-2)\sum_{\ell=1}^{n-2}\binom{n-3}{\ell-1}=(n-2)2^{n-3}, \quad n \geq 3.$$
By \eqref{Gntau}, we then have
$$\text{tot}_n(123)=\sum_{j=3}^n(j-2)\binom{n-2}{j-2}2^{j-3}=(n-2)\sum_{j=3}^n\binom{n-3}{j-3}2^{j-3}=(n-2)3^{n-3},$$
which completes the proof. \hfill \qed \medskip

\noindent\emph{Proofs of formulas for $f_n(\tau)$ for $\tau\in\{121,123,212,312\}$:} \medskip

\noindent{\bf $f_n(121)$:} Given $j \in [n]$, $0 \leq p \leq \lfloor(j-1)/2\rfloor$ and $0 \leq r \leq p$, let $A_j^{(p,r)}$ denote the subset of $A_j$ whose members contain $p$ runs of 1 with $r$ of these runs of length one, where $A_j$ is as in the proof of Lemma \ref{11avoidlem}.  That is, $\alpha \in A_j^{(p,r)}$ implies $\alpha=0^{a_1}1^{b_1}\cdots0^{a_p}1^{b_p}0^{a_{p+1}}$ wherein $a_{p+1}+\sum_{i=1}^{p}(a_i+b_i)=j-1$, each exponent $a_i,b_i\geq1$ for $i \in [p]$, with $a_{p+1}\geq0$, and $b_i=1$ for exactly $r$ indices $i \in [p]$.
Thus, members of $A_j^{(p,r)}$ may be identified as integral sequences $(x_1,x_2,\ldots,x_{2p+1})$ summing to $j-1$ such that $x_i \geq 1$ for all $i\in[2p]$ with $x_{2p+1}\geq0$ and exactly $r$ of the components $x_{2j}$ for $1 \leq j \leq p$ equal to one.  We first select these $r$ components $x_{2j}$ in one of $\binom{p}{r}$ ways.  Then there are $\binom{j-3p+r-1+(2p-r)}{2p-r}=\binom{j-p-1}{2p-r}$ possibilities for the remaining $2p-r+1$ components, as they are seen to be synonymous with non-negative integral solutions to
$$y_1+y_2+\cdots+y_{2p-r+1}=j-1-2p-(p-r)=j-3p+r-1,$$ where the extra subtracted term $p-r$ accounts for the $p-r$ unselected components $x_{2j}$ for $j \in [p]$ each of which must be at least two.

We thus have $|A_j^{(p,r)}|=\binom{p}{r}\binom{j-p-1}{2p-r}$, with members of $A_j^{(p,r)}$ corresponding under the mapping $\pi \mapsto \pi'$ to the subset $S_j^{(p,r)}$ of $\mathcal{F}_j(11)$ whose members contain $p$ descents and $r$ occurrences of 121.  In forming $\pi \in \mathcal{F}_n(121)$ from a skeleton $\rho \in \mathcal{F}_j(11)$ by adding $n-j$ letters as described previously, one must add at least one duplicate to follow any letter playing the role of the `2' within an occurrence of 121 in $\rho$.  Note that if $\pi$ were to contain an occurrence of 121, then so must its skeleton, and hence destroying each occurrence of 121 of $\rho$ by duplicating the middle letter ensures $\pi$ avoids 121.  Let $\text{skel}(\pi)$ denote the skeleton of $\pi \in \mathcal{F}_n$.  For each $\rho \in S_j^{(p,r)}$, there are thus $\binom{(n-j-r)+j-1}{j-1}=\binom{n-r-1}{j-1}$ possible $\pi \in \mathcal{F}_n(121)$ for which $\text{skel}(\pi)=\rho$, since $r$ of the $n-j$ letters to be added have locations which are specified.  Considering all possible $j$, $p$ and $r$ then implies the formula for $f_n(121)$ stated in Corollary \ref{Cxythc2}. \medskip

\noindent{\bf $f_n(123)$:} In the proof given above for $\text{tot}_n(123)$, we saw that there were $\binom{j-2}{\ell-1}$ members $\rho \in \mathcal{F}_j(11)$ with $\#123(\rho)=j-\ell-1$ for each $\ell \in [j-1]$, where $j \geq 2$. Note that in forming $\pi \in \mathcal{F}_n(123)$ from such a skeleton $\rho$,  each of the $j-\ell-1$ letters that play the role of `2' in an occurrence of 123 within $\rho$ must have at least one duplicate letter added so as to directly follow it in order to ensure that $\pi$ avoids 123.  Thus, for each such $\rho$, there are $\binom{n-j-(j-\ell-1)+j-1}{j-1}=\binom{n-j+\ell}{j-1}$ possible $\pi$ for which $\text{skel}(\pi)=\rho$.  Considering all $j$ and $\ell$ gives
$$f_n(123)=1+\sum_{j=2}^n\sum_{\ell=1}^{j-1}\binom{j-2}{\ell-1}\binom{n-j+\ell}{j-1}, \qquad n \geq 1,$$
as desired, where the initial $+1$ accounts for the member of $\mathcal{F}_n(123)$ consisting of all $1$'s. \medskip

\noindent{\bf $f_n(212)$ and $f_n(312)$:} We first determine $f_n(212)$.  Given $j \geq 2$, $1 \leq p \leq \lfloor j/2 \rfloor$ and $0 \leq r \leq p-1$, let $A_j(p,r)$ denote the subset of $A_j$ whose members contain $p$ runs of $0$ and $p-r-1$ non-terminal runs of 1 of length one.  That is, $\pi \in A_j(p,r)$ implies it can be expressed as $\pi=0^{a_1}1^{b_1}\cdots0^{a_p}1^{b_p}$ such that $\sum_{i=1}^p(a_i+b_i)=j-1$ where each $a_i,b_i\geq 1$, with the possible exception of $b_p$, which could be zero as well, and $b_i=1$ for exactly $p-r-1$ indices $i \in [p-1]$.  Once these $p-r-1$ indices have been specified (in one of $\binom{p-1}{r}$ ways), then the sequence of remaining exponents $a_i,b_i$ may be viewed as a non-negative integral solution to
$$y_1+y_2+\cdots+y_{p+r+1}=j-1-(p-r-1)-2r-p=j-2p-r,$$
where the subtracted $p-r-1$ accounts for the indices $i\in[p-1]$ such that $b_i=1$, the subtracted $2r$ for the $r$ indices $i<p$ such that $b_i\geq 2$ and the subtracted $p$ for the $a_i\geq 1$ where $i \in [p]$.  Thus, there are $\binom{(j-2p-r)+p+r}{p+r}=\binom{j-p}{p+r}$ possibilities concerning the remaining $p+r+1$ exponents once the $p-r-1$ indices $i<p$ for which $b_i=1$ have been selected, and hence
$|A_j(p,r)|=\binom{p-1}{r}\binom{j-p}{p+r}$.

Note that members of $A_j(p,r)$ correspond to the subset $T_j^{(p,r)}$ of $\mathcal{F}_j(11)$ whose members contain $p-r-1$ occurrences of 212 and $r$ occurrences of 312.  Let $\rho \in T_j^{(p,r)}$ and we wish enumerate $\pi \in \mathcal{F}_n(212)$ such that $\text{skel}(\pi)=\rho$.  In order to ensure that $\pi$ avoids 212 when forming $\pi$ from $\rho$, we must insert at least one copy to directly follow each of the $p-r-1$ letters playing the role of `1' in an occurrence of 212 within $\rho$.  Thus, there are $\binom{n-j-(p-r-1)+j-1}{j-1}=\binom{n-p+r}{j-1}$ members $\pi \in \mathcal{F}_n(212)$ such that $\text{skel}(\pi)=\rho$ for each $\rho \in T_j^{(p,r)}$.  Considering all possible $j$, $p$ and $r$ yields the desired formula
$$f_n(212)=1+\sum_{j=2}^n\sum_{p=1}^{\lfloor j/2\rfloor}\sum_{r=0}^{p-1}\binom{p-1}{r}\binom{j-p}{p+r}\binom{n-p+r}{j-1}, \qquad n \geq 1,$$
where the initial $+1$ accounts for the all $1$'s member of $\mathcal{F}_n(212)$.  The same basic proof applies to finding $f_n(312)$ except now there are $\binom{n-r-1}{j-1}$ members $\pi \in \mathcal{F}_n(312)$ such that $\text{skel}(\pi)=\rho$ for each $\rho \in T_j^{(p,r)}$.  \hfill \qed \medskip

\noindent\emph{Proofs of the equivalences $112\approx122$ and $211\approx221$:} \medskip

We decompose $\pi \in \mathcal{F}_n$ into its increasing blocks as $\pi=I_1\cdots I_\ell$ for some $\ell \geq 1$.  Suppose $I_j=a_1^{r_1}a_2^{r_2}\cdots a_k^{r_k}$ where $j \in [\ell]$, with $a_{i+1}=a_i+1$ for each $1\leq i \leq k-1$ and $r_i\geq 1$ for all $i$. Define $\widetilde{I}_j=a_1^{r_k}a_2^{r_{k-1}}\cdots a_k^{r_1}$, i.e., we reverse the sequence of multiplicities of the letters  in $I_j$ to obtain $\widetilde{I}_j$.  One may verify that an occurrence of 112 in $I_j$ between $a_i$ and $a_{i+1}$ for some $i \in [k-1]$ corresponds to an occurrence of 122 between $a_{k-i}$ and $a_{k-i+1}$ in $\widetilde{I}_j$, and vice versa. Let $\widetilde{\pi}=\widetilde{I}_1\cdots\widetilde{I}_\ell$.  As the number of $112$'s in $I_j$ equals the number of $122$'s in $\widetilde{I}_j$ for each $j \in [\ell]$, we have $\#112(\pi)=\#122(\widetilde{\pi})$ for all $\pi \in \mathcal{F}_n$.  Since $\pi \mapsto \widetilde{\pi}$ is a bijection on $\mathcal{F}_n$, the first equivalence is established.

For the second, consider a (maximal) sequence of the form $a^xb^y$ within $\rho \in \mathcal{F}_n$, where $a>b$ and $x,y \geq 1$.  Then replace with $a^yb^x$ and repeat for each such sequence to obtain $\widehat{\rho}$.  Since $\rho$ being a member of $\mathcal{F}_n$ cannot contain two consecutive descents, such sequences $a^xb^y$ within $\rho$ are mutually disjoint from one another. Hence, the mapping $\rho \mapsto \widehat{\rho}$ is a bijection.  Moreover, each occurrence of 211 in $\rho$ is seen to give rise to a distinct occurrence of 221 in $\widehat{\rho}$, and conversely.  This implies $\#211(\rho)=\#221(\widehat{\rho})$ for all $\rho$, which establishes the second equivalence and completes the proof.  Furthermore, the bijections $\pi \mapsto \widetilde{\pi}$ and $\rho \mapsto \widehat{\rho}$ demonstrate that the joint distributions of 112/122 and of 211/221 on $\mathcal{F}_n$ are both symmetric. \hfill \qed

\end{document}